\newtheorem{theorem}{Theorem}[section]
\newtheorem{lemma}[theorem]{Lemma}
\theoremstyle{definition}
\theoremstyle{remark}
\newtheorem{assumption}{Assumption}
\newcommand{\preprintlb}{}
\newcommand{\eqa}{\begin{eqnarray}}
\newcommand{\ena}{\end{eqnarray}}
\newcommand{\eq}{\begin{equation}}
\newcommand{\en}{\end{equation}}
\newcommand{\eqs}{\begin{eqnarray*}}
\newcommand{\ens}{\end{eqnarray*}}
\newcommand{\mathd}{ \mathrm{d} }
\newcommand{\ind}{\mathds{1}}
\newcommand{\bigO}{\mathcal{O}}
\newcommand{\Rad}{\mathrm{Rad}}
\newcommand{\bv}[1]{\boldsymbol{#1}}
\newcommand{\cp}{\overset{\mathbb{P}}{\to}}
\def\ignore#1{}
\begin{document}

\begin{frontmatter}
\title{Fast approximate simulation of finite long-range spin systems}
\runtitle{Fast simulation of spin systems}

\begin{aug}
\author{\fnms{Ross} \snm{McVinish}\thanksref{t2}\ead[label=e2]{r.mcvinish@uq.edu.au}}
\and
\author{\fnms{Liam} \snm{Hodgkinson}\thanksref{t1}\thanksref{t2}\ead[label=e1]{liam.hodgkinson@uqconnect.edu.au}}

\thankstext{t1}{Supported by an Australian Postgraduate Award.}
\thankstext{t2}{All authors are supported in part by the Australian Research Council (Discovery Grant DP150101459 and the ARC Centre of Excellence for 
Mathematical and Statistical Frontiers, CE140100049).}

\runauthor{R. McVinish, L. Hodgkinson}

\affiliation{University of Queensland}

\address{
School of Mathematics and Physics\\
University of Queensland\\
St. Lucia, Brisbane \\
Queensland, 4072\\
Australia \\
\printead{e2}\\
\phantom{E-mail:\ }
\printead*{e1}
}

\end{aug}

\begin{abstract}
Tau leaping is a popular method for performing fast approximate simulation of certain continuous time Markov chain models typically found in chemistry and biochemistry. This method is known to perform well when the transition rates satisfy some form of scaling behaviour. In a similar spirit to tau leaping, we propose a new method for approximate simulation of spin systems which approximates the evolution of spin at each site between sampling epochs as an independent two-state Markov chain. When combined with fast summation methods, our method offers considerable improvement in speed over the standard Doob-Gillespie algorithm. We provide a detailed analysis of the error incurred for both the number of sites incorrectly labelled and for linear functions of the state. 
\end{abstract}

\begin{keyword}[class=MSC]
%\kwd{60J10, 60F05, 60F25, 92D30, 92D40}
\kwd[Primary ]{60H35}
\kwd[; secondary ]{60K35, 65C99}
\end{keyword}

\begin{keyword}
\kwd{Tau-leaping, simulation, spin system, error analysis, rate of convergence, mean-field models}
\end{keyword}

\end{frontmatter}

\maketitle

\section{Introduction}
\setcounter{equation}{0}

Finite spin systems are continuous-time Markov \preprintlb chains on $ \{0,1\}^{S} $, where $ S $ is a finite collection of sites, and the value at only one site changes in each transition. Special cases include the contact process, voter model and Ising model.  While great advances have been made in the analysis of these systems in relation to qualitative issues such as phase transitions, rates of convergence, and central limit theorems \cite{Liggett:1999,Liggett:2005,Liggett:2010}, to address more quantitative questions it is often necessary to employ simulation.  As a finite-state continuous-time Markov chain, sample paths can be constructed from the jump chain and holding time distribution based on the work of Doob \cite{Doob:1942,Doob:1945}. %An early implementation of this method is described in \cite{Kendall:1950}. 
In the chemistry and biochemistry literature this algorithm is often referred to as the \emph{Gillespie algorithm} following \cite{Gillespie:1976,Gillespie:1977}; in the sequel, shall be referred to as the \emph{Doob--Gillespie algorithm}.

When the number of sites is large, exact simulation over a desired time interval can be infeasible due to prohibitively small expected times between transitions. This issue is not particular to spin systems and arises more generally in the simulation of continuous time Markov chains modelling large populations. Gillespie \cite{Gillespie:2001} proposed the \emph{tau-leaping method} as a means of generating an approximate sample path that avoids this problem. Tau-leaping essentially treats the transition rates as constant over small time intervals. Anderson et al. \cite{AGK:2011} provide a detailed analysis of resulting error in the sample paths. In principle, tau-leaping could be applied to spin systems, however they lack the scaling required for tau-leaping to provide a reasonable speed-accuracy trade off.

In this paper we propose a new algorithm for the approximate simulation of long-range finite spin systems. The basic idea here is to briefly decouple sites over small time intervals so that the spin system is treated as a collection of independent two-state Markov chains. We provide a detailed analysis of the error in terms of the number of sites incorrectly labelled and accuracy of certain linear functionals of the state. Our analysis is inspired by \cite{AGK:2011}, though we keep our analysis entirely non-asymptotic.

\subsection{The basic model} 

We formulate the class of \emph{finite spin systems} as continuous-time Markov processes on the state space $\{0,1\}^n$ for some positive integer $n$ representing the total number of sites, where simultaneous transitions at multiple distinct sites occurs with zero probability. Any finite spin system $ (\eta(t), t \geq 0) $ can be represented as a Markov jump process in the usual transition notation:
\[
\begin{aligned}
\eta_i:\quad 0\to 1 &\quad \mbox{at rate }q_i^{+}(\eta)\\
1\to 0 &\quad \mbox{at rate }q_i^{-}(\eta)
\end{aligned} \qquad \mbox{for }i=1,\dots,n.
\]
Here, the functions $q_i^+,q_i^- : \{0,1\}^n \to [0,\infty)$ represent the rate at which sites flip to the positive state (1), and the zero state (0), respectively. The function $q(\eta,i) = \eta_i q_i^{-}(\eta) + (1-\eta_i)q_i^{+}(\eta)$ is often called the \emph{rate function} of the process \cite{Liggett:2005}. Let $N_i^+$ and $N_i^-$ for each $i=1,\dots,n$ be independent unit-rate Poisson processes. Representing the finite spin system $\eta$ as a random time change of Poisson processes \cite[\S6]{EthierKurtz:2005}, we find that for any $t \geq 0$,
\begin{multline}
\label{model:eq4}
\eta_i(t) = \eta_i(0) + N_i^{+}\left(\int_0^t (1-\eta_i(s))q_i^{+}(\eta(s)) \mathd s\right) \\- N_i^{-}\left(\int_0^t \eta_i(s) q_i^{-}(\eta(s)) \mathd s\right),\quad i=1,\dots,n.
\end{multline}

The focus in this paper is on a predominant subclass of practical spin systems whose $ q_{i}^{+} $ and $ q_{i}^{-} $ functions are of the general form
\begin{equation}
f_{i}(v_i),\quad\mbox{where}\quad v_i = \sum_{j=1}^{n}s_{ij}x_{j}, \label{model:eq7}
\end{equation}
for some functions $f_{i}:\mathbb{R}\to\mathbb{R}_{+}$ and constants $s_{ij}\in\mathbb{R}$ such that the sums $ \sum_{j=1}^{n} s_{ij} $ are uniformly bounded independently of $n$. As a model of mean-field type, $\{v_i\}_{i=1}^n$ are commonly referred to as \emph{potentials}. Transition rate functions of this form are common in applications such as Hanski's metapopulation model \cite{Hanski:1994,AlonsoMcKane:2002}, spatial SIS epidemic model \cite{BTK:2015,HamadaTakasu:2019}, the voter model \cite[\S5]{Liggett:2005} and the Ising model with Kac potentials \cite{DeMasi:2001}. They are often considered for their capacity to interact with a very large number of other sites (indeed, the entire system!). The form of (\ref{model:eq7}) suggests that it will be important to understand the evolution of linear functionals of the state. Under certain conditions, the spin system is well approximated by a deterministic system $\rho(t) = (\rho_i(t))_{i=1}^n$ in the sense that $ \sup_{\phi \in \Phi} \left| \sum_{i=1}^n \phi_{i} (\eta_{i}(t) - \rho_{i}(t)) \right| $ is small for sufficiently regular $ \Phi \subset \mathbb{R}^{n} $ \cite{BMP:15}. In particular, this deterministic system is given by the solution to
\begin{equation}
\rho_i(t) = \rho_i(0) + \int_0^t (1-\rho_i(s))q_i^+(\rho(s)) -\rho_i(s)q_i^{-}(\rho(s)) \mathd s,\quad \mbox{for all }t \geq 0, \label{ODE:eq1}
\end{equation}
assuming that $q_{i}^+ $ and $ q_{i}^-$ can be smoothly extended to $[0,1]^n$ (see Appendix \ref{sec:IndepSite}). 

\subsection{Approximate simulation} As noted earlier, finite spin systems can in principle be simulated exactly using the Doob-Gillespie algorithm, though this may be computationally infeasible. Assuming that for each $ \eta \in \{0,1\}^{n} $, $\sum_{i=1}^{n} q(\eta,i) = \mathcal{O}(n) $, the expected number of transitions in a sample path on $ [0,T] $ is then $ \mathcal{O}(nT)  $. If each of the transition rates can be updated after a transition in constant time, as is the case for transition rate functions above by storing and updating the potentials, then the computational cost of simulating this sample path using the Doob-Gillespie algorithm is $ \mathcal{O}(n^{2} T) $. 

Basic tau leaping is ill-suited to simulating spin systems. Fixing some step size $\delta > 0$, let $t_k \coloneqq k \delta$ denote the lattice of points over which the simulation is carried out. For notational convenience, we define $\chi$ as the simple function
\[
\chi(s) \coloneqq \sum_{k=0}^{\infty} t_k \ind_{s \in [t_k, t_{k+1})}.
\]
that maps each time point $t$ to the last point on the lattice. The process of Euler tau-leaping translates to holding the transition rates constant over each interval $[t_k,t_{k+1})$. With this in mind, the Euler tau-leaping approximation $\hat{\eta}^{\delta}$ to (\ref{model:eq4}) satisfies
\begin{multline}
\hat{\eta}^{\delta}_i(t) = \eta_i(0) + N_i^+\left(\int_0^t (1-\hat{\eta}_i^{\delta}\circ \chi(s)) \, q_i^+(\hat{\eta}^{\delta} \circ \chi(s)) \mathd s \right) \\
- N_i^-\left(\int_0^t \hat{\eta}_i^{\delta}\circ \chi(s) \, q_i^{-}(\hat{\eta}^{\delta} \circ \chi(s))\mathd s \right),\qquad i=1,\dots,n.
\end{multline}
At the lattice points, $ \hat{\eta}^{\delta} $ can be evaluated recursively via
\[
\hat{\eta}^{\delta}_{i}(t_{k+1}) = \hat{\eta}^{\delta}_{i}(t_{k}) + \Lambda_{i,k}^{+} - \Lambda_{i,k}^{-},
\]
where $ \Lambda_{i,k}^{+} $ and $ \Lambda_{i,k}^{-} $ are independent Poisson random variables with means $ \delta\hat{\eta}^{\delta}_{i} q^{+}_{i}(\hat{\eta}^{\delta} (t_{k})) $ and $\delta (1-\hat{\eta}^{\delta}_{i}) q^{-}_{i}(\hat{\eta}^{\delta} (t_{k})) $, respectively. For the path of $ \hat{\eta}^{\delta} $ to remain in $ \{0,1\}^{n} $ it is necessary that the Poisson random variables $ \Lambda_{i,k}^{+} $ and $ \Lambda_{i,k}^{-} $ are either 0 or 1 for all $ i $ and $ k $. For this to occur with high probability it is necessary that the step size is $ \mathcal{O}(n^{-1}) $. The need to take such a small step size removes any benefit to performing tau-leaping. 

Motivated by the {\em propagation of chaos} results that often hold for this type of system \citep{Leonard:90,Sznitman:91,BarbourLuczak:15}, our strategy is to decouple the sites over each time interval $[t_k,t_{k+1})$ so that the process becomes a system of $n$ independent two-state Markov chains whose transition probabilities are known explicitly. This process never leaves the set $ \{0,1\}^{n}$ so avoids the issue described above. However, the quality of the approximation will depend on how well $ q_{i}^{+}(\eta(t)) $ and $ q_{i}^{-}(\eta(t)) $ can be approximated over $[t_{k},t_{k+1}] $ by suitably chosen constants. The simplest way of incorporating this idea is via a forward Euler scheme, approximating  $ q_{i}^{+}(\eta(t)) $ and $ q_{i}^{-}(\eta(t)) $  by their value at the most recent point on the lattice. With this in mind, the Euler approximation $\eta^{\delta}$ to (\ref{model:eq4}) satisfies
\begin{multline}
\label{model:eq5}
\eta^{\delta}_i(t) = \eta_i(0) + N_i^+\left(\int_0^t (1-\eta_i^{\delta}(s))q_i^+(\eta_i^{\delta} \circ \chi(s)) \mathd s \right) \\
- N_i^-\left(\int_0^t \eta_i^{\delta}(s)q_i^{-}(\eta_i^{\delta} \circ \chi(s))\mathd s \right),\qquad i=1,\dots,n,
\end{multline} 
The resulting two-state Markov chains on $ [t_{k}, t_{k+1}] $ can be simulated using the Doob-Gillespie algorithm . Even more convenient is the fact that the transition probabilities of two-state Markov jump processes are known explicitly, and so it is possible to simulate the process at each time point $t_k$ as a discrete-time Markov chain. Indeed, by letting $Q_i(x) = q_i^+(x) + q_i^-(x)$
for each $i=1,\dots,n$, assuming $Q_i(x) \neq 0$, $\mathbb{P}(\eta_i^{\delta}(t_{k+1}) = 1\vert \eta^{\delta}(t_k)) = P_i^{\delta}(\eta^{\delta}(t_k))$, where
\begin{equation}
\label{eq:EulerTransitions}
P_i^{\delta}(\eta^{\delta}(t_k)) \coloneqq
\begin{cases}
\frac{q_i^+(\eta^{\delta}(t_k))}{Q_i(\eta^{\delta}(t_k))}(1 - e^{-\delta Q_i(\eta^{\delta}(t_k))}) & \mbox{ if } \eta_i^{\delta}(t_k) = 0 \\
1 - \frac{q_i^-(\eta^{\delta}(t_k))}{Q_i(\eta^{\delta}(t_k))}(1 - e^{-\delta Q_i(\eta^{\delta}(t_k))}) & \mbox{ if } \eta_i^{\delta}(t_k) = 1.
\end{cases}
\end{equation}
Pseudocode for simulating the Euler approximation on the lattice $\{k\delta\}_{k=0}^{N}$ is provided in Algorithm \ref{alg:EulerApprox}.

\begin{algorithm}[htbp]
	\SetAlgoLined
	\SetKwInOut{Input}{Input}\SetKwInOut{Output}{Output}
	\Input{Number of sites $n$, transition rates $q_i^+$, $q_i^-$ for $i=1,\dots,n$, initial value $\eta^{\delta}(0) \in \{0,1\}^n$, time step $\delta > 0$, and number of time steps $N$ }
    \Output{System state at each time point $\{\eta^{\delta}(\delta),\eta^{\delta}(2\delta),\dots,\eta^{\delta}(N\delta)\}$} ~ \\
	\For{$k=0,\ldots,N-1$}{
	\For{$i=1,\dots,n$}{
	Sample $\eta_i^{\delta}((k+1)\delta)$ as a Bernoulli random variable with success probability $P_i^{\delta}(\eta^{\delta}(k\delta))$. 
	}
	}
	\caption{Simulating the Euler approximation (\ref{model:eq5}) for spin systems}
	\label{alg:EulerApprox}
\end{algorithm}

To improve upon the Euler method, a midpoint approximation to $ q_{i}^{+}(\eta(s)) $ and $ q_{i}^{-}(\eta(s)) $ will also be considered. Analogous to the deterministic approximation (\ref{ODE:eq1}), this requires extending $ q_{i}^{+} $ and $ q_{i}^{-} $ smoothly to $ [0,1]^{n} $. This can be done in an arbitrary manner, but the choice of extension will play a role in the quality of the approximation, and consequently, in the error analysis to follow. Letting $p_i^{\delta}(z) \coloneqq z_i + \frac12 \delta q_i(z)$ for $z \in [0,1]^n$ and each $i=1,\dots,n$, and assuming that $\delta$ is sufficiently small so that $p_i^{\delta}$ maps $[0,1]^n$ into itself, the midpoint approximation $\mathring{\eta}^{\delta}$ satisfies
\begin{multline}
\label{model:eq6}
\mathring{\eta}^{\delta}_i(t) = \eta_i(0) + N_i^+\left(\int_0^t (1-\mathring{\eta}_i^{\delta}(s))q_i^+(p^{\delta} \circ \mathring{\eta}^{\delta} \circ \chi(s)) \mathd s \right) \\
- N_i^-\left(\int_0^t \mathring{\eta}_i^{\delta}(s)q_i^{-}(p^{\delta} \circ \mathring{\eta}^{\delta} \circ \chi(s))\mathd s \right),\qquad i=1,\dots,n.
\end{multline}
This time, the transition probabilities are given by
\begin{equation}
\label{eq:MidpointTransitions}
\mathbb{P}(\mathring{\eta}_i^{\delta}(t_{k+1}) = 1 \vert \mathring{\eta}_i^{\delta}(t_k)) = P_i^{\delta}(p^{\delta} \circ \mathring{\eta}^{\delta}(t_k)).
\end{equation}
Pseudocode for simulating the midpoint approximation on the lattice $\{k\delta\}_{k=0}^{N}$ is provided in Algorithm \ref{alg:MidpointApprox}.

\begin{algorithm}[htbp]
	\SetAlgoLined
	\SetKwInOut{Input}{Input}\SetKwInOut{Output}{Output}
	\Input{Number of sites $n$, transition rates $q_i^+$, $q_i^-$ for $i=1,\dots,n$, initial value $\mathring{\eta}^{\delta}(0) \in \{0,1\}^n$, time step $\delta > 0$, and number of time steps $N$ }
    \Output{System state at each time point $\{\mathring{\eta}^{\delta}(\delta),\mathring{\eta}^{\delta}(2\delta),\dots,\mathring{\eta}^{\delta}(N\delta)\}$} ~ \\
	\For{$k=0,\ldots,N-1$}{
	\For{$i=1,\dots,n$}{
	Compute $\tilde{\eta}_i^{\delta}(k\delta) = p_i^{\delta}(\mathring{\eta}^{\delta}(k\delta))$.
	}
	\For{$i=1,\dots,n$}{
	Sample $\mathring{\eta}^{\delta}((k+1)\delta)$ as a Bernoulli random variable with success probability $P_i^{\delta}(\tilde{\eta}^{\delta}(k\delta))$. 
	}
	}
	\caption{Simulating the midpoint approximation (\ref{model:eq6}) for spin systems}
	\label{alg:MidpointApprox}
\end{algorithm}

Practically speaking, a single step of Algorithm \ref{alg:MidpointApprox} is not much more challenging to execute than Algorithm \ref{alg:EulerApprox}, only doubling the computation involved. However, as we shall see, the midpoint approximation can be substantially more accurate than the Euler approximation allowing a much larger step size to be taken.

\subsection{Fast summation methods}

Algorithms \ref{alg:EulerApprox} and \ref{alg:MidpointApprox} will only be useful if they can be implemented with significantly less computational cost than the Doob-Gillepie algorithm applied to the original process (\ref{model:eq4}). When the functions $ q_{i}^{+} $ and $ q_{i}^{-} $ have the form (\ref{model:eq7}), naive evaluation of the $ 2n $ transition rates requires  $\bigO(n^2)$ operations. In Algorithms \ref{alg:EulerApprox} and \ref{alg:MidpointApprox} they must be recomputed at every time-step, implying an $\bigO(n^2 T/\delta)$ computational burden. On the other hand the computation burden of the Doob-Gillespie algorithm is $ \bigO(n^{2} T) $. Therefore, naively implemented, these algorithms are more costly to implement than the Doob-Gillespie algorithm. For Algorithms \ref{alg:EulerApprox} and \ref{alg:MidpointApprox} to offer computational savings we need methods which can evaluate $ n $ sums of the form (\ref{model:eq7})  in  sub-quadratic time. 

In the special case where the sites are indexed on a $ d$-dimensional regular lattice and $ s_{ij} = k(i-j) $ for some kernel function $ k $, the sums in (\ref{model:eq7}) become convolutions which may be rapidly computed exactly in $ \bigO(n\log n) $ time using the fast Fourier transform. This approach is simple and well-known, yet remains effective and highly recommended whenever applicable, thanks to the accessibility of highly efficient implementations of the fast Fourier transform \cite{frigo1998fftw}. In fact, our numerical experiments will focus predominantly on this particular method.

The state-of-the-art methods in this arena are the class of tree methods and their siblings. When the number of spatial dimensions $ d $ is not too large, these comprise perhaps the fastest summation methods to date \cite{Gray2001}. The simpler, single-tree case involves the construction of a tree which groups the $ n $ sites together in $ \bigO(\log n) $ levels, in $ \bigO (n\log n) $ time. Usually this is performed spatially with respect to some metric (usually Euclidean). By utilising some form of approximation (either spatial averages or multipole expansions), information necessary to estimate potentials can be contained in each group at each level of the tree. For each source site $z_i$, the corresponding potential $v_i$ can be obtained by passing through each level of the tree. The number of steps required to estimate the potential of a single node becomes $\bigO(\log n)$, for a total $\bigO(n \log n)$ computation time. Algorithms of this form include the celebrated Barnes-Hut algorithm \cite{Barnes1986} commonly used for simulations of the $n$-body problem.

A more sophisticated, and often much more efficient, approach, is to group source sites according
to a tree as well. Such methods are referred to as dual–tree methods, and have linear total computation complexity, for a fixed tolerance $\epsilon$ \cite{Gray2001}. The most noteworthy dual–tree method is the fast multipole method of
Greengard and Rokhlin \cite{Greengard1987} (including its application to the Gaussian kernel, called the
fast Gauss transform \cite{Greengard1991,Yang2003}). Provided the bandwidth of the kernel is not too small, computation
of the potentials can be achieved in $\bigO(n)$ time. On the other hand, if the bandwidth is too small, the
fast multipole method can prove even less efficient than the naive approach.

\subsection{Paper outline} The remainder of the paper is organised as follows: first, in \S\ref{sec:StrongError}, we develop strong error bounds for the approximations (\ref{model:eq5}) and (\ref{model:eq6}), controlling the expected number of sites with incorrect spin in terms of the step size and norms on the derivatives of $ q $. In \S\ref{sec:ExactError}, we study the renormalised differences of linear combinations of the state vectors for the Euler method. Unlike the approach seen in \cite{AGK:2011}, ours will be purely non-asymptotic, deriving an explicit error bound on the rate of convergence as well. Together with the strong error bounds, this analysis shows that the mid-point method is substantially more accurate than the Euler method when the system is not in equilibrium. Finally, in \S\ref{sec:Numerics}, we empirically demonstrate the accuracy of the approximation and the reduced computational cost of the proposed simulation method. All proofs have been relegated to Appendix \ref{sec:Proofs}.

\section{Strong error analysis}
\label{sec:StrongError}
Our analysis begins with the development of strong error bounds for the Euler and
midpoint methods. These arguments adhere fairly closely to those of 
\cite{AGK:2011}, but must take greater care to keep track of terms
depending on $n$, especially those involving derivatives of $q^+$ and $q^-$.
For this purpose, it is necessary to introduce some notation.
For a function $f : [0,1]^{n} \rightarrow \mathbb{R}^{m}$, let $ \|f\|_{\infty} = \max_{i} \sup_{x} |f_{i}(x)|$. %Let $Df$ denote the Jacobian matrix of $f$ and let $D^{\ast} f$ denote the same Jacobian matrix without the diagonal. 
Define
\begin{align*}
\|Df\|_{1} & \coloneqq \max_{j} \sum_{i} \|\partial_{j} f_{i} \|_{\infty}, & \|D^{\ast}f\|_{1} & \coloneqq \max_{j} \sum_{i\neq j} \|\partial_{j} f_{i} \|_{\infty}, \\
\|Df\|_{\infty} &\coloneqq \max_{i} \sum_{j} \|\partial_{j} f_{i} \|_{\infty}, & \|D^{\ast}f\|_{\infty} & \coloneqq \max_{i} \sum_{j\neq i} \|\partial_{j} f_{i} \|_{\infty}.
\end{align*}
Since $ q_{i}(x) = q_{i}^+(x) $ if $ x_{i} = 0 $ and $ q_{i}(x) = q_{i}^-(x) $ if $ x_{i} = 1$, it follows that $ \|q^+\|_{\infty} \leq \|q\|_{\infty} $ and $ \|q^-\|_{\infty} \leq \|q\|_{\infty} $. 
Furthermore, by construction, $ \partial_{i} q^+_{i}(x) = \partial_{i} q^-_{i} (x) = 0 $ and, for $ j\neq i $, $ \partial_{j} q_{i}(x) = (1-x_{i}) \partial_{j} q^+_{i}(x) - x_{i} \partial_{j} q^-_{i}(x) $, Therefore, if $p = 1$ or $p = \infty$, $\|Dq^+\|_{p} \leq \|D^{\ast} q\|_{p}$ and $\|Dq^-\|_{p} \leq \|D^{\ast} q\|_{p}$. 

%\begin{theorem}
%\label{thm:DetBound}
%Let $X(t)$ and $p(t)$ satisfy (\ref{model:eq4}) and (\ref{ODE:eq1}) respectively. Then for any
%$T > 0$ and $\phi \in \mathbb{R}^n$,
%\[
%\mathbb{E} \sup_{t \in [0,T]} \left| \frac{1}{n} \sum_{i=1}^n \phi_i (\eta_i(t) - p_i(t))\right| \leq \|\phi\|_{\infty} \left[\frac{1}{2 \sqrt{n}} + 2(\|D^{\ast}Q\|_{2,1} + \|D^2 Q\|_{1,1}) T e^{T \|D^{\ast} Q\|_1}\right].
%\]
%\end{theorem}
%\begin{rem}
%The bound in Theorem \ref{thm:DetBound} can be readily extended to a supremum over
%a bounded set of vectors $\Phi \subset \mathbb{R}^n$, by similar arguments to...
%\end{rem}

With this in tow, the strong error analysis for the Euler method is stated in Theorem \ref{thm:EulerRate}. Note that the only assumption required here is that $q \in \mathcal{C}^1([0,1]^n,\mathbb{R}_+^n)$. 
\begin{theorem}
\label{thm:EulerRate}
For some step-size $\delta > 0$, let $\eta(t)$ and $\eta^{\delta}(t)$ satisfy (\ref{model:eq4}) and the Euler approximation (\ref{model:eq5}) respectively.
Then for any time $T >  0$,
\[
\sup_{t \in [0,T]} \sum_{i=1}^n \mathbb{E}|\eta_i(t) - \eta_i^{\delta}(t)| \leq 4 n \delta T \|q\|_{\infty} \|D^{\ast}q\|_1 e^{2T(\|q\|_{\infty}+\|D^{\ast}q\|_1)}.
\]
\end{theorem}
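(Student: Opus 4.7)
The plan mirrors the strong-error program of \cite{AGK:2011}, but with careful bookkeeping of the $n$-dependence of all constants. First, I would couple $\eta$ from (\ref{model:eq4}) to $\eta^{\delta}$ from (\ref{model:eq5}) by using the same unit-rate Poisson processes $N_i^{\pm}$ in both representations. Setting $A_i(t) = \int_0^t (1-\eta_i(s)) q_i^+(\eta(s))\,\mathrm{d}s$ and $A_i^{\delta}(t) = \int_0^t (1-\eta_i^{\delta}(s)) q_i^+(\eta^{\delta}\circ\chi(s))\,\mathrm{d}s$, with $B_i, B_i^{\delta}$ defined analogously for the $q^-$ clocks, the difference $\eta_i(t)-\eta_i^{\delta}(t)$ becomes a signed sum of increments of time-changed unit-rate Poisson processes. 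Applying the triangle inequality and the elementary fact that $\mathbb{E}|N(a)-N(b)| \le \mathbb{E}|a-b|$ for such processes then yields
\[
e_i(t) := \mathbb{E}|\eta_i(t)-\eta_i^{\delta}(t)| \le \mathbb{E}|A_i(t)-A_i^{\delta}(t)| + \mathbb{E}|B_i(t)-B_i^{\delta}(t)|.
\]

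Next, I would split each integrand by adding and subtracting $(1-\eta_i^{\delta}(s))q_i^+(\eta(s))$ (and its $q^-$ analogue), producing an ``error'' term bounded pointwise by $\|q\|_\infty |\eta_i(s)-\eta_i^{\delta}(s)|$, and a ``Lipschitz'' term controlled by
\[
|q_i^+(\eta(s)) - q_i^+(\eta^{\delta}\circ\chi(s))| \le \sum_{j \ne i} \|\partial_j q_i^+\|_\infty\bigl(|\eta_j(s)-\eta_j^{\delta}(s)| + |\eta_j^{\delta}(s)-\eta_j^{\delta}(\chi(s))|\bigr),
\]
where the diagonal is absent because $\partial_i q_i^{\pm} \equiv 0$ by construction. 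The second summand inside the parentheses is the single-step oscillation of the Euler process, whose expectation I would bound by $2\delta\|q\|_\infty$ using the coupling: on $[\chi(s),s]$ site $j$ can only change if one of $N_j^{\pm}$ fires on an interval of length at most $\delta\|q\|_\infty$, so the bound follows from the standard Poisson identity used above.

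Summing over $i$ is then the step that demands care. The swap $\sum_i \sum_{j \ne i} \|\partial_j q_i^{\pm}\|_\infty = \sum_j \sum_{i \ne j} \|\partial_j q_i^{\pm}\|_\infty$ allows the inner sum to be absorbed into $\|D^{\ast}q\|_1$, multiplying either $\sum_j e_j(s) = E(s)$ or, for the oscillation contribution, $\sum_j 2\delta\|q\|_\infty = 2n\delta\|q\|_\infty$. Collecting terms for $t \in [0,T]$ yields the Gronwall-ready estimate
\[
E(t) \le 4 n \delta T \|q\|_\infty \|D^{\ast}q\|_1 + 2(\|q\|_\infty + \|D^{\ast}q\|_1)\int_0^t E(s)\,\mathrm{d}s,
\]
and Gronwall's lemma delivers the stated bound. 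The step I expect to be most delicate is exactly this reordering of sums: a less careful grouping of the Lipschitz estimates would replace $\|D^{\ast}q\|_1$ with $n\|Dq\|_\infty$, inflating the $n$-dependence and destroying the linear scaling of the error in $n\delta T$ that makes the approximation useful.
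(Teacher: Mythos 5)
Your proof is correct and follows essentially the same route as the paper: couple via shared Poisson clocks, reduce $\mathbb{E}|\eta_i(t)-\eta_i^\delta(t)|$ to the $L^1$ distance between the time changes, split via the Mean Value Theorem (using $\partial_i q_i^\pm\equiv 0$), bound the one-step oscillation by $2\delta\|q\|_\infty$, reorder the double sum so that the row sums of the derivative matrix give $\|D^\ast q\|_1$ rather than $n\|Dq\|_\infty$, and close with Gronwall. The only cosmetic difference is where the intermediate term is inserted (you compare $\eta(s)$ directly to $\eta^\delta\circ\chi(s)$ and then peel off $\eta^\delta(s)$, whereas the paper inserts $\eta^\delta(s)$ first, producing separate $T^{(1)}_\pm$, $T^{(2)}_\pm$ terms); both yield the same Gronwall inequality and constant.
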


To elucidate the order of approximation of the strong error bound in 
Theorem \ref{thm:EulerRate}, we return to the aforementioned `mean-field' case. Here,
both $\|q\|_{\infty}$ and $\|D^{\ast}q\|_1$ are $\mathcal{O}(1)$, and so
the expected number of discrepancies between $\eta(t)$ and the Euler approximation
$\eta^{\delta}(t)$ is $\mathcal{O}(n\delta)$. Recalling that the corresponding error for the
deterministic approximation is $\mathcal{O}(n^{1/2})$, the Euler
approximation can be guaranteed to be competitive only if $\delta = \mathcal{O}(n^{-1/2})$.

We now proceed on to obtain a strong error bound for the midpoint method. The situation here is more complex, now involving second-order
derivatives of $q$, so requiring $q \in \mathcal{C}^2([0,1]^n,\mathbb{R}_+^n)$. Once again, we define two variables that control this higher-order regularity:
\[
\gamma_n = \sum_{\substack{i,j=1 \\ i \neq j}}^{n} \|\partial^{2}_{j} q_{i} \|_{\infty},\qquad \mbox{and}\qquad\Gamma_n = \max_{i=1,\dots,n} \sum_{\substack{j,k=1\\ j,k\neq i}}^n \|\partial_{j} \partial_{k} q_{i}\|_{\infty} .
\]
For a function $f:[0,1]^n \to \mathbb{R}^m$, we will also require
\[
\|Df\|_{2,1} = \sum_{i=1}^n\left(\sum_{j=1}^n\|\partial_j f_i\|_{\infty}^2\right)^{1/2},\ 
\|D^{\ast}f\|_{2,1} = \sum_{i=1}^m\left(\sum_{j=1,j\neq i}^n \|\partial_j f_i\|_{\infty}^2\right)^{1/2}.
\]
The strong error bound for the midpoint method is presented in Theorem \ref{thm:MidPointRate}.
\begin{theorem}
\label{thm:MidPointRate}
Let $\eta(t)$ and $\mathring{\eta}^{\delta}(t)$ satisfy (\ref{model:eq4}) and the midpoint
approximation (\ref{model:eq6}) respectively. Then for any step-size $\delta > 0$
and any time $T > 0$,
\[
\sup_{t \in [0,T]} \sum_{i=1}^n \mathbb{E}|\eta_i(t) - \mathring{\eta}^{\delta}_i(t)| \leq 10 \alpha(n,\delta) (T+1) e^{2T(\|q\|_{\infty} + \|D^{\ast} q\|_1)}.
\]
where 
\begin{multline*}
\alpha(n,\delta) =  n\delta^{2} \|q\|_{\infty}(1+ \|q\|_{\infty}) (1+\|D^{\ast}q\|_1)\left( \Gamma_{n} +  \|D^{\ast} q\|_1 \right) \\ + \delta \|q\|_{\infty} \gamma_{n} +\delta^{1/2} \|q\|_{\infty}^{1/2} \|D^{\ast} q\|_{2,1}.
\end{multline*}
\end{theorem}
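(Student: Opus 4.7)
The plan is to adapt the argument of Theorem \ref{thm:EulerRate} while leveraging the second-order local accuracy of the predictor $p^\delta$. Setting $e_i(t) = \eta_i(t) - \mathring{\eta}_i^\delta(t)$ and subtracting (\ref{model:eq6}) from (\ref{model:eq4}), the error decomposes into compensated Poisson martingales plus drift integrals built from
\[
(1-\eta_i(s))q_i^+(\eta(s)) - (1-\mathring{\eta}_i^\delta(s))q_i^+(p^\delta\circ\mathring{\eta}^\delta\circ\chi(s)),
\]
together with the analogous $q^-$ terms. Using $\mathbb{E}|N(a)-N(b)|\le \mathbb{E}|a-b|$ for a unit-rate Poisson process $N$, the martingale contributions are absorbed into the expected drift after summing over $i$. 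Splitting off the standard Lipschitz contribution $q_i^+(\eta(s)) - q_i^+(\mathring{\eta}^\delta(s))$ produces the Gronwall-compatible term $2(\|q\|_\infty + \|D^\ast q\|_1)\int_0^t\sum_j\mathbb{E}|e_j(s)|\,ds$, from which the exponential factor in the statement will ultimately emerge.

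The heart of the proof is the local discrepancy $q_i^+(\mathring{\eta}^\delta(s)) - q_i^+(p^\delta\circ\mathring{\eta}^\delta(\chi(s)))$. Taylor expanding $q_i^+$ to second order around $\tilde z = p^\delta(\mathring{\eta}^\delta(t_k))$ for $s\in[t_k,t_{k+1}]$, the quadratic remainder is controlled by the squared displacement $\|\mathring{\eta}^\delta(s)-\tilde z\|^2$, which splits into an $O(\delta\|q\|_\infty)$ two-state chain variance and an $O(\delta^2\|q\|_\infty^2)$ contribution from the deterministic drift built into $p^\delta$. Contracting against Hessian entries produces the $n\delta^2$ contribution in $\alpha(n,\delta)$: the mixed second-order derivatives involving two off-diagonal indices give the $\Gamma_n$ factor, while combining a Hessian entry with the $p^\delta$ drift (itself involving first derivatives of $q$) produces the additive $\|D^\ast q\|_1$ term and the $(1+\|D^\ast q\|_1)$ prefactor.

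The crux is the linear Taylor piece $\sum_j \partial_j q_i^+(\tilde z)[\mathring{\eta}_j^\delta(s) - \tilde z_j]$. The construction of $p^\delta$ is designed so that, conditionally on $\mathring{\eta}^\delta(t_k)$, the mean of $\mathring{\eta}_j^\delta(s) - \tilde z_j$ at $s = t_k + \delta/2$ is only $O(\delta^2\|q\|_\infty^2)$, while its conditional standard deviation is of order $\sqrt{\delta\|q\|_\infty}$. The bias part is absorbed in a single sum over $j$ that picks up a further derivative of $q^+$ in going from the argument $\tilde z$ to $\mathring{\eta}^\delta(t_k)$, producing the $\delta\|q\|_\infty \gamma_n$ term. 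The fluctuating part is treated by conditioning on $\mathring{\eta}^\delta(t_k)$ and applying Cauchy-Schwarz in $\ell^2$ against $(\|\partial_j q_i^+\|_\infty)_j$; using the conditional independence of the coordinates of $\mathring{\eta}^\delta$ on $[t_k,t_{k+1}]$ (sites decouple over each sub-interval by construction), summing over $i$ yields precisely the mixed norm $\|D^\ast q\|_{2,1}$ and the $\delta^{1/2}\|q\|_\infty^{1/2}\|D^\ast q\|_{2,1}$ contribution.

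Summing the drift bound over $i\in\{1,\dots,n\}$, taking $\sup_{t\in[0,T]}$, and invoking Gronwall's inequality with Lipschitz constant $2(\|q\|_\infty+\|D^\ast q\|_1)$ produces the $e^{2T(\|q\|_\infty+\|D^\ast q\|_1)}$ factor and the $(T+1)$ scaling. The main obstacle I expect is the linear Taylor term: extracting the $\sqrt{\delta}$ rate without picking up spurious factors of $n$ hinges on cleanly separating the small conditional bias (where the midpoint design pays off) from the $O(\sqrt{\delta})$ fluctuations, and on recognising that the correct norm pairing with the conditionally independent coordinate fluctuations is the mixed $\ell^2$-$\ell^1$ norm $\|D^\ast q\|_{2,1}$ rather than $\|D^\ast q\|_1$; a careless $\ell^1$ bound would yield no improvement over the Euler rate.
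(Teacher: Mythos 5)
Your outline follows essentially the same route as the paper: Gronwall on $\sum_i \mathbb{E}|\eta_i-\mathring\eta_i^\delta|$ with Lipschitz constant $2(\|q\|_\infty+\|D^\ast q\|_1)$, second-order Taylor at each step to pull out the $n\delta^2$ (Hessian) and $\delta^{1/2}\|D^\ast q\|_{2,1}$ (fluctuation) contributions, and recognition that the $\ell^2$-in-$j$ pairing, not $\ell^1$, is what buys the square-root gain. Two presentational differences are immaterial: the paper Taylor-expands around $\mathring\eta^\delta\circ\chi(s)$ rather than the predictor $p^\delta\circ\mathring\eta^\delta\circ\chi(s)$, and bounds the fluctuation via the predictable quadratic variation of the martingale $M_i^+(s)=\sum_j\partial_j q_i^+(\mathring\eta^\delta\circ\chi(s))M_j(s)$ (using that a spin system never has simultaneous jumps) rather than by your conditional-independence$+$Cauchy--Schwarz argument; both deliver $\mathbb{E}T_{i,1}^2\lesssim\delta\|q\|_\infty\sum_j\|\partial_j q_i^+\|_\infty^2$.

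Two points in your outline need tightening. First, the $\delta\|q\|_\infty\gamma_n$ term is not produced by the bias of the linear term or by ``a further derivative in going from $\tilde z$ to $\mathring\eta^\delta(t_k)$''; it comes from the diagonal Hessian entries $\|\partial_j^2 q_i^+\|_\infty$ in the second-order remainder, paired with the single-site second moment $\mathbb{E}(\mathring\eta_j^\delta(s)-\mathring\eta_j^\delta\circ\chi(s))^2=\mathbb{E}|\mathring\eta_j^\delta(s)-\mathring\eta_j^\delta\circ\chi(s)|\le 2\delta\|q\|_\infty$ (the increment is $\{-1,0,1\}$-valued, so the square collapses to the first moment). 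Similarly the extra $\|D^\ast q\|_1$ factor in the $n\delta^2$ block comes from replacing $q_j^\pm(\mathring\eta^\delta\circ\chi(u))$ by $q_j^\pm(p^\delta\circ\mathring\eta^\delta\circ\chi(u))$ inside the site-$j$ drift (the paper's $T_{i,2},T_{i,3}$), not from a ``Hessian times drift'' coupling. Second, and more substantively: your statement that the conditional bias of $\mathring\eta_j^\delta(s)-\tilde z_j$ is $O(\delta^2\|q\|_\infty^2)$ is true \emph{only at} $s=t_k+\delta/2$; away from the midpoint it is $O(\delta\|q\|_\infty)$, so a pointwise bound is not enough. The improvement to $O(\delta^2)$ per step must be extracted from the integral cancellation $\int_{t_k}^{t_{k+1}}(s-t_k-\delta/2)\,\mathrm{d}s=0$, which the paper isolates by subtracting the auxiliary term $\tilde U_i(s)=(s-\chi(s)-\tfrac{\delta}{2})\sum_j\partial_j q_i^+(\mathring\eta^\delta\circ\chi(s))q_j(\mathring\eta^\delta\circ\chi(s))$. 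Without that step your argument would lose a factor of $\delta$ on the bias term.
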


Returning to the mean-field case, both $\gamma_n$ and $\Gamma_n$ are $\mathcal{O}(1)$,
while $\|D^{\ast}q\|_{2,1} $ is  $ \mathcal{O}(n^{1/2})$, implying the error bound
in Theorem \ref{thm:MidPointRate} is of order $\mathcal{O}(n\delta^2 + (n\delta)^{1/2})$.
If $\delta \geq n^{-1/3}$, the $\mathcal{O}(n\delta^2)$ term dominates as
$n\to\infty$, implying that the midpoint method is competitive with the
deterministic approximation if $\delta = \mathcal{O}(n^{-1/4})$. 
Ignoring constants, for large $n$, this suggests that the midpoint method 
has smaller error than the Euler method for the viable case $\delta \geq n^{-1}$. 

\section{Exact error asymptotics}
\label{sec:ExactError}
Having now established strong error bounds for the two approximation methods, we shall formally demonstrate that the midpoint method should outperform the Euler method in the $n \to \infty$ regime for viable step sizes. To accomplish
this, we will prove that the $\mathcal{O}(n\delta)$ error complexity is \emph{exact} by characterising the limiting behaviour of weighted errors
\[
(n\delta)^{-1}\sum_{i=1}^n \phi_i (\eta_i(t)-\eta^{\delta}_i(t)), \quad\phi_i \in \mathbb{R},
\]
between the finite spin system $\eta(t)$ and the Euler approximation $\eta^{\delta}(t)$,
as $n\delta\to\infty$. More specifically, these errors will be shown in Theorem \ref{thm:EulerExact} to be well-approximated
by $n^{-1}\sum_{i=1}^n \phi_i \mathcal{E}_i(t)$, where $\mathcal{E}(t)$
is the solution to the system of ordinary differential
equations:
\begin{subequations}
\begin{align}
\frac{\mathd}{\mathd t} \mathcal{E}_i(t) &= Dq(\rho(t))^\top\mathcal{E}(t)
+ \frac12 D^{\ast}q(\rho(t))^\top q(\rho(t)) \label{eq:mathcalE}\\
\mathcal{E}_i(0) &= 0.
\end{align}
\end{subequations}
%Although it is not clear a priori that $\mathcal{E}(t)$ should take this form, it should also not be too surprising. Indeed, the first term of (\ref{eq:mathcalE}) accounts for the exponential propagation of error, while the second term, representing the local error itself, is reminiscent of the correction in the midpoint method.
Using Gronwall's inequality, the growth of $\mathcal{E}(t)$ can be readily controlled. Since for each $i~=~1,\dots,n$ and any $t \geq 0$, \[
\mathcal{E}_i(t) \leq \int_0^t \|Dq\|_{1} \|\mathcal{E}(s)\|_{\infty} \mathd s
+ \frac12 \|D^{\ast}q\|_{1} \|q\|_{\infty} t,
\]
for any $T > 0$, 
\begin{equation}
\label{eq:EGrowth}
\sup_{t\in[0,T]}\|\mathcal{E}(t)\|_{\infty} \leq \tfrac12 \|q\|_{\infty}\|D^{\ast}q\|_{1} T e^{T\|Dq\|_1},
\end{equation}
which grows comparably to the error bound in Theorem \ref{thm:EulerRate}. 
%Comparing (\ref{eq:EGrowth}) to Theorem \ref{thm:EulerRate} suggests that,up to constants, the strong error bound there cannot be further improved without imposing additional assumptions.

%Aside from a little extra bookkeeping, this proves hardly more challenging
%than establishing only convergence.

%
%\begin{lemma}
%\label{lem:Egrowth}
%For any $T > 0$, 
%\[
%\sup_{t\in[0,T]}\|\mathcal{E}(t)\|_{\infty} \leq \tfrac12 \|q\|_{\infty}\|DQ\|_{1} T e^{T\|DQ\|_1}.
%\]
%\end{lemma}
%\begin{proof}
%For each $i=1,\dots,n$ and any $t \geq 0$, there is
%\[
%\mathcal{E}_i(t) \leq \int_0^t \|DQ\|_{1} \|\mathcal{E}(s)\|_{\infty} \mathd s
%+ \frac12 \|DQ\|_{1} \|q\|_{\infty} t,
%\]
%and so the result follows from Gronwall's inequality.
%\end{proof}

\subsection{Assumptions}
Although the strong error bounds have been obtained in virtually complete generality, unfortunately, the same could not be achieved for the exact error asymptotics. Indeed, some regularity between the functions $q_i$, $i=1,\dots,n$, needs to be enforced. This can be accomplished by parameterisation, assuming that $q_i(\bv x) = q(\bv x; \bv z_i)$ for each $i=1,\dots,n$. Now, one need only impose appropriate assumptions on $q$ and $\bv z_i$. 

The idea here will be to take advantage of the small metric entropy and closure under pointwise multiplication of the following set of analytic functions. Let $\Omega \subset \mathbb{C}^d$ be a compact parameter space and let $\Phi$ denote the set of functions $\phi:\Omega\to\mathbb{R}$ analytic on $\Omega$, each with an analytic continuation to a function $\phi^{\ast}:\Omega'\to\mathbb{C}$ where $\Omega \subset \Omega' \subset \mathbb{C}^d$ satisfying $|\phi^{\ast}(z)| \leq 1$ for
$z \in \Omega'$. For each $\epsilon > 0$, we assign an
$\epsilon$-internal covering $\Phi_\epsilon \subset \Phi$ of minimal cardinality under the uniform metric. From \cite[Theorem 9.2]{vitushkin}, there exists a constant $C_{\Omega,\Omega',d}$ depending only on $\Omega,\Omega'$ and their dimension $d$ such that for any $0 < \epsilon < 1$,
$\log|\Phi_{\epsilon}| \leq C_{\Omega,\Omega',d} |\log \epsilon|^{d+1}$.
Also note that, for any
chosen parameters $z_1,\dots,z_n \in \Omega$, we can bound
the Rademacher complexity of the set of vectors $\Phi(\bv z) = \{(\phi(z_i))_{i=1}^n\,:\,\phi\in\Phi\}$. Indeed, letting $\sigma_1,\dots,\sigma_n$ denote independent Rademacher random variables, using similar arguments to \cite[Theorem 3.2]{Devroye:2001aa},
\begin{equation}
\label{eq:RadBound}
\Rad(\Phi(\bv z)) \coloneqq \mathbb{E}\sup_{\phi \in \Phi(\bv z)}\frac1n \sum_{i=1}^n \phi_i \sigma_i \leq \frac{C_{\Omega,\Omega',d}^{1/2} \Gamma\left(\frac{d+3}{2}\right)}{\sqrt{n}}.
\end{equation}
Define a norm on the space of analytic functions $\phi:\Omega \to \mathbb{R}$
by $\|\phi\|_{\Phi} \coloneqq \sup_{z \in \Omega'} |\phi^{\ast}(z)|$.
By construction, if $\phi \in \Phi$, then $\|\phi\|_{\Phi} \leq 1$, and if $\|\phi\|_{\Phi} < \infty$, then $\phi / \|\phi\|_{\Phi} \in \Phi$. With
this notation, our primary assumption is as follows.
\begin{assumption}
\label{ass:Main}
There exist points $z_1,\dots,z_n \in \Omega$ and a function $q$ such that $q_i(\bv x) = q(\bv x; z_i)$ for each $i=1,\dots,n$, and there exists $\Omega'$ with $\Omega~\subsetneq~\Omega'~\subset~\mathbb{C}^d$ for which
\begin{itemize}
\item $\|q\|_{\Phi} \coloneqq \sup_{\bv x \in [0,1]^n}\|q(\bv x;\cdot)\|_{\Phi} < +\infty$;
\item $\|Dq\|_{\Phi} \coloneqq \sum_{i=1}^n\sup_{\bv x \in [0,1]^n}\|\partial_{x_i} q(\bv x; \cdot)\|_{\Phi} < +\infty$.
\end{itemize}
\end{assumption}
While we expect Assumption \ref{ass:Main} to hold for many practical finite
spin systems (as most of them are constructed to occupy some region of space),
it is not altogether trivial to verify.
An obvious sufficient condition to guarantee that $\|\phi\|_{\Phi} < \infty$ is to impose that
$\phi$ has an analytic continuation that is entire on $\mathbb{C}^d$. 
For example, if $z_1,\dots,z_n \in [-1,1]^d$ and
\[
q^+_i(x) = \frac{\lambda}n \sum_{j=1}^n \kappa(z_i - z_j)x_j,\qquad
q^-_i(x) = \mu,\qquad i=1,\dots,n,
\]
where $\lambda,\mu > 0$, and $\kappa(x) = e^{-a\|x\|^2}$ is a Gaussian kernel with bandwidth $1/a$,
then Assumption \ref{ass:Main} is satisfied for any choice of $\Omega'$. Taking $\Omega' = \{z \in \mathbb{C} \; : \; |z| \leq R\}^d$ for $R > 1$, then
\[
\|q\|_{\Phi} \leq \lambda e^{4 a d R^2} + \mu,\qquad \|Dq\|_{\Phi} \leq \lambda e^{4 a d R^2}.
\]
This form arises in connection with the \emph{Hanski incidence function model} \cite{mcvinish2014limiting}. On the other hand, consider the Ising model with Gaussian Kac potentials \cite{mourrat2017convergence}, given by
\begin{equation}
\label{eq:IsingKacGaussian}
q_i^\pm(x) = \frac12\left[1 \pm \tanh\left(\frac{\beta}{n} \sum_{j=1}^n e^{-a\|z_i-z_j\|^2} (2 x_j - 1)\right)\right],
\end{equation}
where each $z_1,\dots,z_n \in [-1,1]^d$. Since $\tanh(x)$ is not entire, we must take care to choose $\Omega'$, $a$ and $\beta$ appropriately to avoid its poles. 
%Once again, consider the case where $\Omega' = \{z \in \mathbb{C} \, : \, |z| \leq R\}^d$ for $R > 1$. It is an elementary exercise to show that $|\tanh(z)| \leq 1 + |\tan \Im(z)|$, and since $|\tan x| \leq |\sec x| \leq (\frac{\pi}{2} - x)^{-1}$ for $|x| < \frac{\pi}{2}$, provided $\beta e^{4ad R^2} < \frac{\pi}{2}$,
%\[
%\|q\|_{\Phi} \leq 1 + \left(\frac{\pi}{2} - \beta e^{4 a d R^2}\right)^{-1}.
%\]
%Similarly, since $|\mathrm{sech}^2(z)| \leq \sec^2(\Im(z))$, under the same conditions,
%\[
%\|Dq\|_{\Phi} \leq \frac{\beta e^{4adR^{2}}}{\left(\frac{\pi}{2}-\beta e^{4adR^{2}}\right)^{2}}.
%\]
%These bounds are finite provided that $R$ can be chosen such that
%$1 < R^2 < \frac{1}{4ad}\log(\frac{\pi}{2\beta})$.
Fortunately, choosing $\Omega$ to be a sufficiently thin open set on $\mathbb{C}^d$ encompassing $[-1,1]^d$, there will always exist an $\Omega'$ for which Assumption \ref{ass:Main} holds.
%Nevertheless, it can be seen that the Ising--Kac model satisfies Assumption \ref{ass:Main} for \emph{any} $a > 0$ provided that $\beta < \frac{\pi n}{2}$. Indeed, under the assumptions on the domain of $z_i$, we are free to choose $\Omega$ to be any open set on $\mathbb{C}^d$ encompassing $[-1,1]^d$ such that $\|\Im(z)\|_{\infty} < \frac{1}{4ad} \log(\frac{\pi}{2\beta})$ for any $z \in \Omega$. Then, there will always exist an $\Omega'$ for which Assumption \ref{ass:Main} holds.

\subsection{Main result}
Now that appropriate assumptions on the rate functions $q_i$ have been established,
we can proceed with our main result on the exact error asymptotics of the Euler method.
Of course, quantifying a rate of convergence requires an appropriate metric --- a natural choice, like the strong error bounds in the previous section, would be the $L^1$ norm. Unfortunately, we have found this to require extraordinary control over the quadratic variation of the errors $\eta_i(t) - \eta^{\delta}_i(t)$. Instead, we shall do so
under the classic metrisation of convergence in probability. For any random variable $X$, we let $\bar{\mathbb{E}}X \coloneqq \mathbb{E}[X \wedge 1]$, recalling that for random variables $X,X_1,X_2,\dots$, $\bar{\mathbb{E}}|X_n - X| \to 0$ if and only if $X_n \cp X$. We shall make use of the fact that the truncated $L^1$ norm satisfies
\begin{equation}
\label{eq:KyFanBounds}
\bar{\mathbb{E}}|X| \leq 2 \,\inf\{\epsilon > 0 \; : \; \mathbb{P}(|X| > \epsilon) < \epsilon\}.
\end{equation}
Also, to facilitate the proof, it will be necessary to define the following:
\[
\tilde{\Gamma}_n = \sum_{i=1}^n \max_{j,k=1,\dots,n} \|\partial_j \partial_k q_i\|_{\infty},\qquad
\theta_n = \sum_{\substack{i,k=1\\i\neq k}}^n \max_{j=1,\dots,n} \|\partial_j \partial_k^2 q_i\|_{\infty}.
\]
With this in tow, we present our main result in Theorem \ref{thm:EulerExact}.
\begin{theorem}
\label{thm:EulerExact}
Suppose that Assumption \ref{ass:Main} holds and $n\delta \geq 1$. There exists a polynomial $\mathcal{P}$ in
\[
T,\enskip\|q\|_{\Phi},\enskip\|Dq\|_\Phi,\enskip \|D^{\ast}q\|_{F},\enskip n\tilde{\Gamma}_n,\enskip n^{1/2} \theta_n,
\]
such that
\begin{multline*}
\bar{\mathbb{E}}\sup_{\phi\in\Phi}\sup_{t\in[0,T]}\left|\frac{1}{n\delta}\sum_{i=1}^{n}\phi_{i}(\eta_{i}(t)-\eta^{\delta}_{i}(t)-\delta\mathcal{E}_{i}(t))\right| \\\leq \mathcal{P} e^{6T(\|q\|_{\Phi} + \|Dq\|_{\Phi})}\left(\frac{\log(1 + n\delta)}{(n\delta)^{1/3}} + \delta\right).
\end{multline*}
\end{theorem}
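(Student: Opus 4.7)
The plan is to analyze the residual $R_i(t) := \eta_i(t) - \eta_i^{\delta}(t) - \delta\,\mathcal{E}_i(t)$ and show that the weighted sum $(n\delta)^{-1}\sum_i \phi_i R_i(t)$ is uniformly small over $\phi\in\Phi$ and $t\in[0,T]$. Using the Poisson representations (\ref{model:eq4}) and (\ref{model:eq5}), write the difference $\Delta_i(t) := \eta_i(t)-\eta_i^{\delta}(t)$ as a compensated martingale $M_i(t)$ plus a drift integral, and split the integrand as
\[
[f_i(\eta(s))-f_i(\eta^{\delta}(s))] + [f_i(\eta^{\delta}(s))-\tilde f_i(\eta^{\delta}(s),\eta^{\delta}\circ\chi(s))],
\]
where $f_i(\eta)=(1-\eta_i)q_i^+(\eta)-\eta_i q_i^-(\eta)$ and $\tilde f_i$ is the analogous quantity with $q^{\pm}$ frozen at $\chi(s)$. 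The first bracket, after Taylor expansion around $\rho(s)$, contributes the linear-in-$\Delta$ drift matching the first term of the ODE for $\mathcal{E}$, plus a second-order remainder controlled by $\tilde\Gamma_n$ together with the strong error bound of Theorem \ref{thm:EulerRate}.

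For the second bracket, I would Taylor-expand $q_i^{\pm}(\eta^{\delta}(s))-q_i^{\pm}(\eta^{\delta}\circ\chi(s))$ to first order in $\eta_j^{\delta}(s)-\eta_j^{\delta}\circ\chi(s)$. Conditional on $\mathcal{F}_{\chi(s)}$ that increment has mean $(s-\chi(s))\bigl[(1-\eta_j^{\delta}\circ\chi(s))q_j^+ - \eta_j^{\delta}\circ\chi(s)\,q_j^-\bigr]$ together with a mean-zero martingale deviation. Integrating $s-\chi(s)$ over $[t_k,t_{k+1}]$ produces the factor $\tfrac{1}{2}$ appearing in the forcing of the ODE for $\mathcal{E}$, and replacing $\eta^\delta$ by $\rho$ at an $O(\delta)$ cost (via Theorem \ref{thm:EulerRate}) recovers $\tfrac{1}{2}\,D^{\ast}q(\rho)^{\top}q(\rho)$. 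The residual second-order Taylor terms generate the $n\tilde\Gamma_n$ and $n^{1/2}\theta_n$ contributions in $\mathcal{P}$.

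Assembling the decomposition, $R_i(t)$ satisfies a linear integral equation of the form
\[
R_i(t) = \mathcal{M}_i(t) + \int_0^t \sum_j \partial_i q_j(\rho(s))\,R_j(s)\,\mathd s + E_i(t),
\]
where $\mathcal{M}_i$ is a residual martingale and $E_i$ is a deterministic remainder of the right order. Projecting against $\phi\in\Phi$, rescaling by $(n\delta)^{-1}$, and applying a Gronwall argument in the $\Phi$-weighted norm $\sup_{\phi\in\Phi}|(n\delta)^{-1}\sum_i \phi_i\,\cdot|$ reduces everything to bounding the corresponding martingale and deterministic suprema; the exponential factor $e^{6T(\|q\|_\Phi+\|Dq\|_\Phi)}$ arises from this Gronwall step, with Assumption \ref{ass:Main} controlling the derivatives of $q$ uniformly across $\phi\in\Phi$.

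The principal obstacle is the uniform control of $\sup_{\phi,t}|(n\delta)^{-1}\sum_i \phi_i\mathcal{M}_i(t)|$ over an infinite-dimensional class of weights. The plan is a chaining argument: apply the Vitushkin covering bound $\log|\Phi_\epsilon|\leq C|\log\epsilon|^{d+1}$ to a minimal $\epsilon$-net $\Phi_\epsilon$, invoke a martingale concentration inequality (Freedman, or Burkholder--Davis--Gundy combined with Markov) on each $\phi\in\Phi_\epsilon$ together with a time discretization of $[0,T]$, take a union bound, and add back the covering error, which is at most of order $\epsilon/\delta$ since $|R_i|\leq 1+\delta|\mathcal{E}_i|$. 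Optimizing $\epsilon$ against the concentration threshold and converting tail probabilities to $\bar{\mathbb{E}}$ via (\ref{eq:KyFanBounds}) is what I expect to produce the rate $\log(1+n\delta)/(n\delta)^{1/3}$ --- the cube-root exponent emerging from balancing the polylogarithmic metric entropy against the variance $O(1/(n\delta))$ of the rescaled martingale. The additive $\delta$ term absorbs the second-order Taylor remainders and the Euler discretization error of the ODE for $\mathcal{E}$ on the lattice $\{k\delta\}$.
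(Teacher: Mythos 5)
Your decomposition and Gronwall strategy follow the same broad route as the paper's proof, and you correctly identify that the martingale supremum is controlled by a Fuk--Nagaev/Freedman-type inequality chained through the Vitushkin covering (no separate time discretisation is needed, since the maximal inequality already gives you the running supremum). But there are two substantive gaps in how you propose to control the drift remainders.

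First, the ``second-order remainder controlled by $\tilde\Gamma_n$ together with the strong error bound of Theorem \ref{thm:EulerRate}'' is far from sufficient. The term you need to control is
\[
\sum_{i}\mathbb{E}\Bigl| q_i^+(\eta(s))-q_i^+(\eta^\delta(s))-\sum_j\partial_j q_i^+(\rho(s))(\eta_j(s)-\eta_j^\delta(s))\Bigr|,
\]
and a bare second-order Taylor estimate gives a bound of the form $\tilde\Gamma_n\,\mathbb{E}(\sum_j|\eta_j-\eta_j^\delta|)^2$, which you cannot close from Theorem \ref{thm:EulerRate} alone because that theorem controls only the first moment $\mathbb{E}\sum_j|\eta_j-\eta_j^\delta|=\mathcal{O}(n\delta)$; you need CLT-scale fluctuation control to obtain the $n^{-1/2}$ terms that appear in Lemma \ref{lem:SecOrderTerm}. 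The paper supplies this through the independent-site approximations $\omega(t)$ and $\omega^\delta(t)$ (Appendix \ref{sec:IndepSite}): independence of the coordinates gives variance $\mathcal{O}(n)$ for weighted sums at quasi-product points, and the discrepancy process $\bar J$ and its $L^2$ norm (Lemmas \ref{lem:JWBarBound}, \ref{lem:JVBarBound}) quantify how far $\eta$ deviates from that product structure. Lemma \ref{lem:JWBarBoundSecond} is the resulting second-order weak error bound, and it is the technical core of the argument; your sketch does not indicate how you would obtain anything like it without this coupling.

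Second, your plan handles the uniform-in-$\phi$ control only through the chaining bound on the martingale. But several of the drift remainders are themselves \emph{random} functionals whose supremum over $\phi\in\Phi$ must be controlled separately. For instance, $R_\pm^{(1)}(\phi,t)=\int_0^t n^{-1}\sum_i\phi_i(\rho_i-\eta_i)\bigl[\sum_j\partial_j q_i^\pm(\rho)\mathcal{E}_j\bigr]\,\mathrm{d}s$ and the term $\mathcal{D}_3$ in the discretisation error cannot be bounded by pulling $\|\phi\|_\infty$ inside and using deterministic $\Phi$-norms --- the random factor $(\rho_i-\eta_i)$ would then lose the $n^{-1/2}$ cancellation. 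The paper handles these via the symmetrisation method and the Rademacher complexity bound (\ref{eq:RadBound}) combined with the contraction inequality for Rademacher averages; that mechanism is absent from your proposal. Finally, your heuristic attributing the $(n\delta)^{-1/3}$ rate to ``balancing polylogarithmic entropy against variance $1/(n\delta)$'' is not quite the actual mechanism: a sub-Gaussian inequality with variance $1/(n\delta)$ would give $(n\delta)^{-1/2}$; the cube root arises from having only an \emph{expectation} bound on the predictable quadratic variation and paying a Markov price $V/L$ for truncating it at level $L$, which, optimised against the exponential tail in $x$, degrades the exponent from $1/2$ to $1/3$ (see the choice of $x,L$ in (\ref{eq:MartULSoln}); the metric entropy only contributes the $\log(1+n\delta)$ factor and a convergent geometric series in the chaining).
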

The proof of Theorem \ref{thm:EulerExact} is quite long, comprising Appendix \ref{sec:EulerExactProof}. For mean-field-type models satisfying Assumption \ref{ass:Main}, we can see that $\mathcal{P}$ is necessarily bounded independently of $n$ and $\delta$. However, we suspect that the order of approximation in $n\delta$ of this bound can be improved with better control over the quadratic variation of the errors $\eta_i(t) - \eta^{\delta}_i(t)$. Consider, for instance, that if $\rho(t)$ has a stable equilibrium $\rho_{\infty} = \lim_{t\to\infty} \rho(t)$, then $n^{-1}\sum_{i=1}^n \phi_i \mathcal{E}_i(t) \to 0$ as $t \to \infty$. Furthermore, if $\eta(t)$ starts near its (quasi-)stationary regime with each $\eta_i(0)$ taken to be independent with $\mathbb{P}(\eta_i(0) = \rho_{\infty,i})$ for each $i=1,\dots,n$, then $\mathcal{E}(t) = 0$ and so Theorem \ref{thm:EulerExact} suggests that
\[
\bar{\mathbb{E}}\sup_{\phi\in\Phi}\sup_{t\in[0,T]}\left|\sum_{i=1}^n \phi_i (\eta_i(t) - \eta_i^{\delta}(t))\right| = \bigO((n\delta)^{2/3} + n\delta^2),
\]
excluding logarithmic terms. On the other hand, assuming that the error terms $\mathcal{E}_i$ comprise all first-order errors in the Euler approximation, we expect the Euler and midpoint approximations to perform comparably in this scenario. In the next section, we shall conduct numerical experiments to support this hypothesis. Based on this, we should expect the optimal rate of convergence to be at most $\bigO((n\delta)^{-1/2} + \delta)$, excluding logarithmic terms.

\section{Numerical experiments}
\label{sec:Numerics}
To complement the results obtained thus far and assess their relevance in practice, we conduct numerical experiments comparing the performance of the Doob-Gillespie algorithm to the Euler and midpoint approximations. 
We shall begin by assessing accuracy of the approximations by simulating the coupled processes 
(\ref{model:eq4}), 
(\ref{model:eq5}) and (\ref{model:eq6}) with respect to the same Poisson processes. %(that is, simulating all three processes with the Doob--Gillespie algorithm using the same interarrival times). %The rest is a matter of bookkeeping, making sure to split interarrival times and change transition rates at the endpoints of each of the intervals $[k\delta,(k+1)\delta)$. 
Of course, this negates the computational advantages of the Euler and midpoint approximations, so we shall forego comparisons of computation time between the methods for now. 

For the sake of illustration, we restrict ourselves to the one--dimensional case (with site space $\{1,\dots,n\}$), considering a transition rate function constructed according to an Gaussian convolutional kernel:
\begin{equation}
\label{eq:GaussConvExample}
q_i^+(\bv x) = \frac{2 \sigma}{n\sqrt{\pi}}\sum_{j=1}^n \exp\left[-\left(\frac{\sigma(i - j)}{n}\right)^2\right] x_j,
\end{equation}
for some scale parameter $\sigma > 0$. The extinction rates are chosen uniformly $q_i^-(\bv x) \equiv 1$. The corresponding spin system is of mean-field-type, and satisfies Assumption \ref{ass:Main}. As $n\to\infty$, the solution $\bv \rho_{\infty}$ to $q_i(\bv \rho_{\infty}) = 0$ for $i=1,\dots,n$ becomes increasingly uniform $\bv \rho_{\infty} \equiv \rho$. The quasi-stationary distribution of $\eta$ is approximately a product of $n$ Bernoulli random variables with probability $1/2$; in fact, for each $i=1,\dots,n$, %Furthermore, interpreting (\ref{eq:GaussConvExample}) as a Riemann sum, we find that for each $i=1,\dots,n$,
\[
q_i^+(\rho\bv 1) = 2 \rho + \bigO(e^{-\sigma^2}) + \bigO(n^{-1}),\qquad \mbox{as } \sigma,n\to \infty,
\]
%Therefore, for large $n$ and moderate $\sigma$, 
Since (\ref{eq:GaussConvExample}) is of convolutional form, the corresponding potentials can be computed efficiently using the univariate fast Fourier transform.

Figure \ref{fig:EulerMidpointCompare} presents a realisation of the Euler and midpoint approximations and their evolution in time for the same spin system, coupled together with the same Poisson processes. Locations of errors from an exact Doob-Gillespie simulation for the same process are also shown. Here, the process does not start near quasi-stationarity, and the midpoint approximation displays significantly fewer errors than the Euler approximation throughout the duration of the simulation.  %In light of Theorems \ref{thm:EulerRate}, \ref{thm:MidPointRate}, and \ref{thm:EulerExact}, this is not unexpected. However, even with only $n = 2000$ sites, the improvement seen is substantial and often worth the roughly doubled computation time.

\begin{figure}
\includegraphics[width=\textwidth]{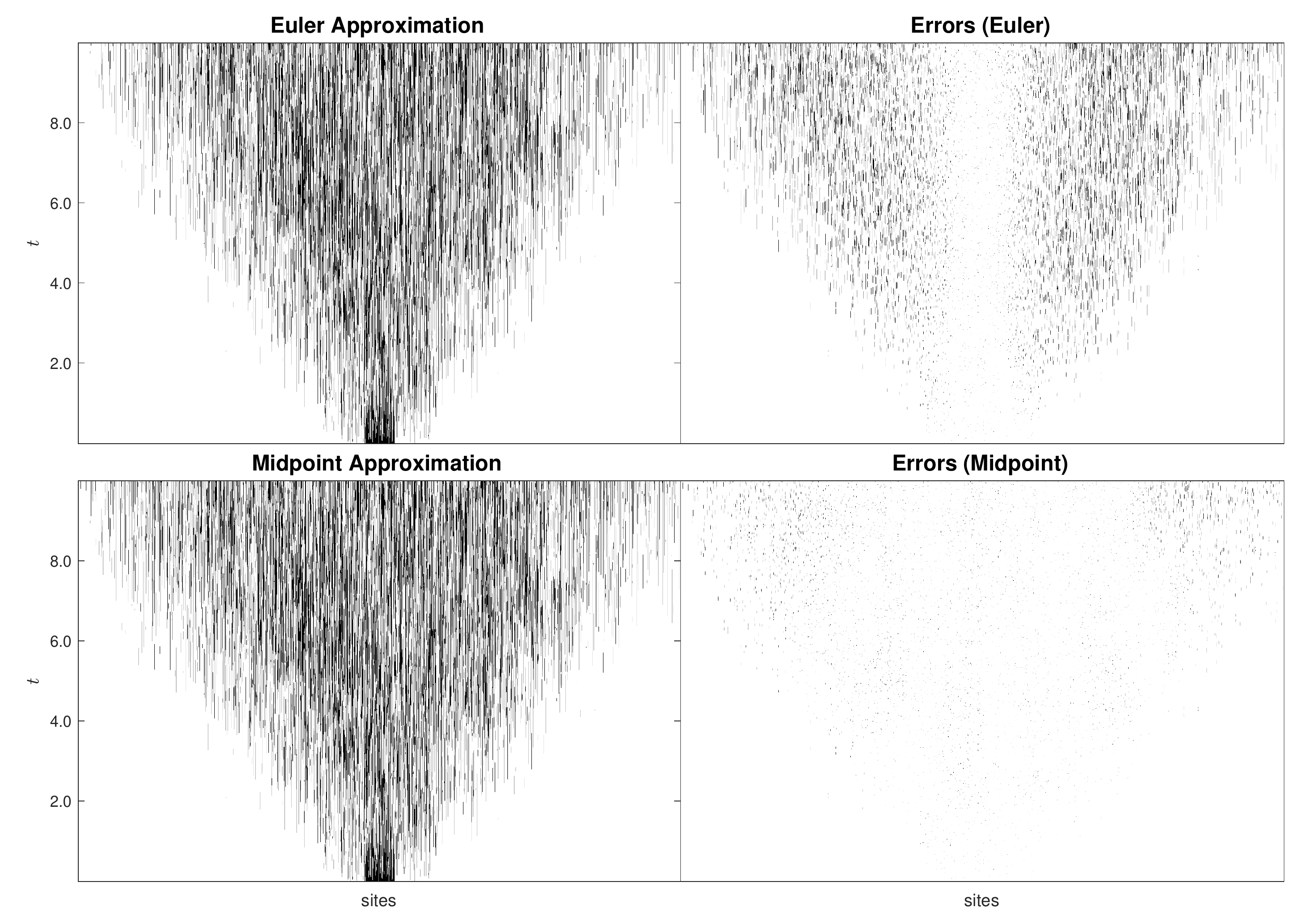}
\caption{
\label{fig:EulerMidpointCompare}
Simulation of the Euler (top) and midpoint (bottom) approximations for a finite spin system with $n=2000$ sites, $q_i^+$ given by (\ref{eq:GaussConvExample}) with $\sigma = 20$, and $q_i^- \equiv 1$ for $i=1,\dots,n$, coupled together with the same Poisson processes, and with step size $\delta = n^{-1/3}$. For each process, the realisation is presented on left with presence ($\eta_i(t) = 1$) denoted by a black site and absence ($\eta_i(t) = 0$) a white site. Locations of errors compared with the coupled Doob-Gillespie simulation are presented on right in black.}
\end{figure}

To more precisely compare the accuracy of the two methods, we can measure the
largest proportion of errors encountered up to that point in time. For the Euler and midpoint approximations, this is given by
\begin{equation}
\label{eq:NumError}
\sup_{s \in [0,t]} \frac1n \sum_{i=1}^n |\eta_i(s) - \eta_i^{\delta}(s)|,\qquad \sup_{s \in [0,t]} \frac1n \sum_{i=1}^n |\eta_i(s) - \mathring{\eta}_i^{\delta}(s)|.
\end{equation}
These quantities are plotted in Figure \ref{fig:CummaxErrors}, whose left-hand side once again illustrates the improved accuracy for the midpoint approximation when initialised away from quasi-stationarity. In fact, we see improved accuracy for the midpoint approximation, even for a much larger step size (ten times larger, in this case). On the other hand, as the process tends towards quasi-stationarity in time, $\sup_{i=1,\dots,n}\mathcal{E}_i(t) \to 0$. As discussed earlier, in this regime, Theorem \ref{thm:EulerExact} would still imply increased accuracy for the midpoint approximation. However, this does not appear to be the case --- as seen in the right-hand plot of Figure \ref{fig:CummaxErrors}, when the process is started near quasi-stationarity, the accuracy of the Euler and midpoint approximations appear comparable. This supports the hypothesis that the rate of convergence in Theorem \ref{thm:EulerExact} may be further improved.
\begin{figure}
\includegraphics[width=0.45\textwidth]{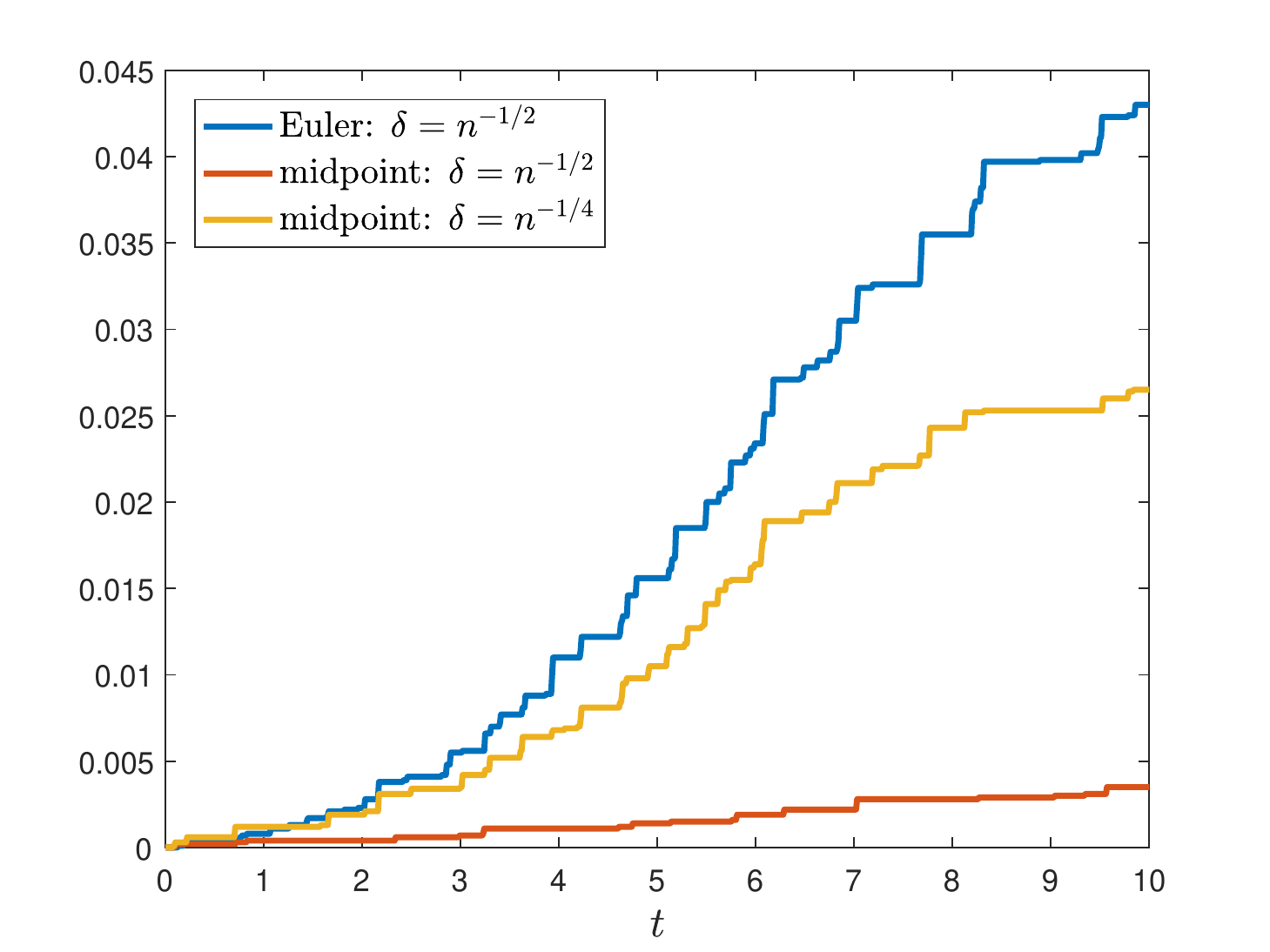}
\includegraphics[width=0.45\textwidth]{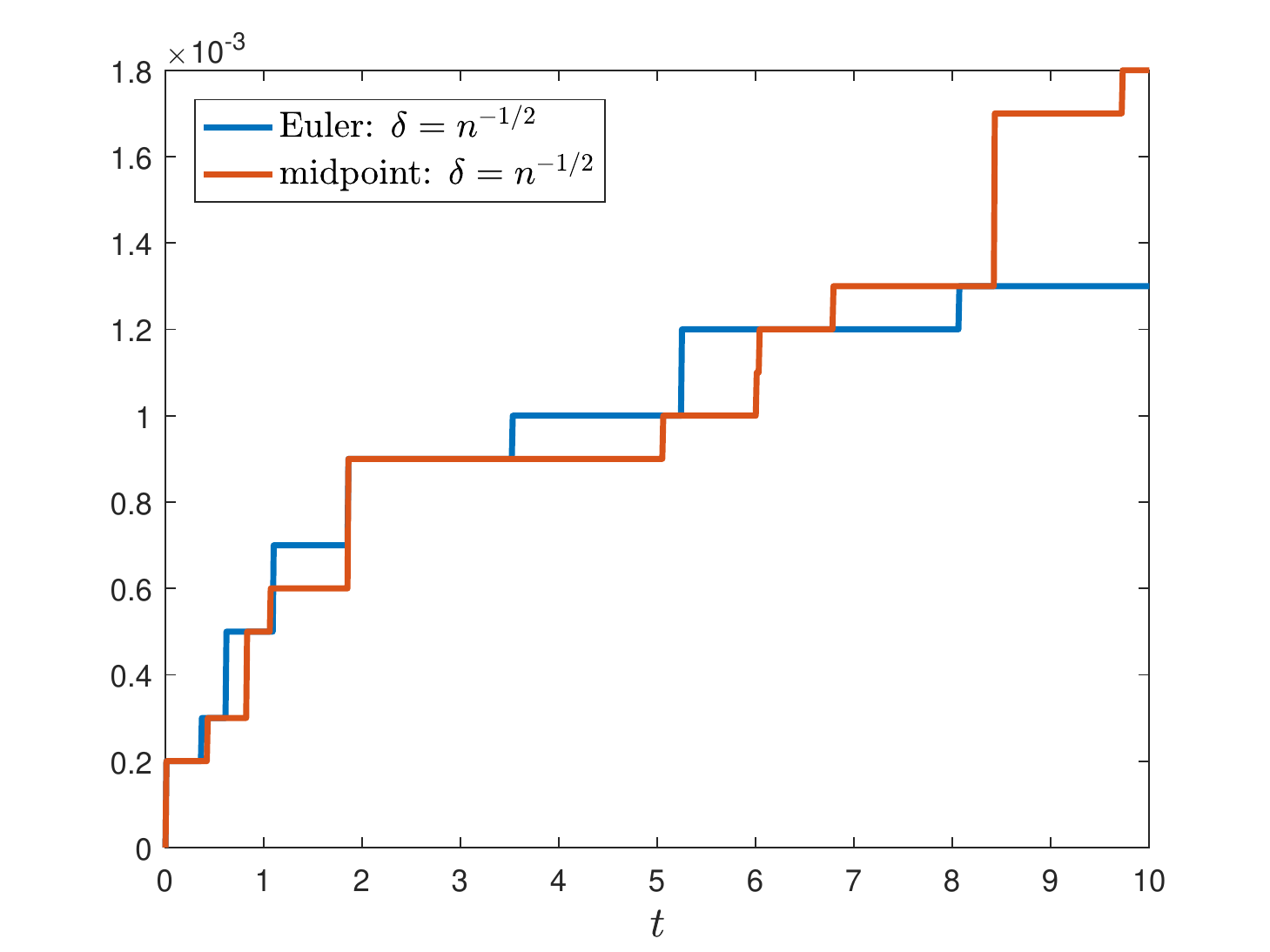}
\caption{
\label{fig:CummaxErrors}
Plots of the largest proportion of errors encountered (\ref{eq:NumError}) up to time $t$ for the Euler approximation with step size $\delta = n^{-1/2}$ (blue) and midpoint approximation with step sizes $\delta = n^{-1/2}$ (red) and $\delta = n^{-1/4}$ (yellow) for a spin system with $n=10^4$ sites, $q_i^+$ given by (\ref{eq:GaussConvExample}) with $\sigma = 20$, and $q_i^- \equiv 1$ for $i=1,\dots,n$, coupled together with the same Poisson processes. The process is started with 10\% of its sites initially occupied (left) and near the quasi-stationary distribution with $\mathbb{P}(\eta_i(0) = 1) = 1/2$ for all $i=1,\dots,n$ (right).}
\end{figure}
Finally, to illustrate the precision of the exact asymptotics in Theorem \ref{thm:EulerExact}, Figure \ref{fig:NormalisedErrors} displays the average normalised errors $(n\delta)^{-1}\sum_{i=1}^n(\eta_i(t) - \eta_i^{\delta}(t))$ with the average predicted errors $n^{-1}\sum_{i=1}^n \mathcal{E}_i(t)$ for the Euler approximation. 
%As expected, there is agreement, although it can be difficult to see unless $n\delta$ is quite large. 

\begin{figure}
\includegraphics[width=0.45\textwidth]{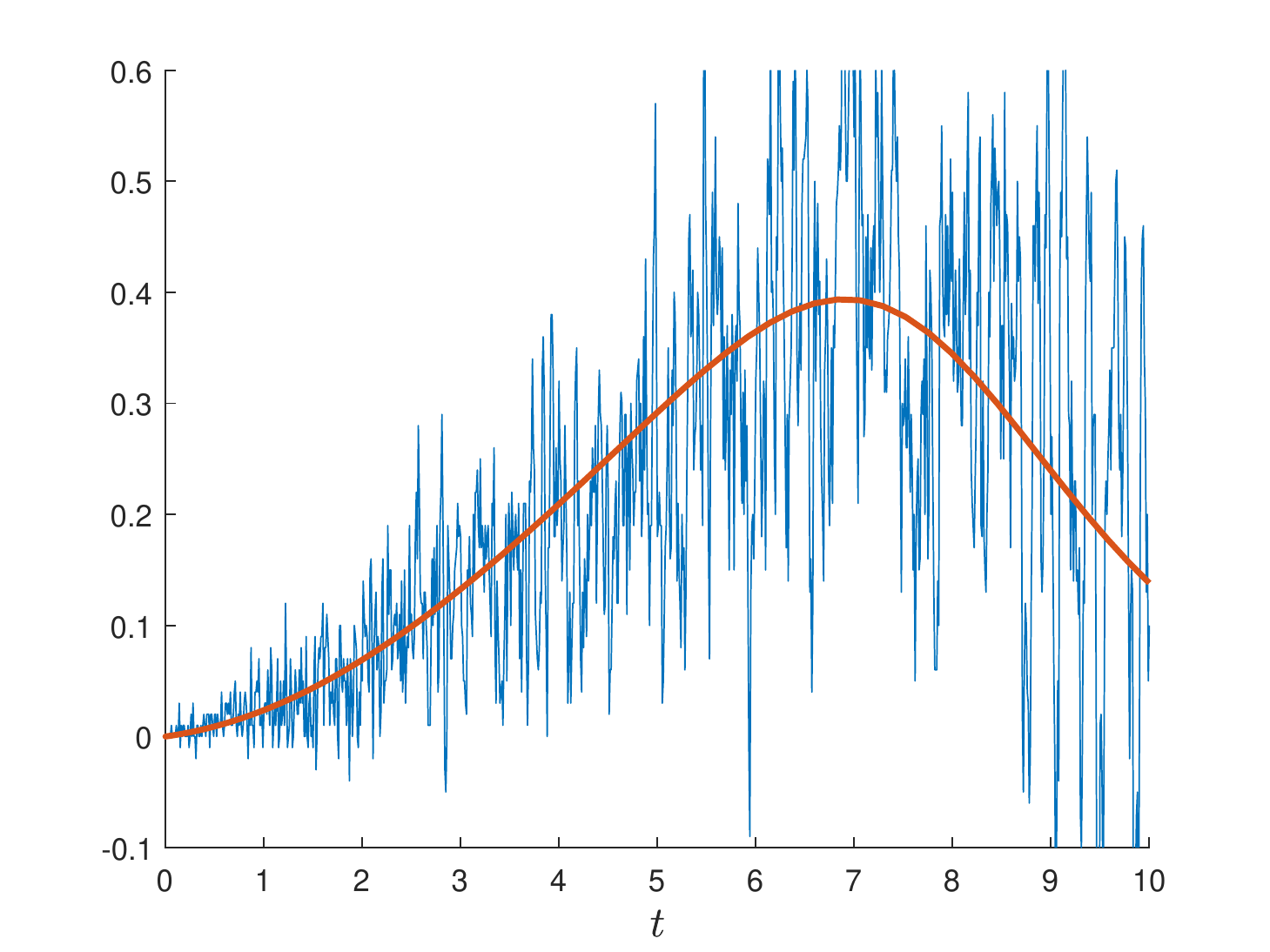}
\includegraphics[width=0.45\textwidth]{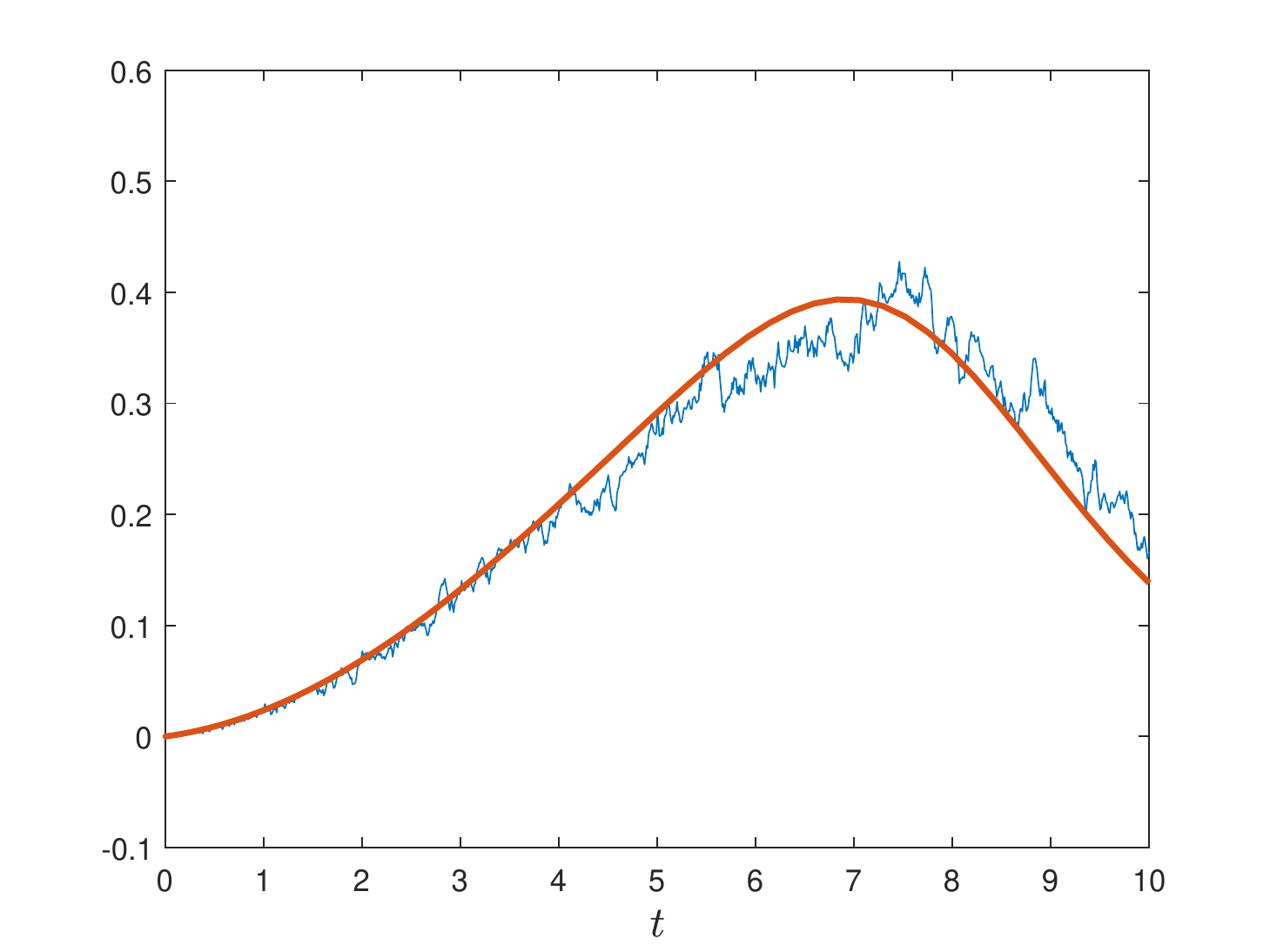}
\caption{
\label{fig:NormalisedErrors}
Comparison of normalised errors
$(n\delta)^{-1}\sum_{i=1}^n (\eta_i(t)-\eta^{\delta}_i(t))$ for the Euler approximation (blue) to
average predicted first-order error $n^{-1}\sum_{i=1}^n \mathcal{E}_i(t)$ (red) for
a finite spin system with $q_i^+$ given by (\ref{eq:GaussConvExample}) with $\sigma = 20$, and $q_i^- \equiv 1$ for $i=1,\dots,n$, with 10\% of its sites initially occupied. The number of sites $n$ and the step size $\delta$ are taken to be $n=10^4$ and $\delta = n^{-1/2}$ (left); and $n = 10^5$ and $\delta = n^{-1/4}$ (right). }
\end{figure}

\begin{figure}
\centering\includegraphics[width=.8\textwidth]{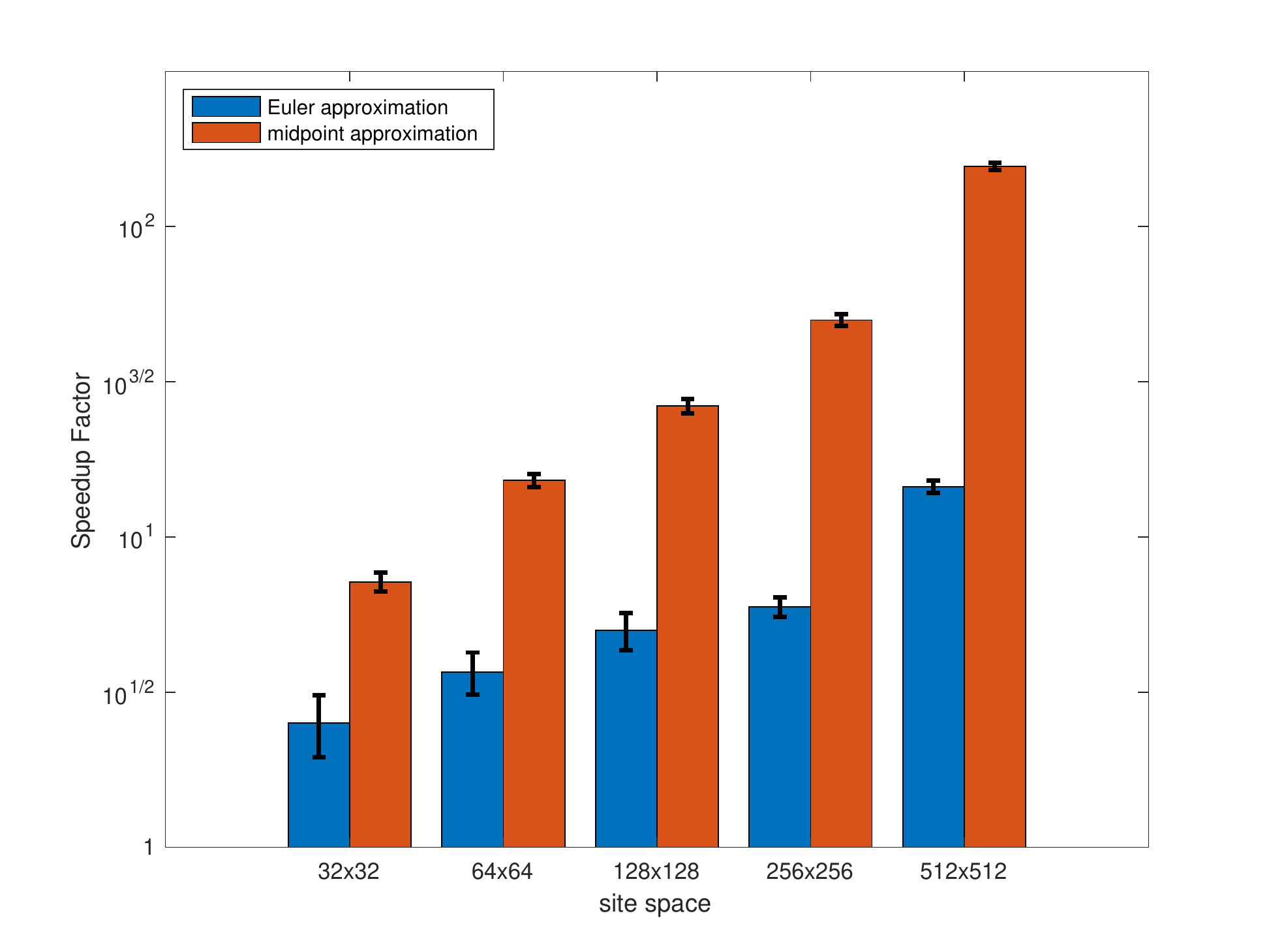}
\caption{Average speedup factors for Algorithms \ref{alg:EulerApprox} (blue) and \ref{alg:MidpointApprox} (red) over the Doob--Gillespie algorithm for the Ising--Kac model over two-dimensional site spaces of size $2^m \times 2^m$ for $m \in \{5,\dots,9\}$, with initial states $\eta_i(0)$ generated independently with $\mathbb{P}(\eta_i(0) = 1) = 1/2$. \label{fig:Speedup}}
\end{figure}
Turning now to computation time, in Figure \ref{fig:Speedup}, average speedup factors over the time interval $[0,T]$ for the Euler and midpoint approximations (Algorithms \ref{alg:EulerApprox} and \ref{alg:MidpointApprox}, respectively) over the Doob--Gillespie algorithm are presented. Here, we consider the Ising--Kac model over a two-dimensional site space $S = \{1,\dots,n\}\times\{1,\dots,n\}$ with $q_i^+$ and $q_i^-$ given by (\ref{eq:IsingKacGaussian}) with $\beta = 1$ and $a = 40 n^{-1}$, which we have shown to satisfy Assumption \ref{ass:Main}. In all cases, the process is initialised with an initial distribution of $\mathbb{P}(\eta_i(0) = 1) = 1/2$.
%, as this ensures a large number of transitions across the time period. 
The step sizes for the Euler and midpoint approximations are chosen to be of the order of the maximal competitive step size; that is, $\delta = n^{-1/2}$ for the Euler approximation, and $\delta = n^{-1/4}$ for the midpoint approximation. As was shown in Figure \ref{fig:CummaxErrors}, the accuracy of the midpoint approximation is competitive to that of the Euler approximation for these choices of step size. Despite the roughly-doubled computation time for the midpoint approximation over the Euler approximation for the same step size, for large site spaces, the significantly smaller step size for the midpoint approximation sees greatly improved performance over both the Euler and Doob-Gillespie algorithms. Altogether, Figures \ref{fig:CummaxErrors} and \ref{fig:Speedup} suggest that, among the three methods discussed, the midpoint approximation is the better choice for finite spin systems not near (quasi-)stationarity. On the other hand, since the Euler approximation is universally faster to compute than the midpoint approximation for the same step size, Figure \ref{fig:CummaxErrors} suggests that the Euler approximation may be preferred for finite spin systems near (quasi-)stationarity. 

%For example, consider the simplest case where each $\eta_i$ is an independent two-state Markov process with birth and death rates $\lambda$ and $\mu$, respectively. The process $\bar{X}(t) \coloneqq n^{-1}\sum_{i=1}^n \eta_i(t)$ is a birth-death process on $\{0,1,\dots,n\}$ with birth and death rates $\lambda_i = \lambda(n - i)$ and $\mu_i = \mu i$, respectively. The deterministic process $p(t) = p_i(t)$ and the first-order error term $\mathcal{E}(t) = \mathcal{E}_i(t)$ satisfy
%\begin{align*}
%p'(t) &= \lambda p(t) \left(1 - \frac{\mu}{\lambda} - p(t)\right),\\
%\mathcal{E}'(t) &= \lambda \mathcal{E}(t)\left(1 - \frac{\mu}{\lambda} - 2p(t)\right) + \frac12 \lambda^2 p(t)\left(1 - \frac{\mu}{\lambda} - p(t)\right)\left(1 - \frac{\mu}{\lambda} - 2p(t)\right). 
%\end{align*}
%
%Improving this bound will likely require more advanced techniques for controlling the quadratic covariation of the coupled $X(t)-Z(t)$. 
%\begin{corollary}
%\label{cor:EulerWeak}
%For any function $f \in \mathcal{C}^2(\mathbb{R}^n)$,
%\end{corollary}

\appendix

\section{Independent site approximation}
\label{sec:IndepSite}
To enable the application of standard techniques, as in \cite{BMP:15,HMP:18}, we compare $\eta(t)$
with an independent site approximation $\omega(t)$ with transition rates
\begin{align*}
\omega_{i}(t) & \to \omega_{i}(t)+1\qquad\mbox{at rate }q^+_{i}(\rho(t))(1-\omega_{i}(t))\\
\omega_{i}(t) & \to \omega_{i}(t)-1\qquad\mbox{at rate }q^-_{i}(\rho(t))\omega_{i}(t),
\end{align*}
for each $i=1,\dots,n$, and where $\rho(t)$ is defined in (\ref{ODE:eq1}). Because each element of $\omega(t)$ is independent
for all time, for $1\leq r\leq2$, using the
method of bounded differences \cite[Corollary 3.2]{Boucheron2013}
and the Lindeburg argument \cite[eq. (2.5)]{HMP:18}, we can estimate for any smooth function $f$,
\begin{equation}
\left\lVert f(\omega(t))-f(\rho(t))\right\rVert _{r}
 \leq\frac{1}{2}\left(\sum_{j=1}^{n}\left\lVert \partial_{j}f\right\rVert _{\infty}^{2}\right)^{1/2}+\frac{1}{2}\sum_{j=1}^{n}\left\lVert \partial_{j}^{2}f\right\rVert _{\infty}.\label{eq:WFuncApprox}
\end{equation}
To compare $\eta(t)$ and $\omega(t)$, we couple them together using the
basic coupling for spin systems (see \cite[Section III.1]{Liggett:2005}). Let
$J_{i}(T)\coloneqq\sup_{t\in[0,T]}\ind\{\eta_{i}(t)\neq \omega_{i}(t)\}$
and define $\bar{J}(T)\coloneqq n^{-1}\sum_{i=1}^{n}J_{i}(T)$. Now,
since $\mathcal{X}=(\eta,\omega,J,\rho)$ is a Feller process, denoting by $L_{\mathcal{X}}$
the infinitesimal generator of $\mathcal{X}$, we may define the process
$\mathcal{J}(t)$ by
\begin{align}
\mathcal{J}(t) & \coloneqq L_{\mathcal{X}}\bar{J}(\eta(t),\omega(t),J(t),\rho(t))\nonumber \\&=\frac{1}{n}\sum_{i=1}^{n}L_{\mathcal{X}}J_{i}(\eta(t),\omega(t),J(t),\rho(t))\nonumber \\
 & =\frac{1}{n}\sum_{i=1}^{n}(1-J_{i}(t))\{(1-\omega_{i}(t))|q^+_{i}(\eta(t))-q^+_{i}(\rho(t))|\label{eq:JDerivProc}\\&\qquad\qquad+\omega_{i}(t)|q^-_{i}(\eta(t))-q^-_{i}(\rho(t))|\}\nonumber,
\end{align}
and by Dynkin's formula, since $\bar{J}(0)=0$, there exists a martingale
$M$ such that
\begin{equation}
\bar{J}(t)=M(t)+\int_{0}^{t}\mathcal{J}(s)\mathd s,\qquad t\geq0.\label{eq:DynkinJBar}
\end{equation}
\begin{lemma}
\label{lem:JWBarBound}
For any $T\geq0$, 
\begin{align}
\mathbb{E}\bar{J}(T) & \leq n^{-1} (\|D^{\ast}q\|_{2,1}+\gamma_n)Te^{2 T\|D^{\ast}q\|_{1}}\label{eq:JBarBound1}\\
\|\bar{J}(T)\|_{2} & \leq 2 n^{-1} (1+(\|D^{\ast}q\|_{2,1}+\gamma_n) T) e^{3T\|D^{\ast}q\|_{1}}. \label{eq:JBarBound2}
\end{align}
\end{lemma}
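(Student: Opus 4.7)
The plan is to apply Dynkin's formula (\ref{eq:DynkinJBar}) to $\bar{J}$ and close the resulting inequality by Gronwall, controlling the drift $\mathcal{J}(s)$ in (\ref{eq:JDerivProc}) by triangulating through the independent site process $\omega$ and applying the comparison estimate (\ref{eq:WFuncApprox}).

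For (\ref{eq:JBarBound1}), I would take expectations in (\ref{eq:DynkinJBar}) to get $\mathbb{E}\bar{J}(T) = \int_0^T \mathbb{E}\mathcal{J}(s)\,\mathd s$. In (\ref{eq:JDerivProc}) the prefactors $(1-J_i)(1-\omega_i)$ and $(1-J_i)\omega_i$ are at most $1$, and each difference splits as
$|q_i^{\pm}(\eta(s)) - q_i^{\pm}(\rho(s))| \leq |q_i^{\pm}(\eta(s)) - q_i^{\pm}(\omega(s))| + |q_i^{\pm}(\omega(s)) - q_i^{\pm}(\rho(s))|$.
Because $\partial_i q_i^{\pm} = 0$, the mean value inequality yields $|q_i^{\pm}(\eta(s)) - q_i^{\pm}(\omega(s))| \leq \sum_{j\neq i}\|\partial_j q_i^{\pm}\|_{\infty}\, \ind\{\eta_j(s)\neq \omega_j(s)\} \leq \sum_{j\neq i}\|\partial_j q_i^{\pm}\|_{\infty}\, J_j(s)$. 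Summing over $i$ and invoking the inequality $\|Dq^{\pm}\|_1 \leq \|D^{\ast}q\|_1$ from the paper gives a contribution of $2\|D^{\ast}q\|_1\, \bar{J}(s)$. The residual term is handled by (\ref{eq:WFuncApprox}) with $r=1$ and $f = q_i^{\pm}$, which after summing contributes $n^{-1}(\|D^{\ast}q\|_{2,1} + \gamma_n)$. Putting these together, $\mathbb{E}\mathcal{J}(s) \leq 2\|D^{\ast}q\|_1 \mathbb{E}\bar{J}(s) + n^{-1}(\|D^{\ast}q\|_{2,1} + \gamma_n)$, and Gronwall delivers (\ref{eq:JBarBound1}).

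For (\ref{eq:JBarBound2}), I would combine Minkowski's inequality in (\ref{eq:DynkinJBar}) with a quadratic variation bound on $M$. Since $\bar{J}$ is a pure-jump process with jumps of size $1/n$, $\langle \bar{J}\rangle_T = n^{-1}\bar{J}(T)$, and $\langle M\rangle_T = \langle \bar{J}\rangle_T$, so $\|M(T)\|_2^2 \leq \mathbb{E}\langle M\rangle_T = n^{-1}\mathbb{E}\bar{J}(T)$; substituting (\ref{eq:JBarBound1}) and applying AM-GM to the resulting square root produces a bound of the form $n^{-1}(1 + (\|D^{\ast}q\|_{2,1}+\gamma_n)T)e^{T\|D^{\ast}q\|_1}$. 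The same triangulation as before, this time using (\ref{eq:WFuncApprox}) with $r=2$, gives $\|\mathcal{J}(s)\|_2 \leq 2\|D^{\ast}q\|_1 \|\bar{J}(s)\|_2 + n^{-1}(\|D^{\ast}q\|_{2,1} + \gamma_n)$. A second Gronwall application then yields (\ref{eq:JBarBound2}), with the exponential factor $e^{3T\|D^{\ast}q\|_1}$ arising from combining the $e^{T\|D^{\ast}q\|_1}$ from the martingale bound with the $e^{2T\|D^{\ast}q\|_1}$ accumulated through Gronwall.

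The main obstacle I expect is the careful bookkeeping required to pass from the $q_i^{\pm}$ norms to the $q_i$ norms in the statement, mirroring the paper's observation that $\|Dq^{\pm}\|_p \leq \|D^{\ast}q\|_p$ but now for the $(2,1)$ norm and for the second-derivative quantity $\gamma_n$, together with the precise constant accounting needed to recover the factor of $2$ and the exponent $3T\|D^{\ast}q\|_1$ in (\ref{eq:JBarBound2}).
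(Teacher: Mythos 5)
Your proposal follows essentially the same route as the paper: take expectations (resp.\ $L^2$ norms via Minkowski) in the Dynkin decomposition \eqref{eq:DynkinJBar}, bound $\mathcal{J}$ via \eqref{eq:WFuncApprox} and the mean value theorem, control $\|M(T)\|_2$ through the jump quadratic variation $[M]_T = n^{-1}\bar J(T)$ together with \eqref{eq:JBarBound1}, and close with Gronwall. The only quibble is notational: you write $\langle\bar J\rangle_T$ and $\langle M\rangle_T$ where you mean the optional quadratic variation $[M]_T=[\bar J]_T$ (indeed $\langle\bar J\rangle$ is meaningless for a non-martingale increasing process), but since $\mathbb{E}M(T)^2=\mathbb{E}[M]_T$ the substance of the bound is unaffected.
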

\begin{proof}
Combining (\ref{eq:JDerivProc}) and (\ref{eq:DynkinJBar}), 
\[
\frac{\mathd}{\mathd t}\mathbb{E}\bar{J}(t)=\mathbb{E}\mathcal{J}(t)\leq\frac{1}{n}\sum_{i=1}^{n}\mathbb{E}|q^+_{i}(\eta(t))-q^+_{i}(\rho(t))|+\mathbb{E}|q^-_{i}(\eta(t))-q^-_{i}(\rho(t))|,
\]
and so applying (\ref{eq:WFuncApprox}) with the Mean Value Theorem
gives
\begin{equation}
\mathbb{E}\mathcal{J}(t)\leq 2 \|D^{\ast}q\|_{1}\mathbb{E}\bar{J}(t)+ n^{-1} (\|D^{\ast}q\|_{2,1}+\gamma_n) \label{eq:JDerivL1}
\end{equation}
and so
\[
\mathbb{E}\bar{J}(t)\leq 2 \|D^{\ast}q\|_{1}\int_{0}^{t}\mathbb{E}\bar{J}(s)\mathd s +  t n^{-1} (\|D^{\ast}q\|_{2,1}+\gamma_n).
\]
Equation (\ref{eq:JBarBound1}) now follows from Gronwall's inequality.
To show (\ref{eq:JBarBound2}), once again from (\ref{eq:DynkinJBar}),
\begin{equation}
\|\bar{J}(t)\|_{2}\leq\|M(t)\|_{2}+\int_{0}^{t}\|\mathcal{J}(s)\|_{2}\mathd s,\qquad t\geq0,\label{eq:JBar2Decomp}
\end{equation}
and from (\ref{eq:JDerivProc}),

\[
\|\mathcal{J}(t)\|_{2}\leq \left\lVert \frac{1}{n}\sum_{i=1}^{n}q^+_{i}(\eta(t))-q^+_{i}(\rho(t))\right\rVert_{2}+\left\lVert\frac{1}{n}\sum_{i=1}^{n}q^-_{i}(\eta(t))-q^-_{i}(\rho(t))\right\rVert_{2},
\]
whereby (\ref{eq:WFuncApprox}) with the Mean Value Theorem once again
implies
\begin{equation}
\|\mathcal{J}(t)\|_{2}\leq 2 \|D^{\ast}q\|_{1}\|\bar{J}(t)\|_{2} + n^{-1}(\|D^{\ast}q\|_{2,1}+\gamma_n),\qquad t\geq0.\label{eq:JDerivL2}
\end{equation}
%It only remains to bound $\|M(t)\|_{2}$. However, since $M(0)=0$, the Burkholder-Davis-Gundy inequality implies $\mathbb{E}M(t)^{2}\leq4\mathbb{E}\langle M\rangle_{t}$, where $\langle M\rangle_{t}$ is the predictable quadratic variation of $M$. 
It only remains to bound $\|M(t)\|_{2}$. However, since $M(0)=0$, $\mathbb{E}M(t)^{2}= \mathbb{E} [M]_{t}$, where $ [ M ]_{t}$ is the quadratic variation
of $M$. As $\bar{J}$ moves only by jumps of size $ n^{-1} $, it follows $[M]_{t} = n^{-1} \bar{J}(t) $. 
%\mcv{[LIAM: Thinking agin about your conversation with Andrew, the next part seems unnecesary. As $ \bar{J}_{W}  $ moves only by jumps of size $ n^{-1} $, it follows $ \left[ M^{W}\right]_{t} = n^{-1} \bar{J}^{W}(t) $.  So from BDG $ \| M^{W}(t)\|_{2} \leq 2n^{-1/2} $. It may appear that this bound is weaker than the one from the more detailed analysis, but the dominant term in the final bound is going to be $ \|D^{\ast}Q\|_{2,1} $ which will be $n^{1/2} $ anyway in the usual setting.]} To bound the predictable quadratic variation, observe that,
%for any $i\neq j$,
%\begin{align*}
%L_{\mathcal{X}}[J_{i}J_{j}] & =(1-J_{i})J_{j}\{(1-W_{i})|C_{i}(X)-C_{i}(p)|+W_{i}|E_{i}(X)-E_{i}(p)|\}\\
% & +J_{i}(1-J_{j})\{(1-W_{j})|C_{j}(X)-C_{j}(p)|+W_{j}|E_{j}(X)-E_{j}(p)|\}\\
% & = J_{j}L_{\mathcal{X}}J_{i} + J_{i}L_{\mathcal{X}}J_{j}.
%\end{align*}
%Therefore, for $\mathcal{J}_{2}(t)\coloneqq L_{\mathcal{X}}\bar{J}(X(t),W(t),J(t),p(t))^{2}$,
%\begin{align*}
%\mathcal{J}_{2} & =\frac{1}{n^{2}}\left[\sum_{i=1}^{n}L_{\mathcal{X}}J_{i}+\sum_{i,j=1,\,i\neq j}^{n}L_{\mathcal{X}}[J_{i}J_{j}]\right]\\
% & =\frac{1}{n}\mathcal{J}+\frac{2}{n}\sum_{i=1}^{n} \left(\bar{J}-\frac{1}{n}J_{i}\right) L_{\mathcal{X}}J_{i} \leq\frac{1}{n}\mathcal{J}+2\bar{J}\mathcal{J},
%\end{align*}
%and so
%\begin{equation}
%\label{eq:JWMartBound}
%\langle M\rangle_{t}=\int_{0}^{t}\mathcal{J}_{2}(s)\mathd s-2\int_{0}^{t}\bar{J}(s)\mathcal{J}(s)\mathd s\leq\frac{1}{n}\int_{0}^{t}\mathcal{J}(s)\mathd s.
%\end{equation}
Therefore, by (\ref{eq:JBarBound1}),
\begin{align}
\left\lVert M(T)\right\rVert _{2} & \leq n^{-1} (\|D^{\ast}q\|_{2,1}+\gamma_n) ^{1/2} T^{1/2}e^{T\|D^{\ast}q\|_{1}} \\
& \leq n^{-1} (1 + (\|D^{\ast}q\|_{2,1} + \gamma_n)T) e^{T\|D^{\ast}q\|_1}.\label{eq:JBarMartBound}
\end{align}
%\begin{align}
%\left\lVert M(T)\right\rVert _{2} & \leq2n^{-1/2}\left(\int_{0}^{T}\mathbb{E}\mathcal{J}(t)\mathd t\right)^{1/2}=2n^{-1/2}[\mathbb{E}\bar{J}(T)]^{1/2}\nonumber \\
% %& \leq2n^{-1/2}(\|D^{\ast}Q\|_{2,1}+\gamma_n)^{1/2}T^{1/2}e^{\frac{1}{2}T\|D^{\ast}Q\|_{1}}\nonumber \\
%% & \leq(n^{-1}+\|D^{\ast}Q\|_{2,1}+\gamma_n)T^{1/2}e^{\frac{1}{2}T\|D^{\ast}Q\|_{1}},\label{eq:JBarMartBound}
%& \leq 4 n^{-1} (\|D^{\ast}Q\|_{2,1}+\gamma_n) ^{1/2} T^{1/2}e^{T\|D^{\ast}Q\|_{1}} \\
%& \leq 4 n^{-1} (1 + (\|D^{\ast}Q\|_{2,1} + \|D^2 Q\|_{1,1})T) e^{T\|D^{\ast}Q\|_1}.\label{eq:JBarMartBound}
%\end{align}
Combining
(\ref{eq:JBar2Decomp}), (\ref{eq:JDerivL2}), and (\ref{eq:JBarMartBound}) yields 
%together with the observation that $1 + t^{1/2} \leq e^{t/2}$,
\[
%\|\bar{J}(t)\|_{2}\leq\|D^{\ast}Q\|_{1}\int_{0}^{t}\|\bar{J}(s)\|_{2}\mathd s+(n^{-1}+\|D^{\ast}Q\|_{2,1}+\gamma_n)t^{1/2}e^{\frac{1}{2}t(1+\|D^{\ast}Q\|_{1})},
\|\bar{J}(t)\|_{2}\leq 2\|D^{\ast}q\|_{1}\int_{0}^{t}\|\bar{J}(s)\|_{2}\mathd s+ 2 n^{-1} (1+(\|D^{\ast}q\|_{2,1}+\gamma_n)t ) e^{t\|D^{\ast}q\|_{1}},
\]
from whence (\ref{eq:JBarBound2}) follows by Gronwall's inequality.
\end{proof}
Similarly, for the Euler tau-leaping process, we compare $\eta^\delta(t)$ with
its independent site approximation $\omega^{\delta}(t)$ with transition rates
\begin{align*}
\omega^{\delta}_{i}(t) & \to \omega^{\delta}_{i}(t)+1\qquad\mbox{at rate }q^+_{i}(\rho^{\delta} \circ \chi (t))(1-\omega^{\delta}_{i}(t))\\
\omega^{\delta}_{i}(t) & \to \omega^{\delta}_{i}(t)-1\qquad\mbox{at rate }q^-_{i}(\rho^{\delta} \circ\chi (t))\omega^{\delta}_{i}(t),
\end{align*}
for each $i=1,\dots,n$, where
\[
\frac{d}{dt} \rho^{\delta}_i (t) = (1- \rho^{\delta}_{i}(t)) q^+_{i}(\rho^{\delta} \circ \chi(t)) - \rho^{\delta}_i(t) q^-_{i}(\rho^{\delta} \circ \chi(t))
\]
As before, each element of $\omega^{\delta}(t)$ is independent for all time, $\mathbb{E}\omega^{\delta}(t) = \rho^{\delta}(t)$, and
\begin{equation}
\label{eq:VFuncApprox}
\|f(\omega^{\delta}(t))-f(\rho^{\delta}(t))\|_r \leq\frac{1}{2}\left(\sum_{j=1}^{n}\| \partial_{j}f\| _{\infty}^{2}\right)^{1/2}+\frac{1}{2}\sum_{j=1}^{n}\| \partial_{j}^{2}f\| _{\infty}.
\end{equation}
Define $J_i^\delta(T)\coloneqq \sup_{t\in[0,T]}\ind\{\eta^\delta_i(t)\neq \omega^\delta_i(t)\}$
and $\bar{J}_\delta(T)\coloneqq n^{-1}\sum_{i=1}^n J_i^\delta(T)$.
Neither $\eta^\delta$ nor $\omega^\delta$ are Markov processes, however, they are Markov over each
time interval $[k\delta,(k+1)\delta)$ for $k \geq 0$. Therefore, for any $k \geq 0$, we can couple $\eta^\delta$ and $\omega^\delta$ together using the basic coupling, and
form a Feller process $\mathcal{Z}_k = (\eta^\delta,\omega^\delta,J^\delta)$ over $[k\delta,(k+1)\delta)$, conditioned
on $\eta^\delta(k\delta)$, $\omega^\delta(k\delta)$, and $J^\delta(k\delta)$. Proceeding as before, by Dynkin's formula,
there is a martingale $M_k$ such that
\begin{equation}
\label{eq:JVDerivL1}
\bar{J}_\delta(k\delta+t) = \bar{J}_\delta(k\delta) + M_k^\delta(t) + \int_{k\delta}^{k\delta+t} \mathcal{J}^\delta(s) \mathd s,\qquad 0 \leq t < \delta,
\end{equation}
where
\begin{multline*}
\mathcal{J}^\delta(t) = \frac1n \sum_{i=1}^n(1-J_i^\delta(t))(1-\omega^\delta_i(t))|q^+_i(\eta^\delta\circ\chi(t))-q^+_i(\rho^\delta\circ\chi(t))|+\\
\frac1n \sum_{i=1}^n(1-J_i^\delta(t))\omega^\delta_i(t)|q^-_i(\eta\circ\chi(t))-q^-_i(\rho^\delta\circ\chi(t))|.
\end{multline*}

\begin{lemma}
\label{lem:JVBarBound}
For any $T\geq0$, 
\begin{align}
\mathbb{E}\bar{J}_\delta(T) & \leq  n^{-1} (\|D^{\ast}q\|_{2,1}+\gamma_n)Te^{2 T\|D^{\ast}q\|_{1}}\label{eq:JVBarBound1}\\
\|\bar{J}_\delta(T)\|_{2} & \leq 2 n^{-1} (1 + (\|D^{\ast} q\|_{2,1} + \gamma_n)T) e^{3 T \|D^{\ast}q\|_1}.\label{eq:JVBarBound2}
\end{align}
\end{lemma}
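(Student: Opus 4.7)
The proof plan mirrors that of Lemma \ref{lem:JWBarBound}, with the main novelty being the need to splice the piecewise Markov structure of $(\eta^\delta, \omega^\delta)$ across successive lattice intervals. First I would glue together the Dynkin decompositions (\ref{eq:JVDerivL1}): summing over $k = 0, \dots, \lfloor t/\delta\rfloor - 1$ and appending the residual piece on $[\lfloor t/\delta\rfloor\delta, t]$ yields a global semimartingale decomposition
\[
\bar{J}_\delta(t) = M^\delta(t) + \int_0^t \mathcal{J}^\delta(s)\,\mathd s,
\]
where $M^\delta$ is the concatenation of the martingales $M_k^\delta$ with respect to a natural filtration refining within each interval.

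Next I would bound $\mathcal{J}^\delta(t)$ in essentially the same way as in Lemma \ref{lem:JWBarBound}, but through the intermediate step of $\omega^\delta \circ \chi(t)$. Splitting via the triangle inequality,
\[
|q_i^\pm(\eta^\delta \circ \chi(t)) - q_i^\pm(\rho^\delta \circ \chi(t))| \leq |q_i^\pm(\eta^\delta \circ \chi(t)) - q_i^\pm(\omega^\delta \circ \chi(t))| + |q_i^\pm(\omega^\delta \circ \chi(t)) - q_i^\pm(\rho^\delta \circ \chi(t))|,
\]
the first term is handled by the Mean Value Theorem and the observation that $\eta_j^\delta$ and $\omega_j^\delta$ differ only when $J_j^\delta = 1$, contributing at most $2\|D^\ast q\|_1 \mathbb{E}\bar{J}_\delta(\chi(t))$ after averaging over $i$; the second term is controlled by (\ref{eq:VFuncApprox}), contributing $n^{-1}(\|D^\ast q\|_{2,1} + \gamma_n)$. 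Together this gives
\[
\mathbb{E}\mathcal{J}^\delta(t) \leq 2\|D^\ast q\|_1 \mathbb{E}\bar{J}_\delta(\chi(t)) + n^{-1}(\|D^\ast q\|_{2,1} + \gamma_n),
\]
and the analogous inequality in $L^2$ follows from (\ref{eq:VFuncApprox}) applied with $r=2$.

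For (\ref{eq:JVBarBound1}), I would take expectations in the Dynkin decomposition, use monotonicity of $\bar{J}_\delta$ to replace $\bar{J}_\delta(\chi(s))$ by $\bar{J}_\delta(s)$ inside the integral, and close via Gronwall's inequality exactly as in (\ref{eq:JBarBound1}). For (\ref{eq:JVBarBound2}), I would take the $L^2$ norm in the Dynkin decomposition to obtain
\[
\|\bar{J}_\delta(t)\|_2 \leq \|M^\delta(t)\|_2 + \int_0^t \|\mathcal{J}^\delta(s)\|_2 \,\mathd s,
\]
and use the fact that $\bar{J}_\delta$ moves only by jumps of size $n^{-1}$ (so that the jumps of $M^\delta$ coincide with those of $\bar{J}_\delta$) to conclude $[M^\delta]_t = n^{-1}\bar{J}_\delta(t)$, whence $\|M^\delta(T)\|_2 \leq (n^{-1}\mathbb{E}\bar{J}_\delta(T))^{1/2}$, bounded via (\ref{eq:JVBarBound1}). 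A second application of Gronwall's inequality then yields (\ref{eq:JVBarBound2}).

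The principal obstacle is the careful bookkeeping around the piecewise Markov structure: verifying that the spliced process $M^\delta$ is genuinely a martingale with respect to a well-defined filtration, and that its quadratic variation accumulates cleanly across interval boundaries $k\delta$. Once the global semimartingale representation is established and the discrete time-shift $\chi$ is absorbed via monotonicity of $\bar{J}_\delta$, the remainder reduces to the same Gronwall arguments used for Lemma \ref{lem:JWBarBound}, and the constants come out identically because the $\omega^\delta$-to-$\rho^\delta$ approximation bound (\ref{eq:VFuncApprox}) matches (\ref{eq:WFuncApprox}).
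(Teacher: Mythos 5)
Your proposal is correct and follows the paper's overall strategy (Dynkin decomposition, splitting through the independent-site process $\omega^\delta$ via the Mean Value Theorem and (\ref{eq:VFuncApprox}), controlling the martingale through its predictable quadratic variation $n^{-1}\bar J_\delta$, and closing with Gronwall). The one place you diverge is in how you handle the piecewise structure: the paper works interval by interval, establishing the one-step recursion (\ref{eq:JL1Inter}), passing to $t\to\delta^-$, and then iterating over $k$ by discrete induction to reach (\ref{eq:JVBarBound1}); in the $L^2$ step it likewise keeps the martingales $M_l^\delta(\delta)$ separate and sums their second moments using the martingale-difference orthogonality (\ref{eq:JVMartSum}). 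You instead globalize the decomposition to a single semimartingale $M^\delta + \int_0^t\mathcal J^\delta$ and absorb the lag $\chi(s)$ using monotonicity of $\bar J_\delta$ (which is valid since each $J_i^\delta$ is a running supremum), reducing both estimates to standard continuous-time Gronwall inequalities. This is a cleaner route to identical constants; the price is that you must verify that the spliced $M^\delta$ really is a martingale with respect to the full filtration (which holds precisely because the $M_k^\delta$ increments form a martingale difference sequence, the very fact the paper invokes explicitly at (\ref{eq:JVMartSum})) and that its quadratic variation accumulates as $n^{-1}\bar J_\delta$ across boundaries — exactly the bookkeeping you flagged as the principal obstacle. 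Once that is pinned down, your simplification is sound and arguably tidier than the paper's induction.
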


\begin{proof}
As in the proof of Lemma \ref{lem:JWBarBound}, by the Mean Value Theorem and (\ref{eq:VFuncApprox}),
\begin{equation}
\label{eq:FancyJExp}
\mathbb{E}\mathcal{J}^\delta(t) \leq 2 \|D^\ast q\|_1 \mathbb{E}\bar{J}_\delta(\chi(t))
+ n^{-1}(\|D^{\ast}q\|_{2,1} + \|D^2 q\|_{1,1}),\qquad t \geq 0,
\end{equation}
and so by direct integration, (\ref{eq:JVDerivL1}) implies that for $0 \leq t < \delta$,
\begin{equation}
\label{eq:JL1Inter}
\mathbb{E}\bar{J}_\delta(k\delta+t) \leq (1 + 2 t\|D^{\ast}q\|_1) \mathbb{E}\bar{J}_\delta(k\delta) +  tn^{-1} (\|D^{\ast}q\|_{2,1} + \gamma_n).
\end{equation}
Since $\mathbb{E}\bar{J}_\delta(t)$ is continuous in $t$, taking $t \to \delta^{-}$, this implies
\[
\mathbb{E}\bar{J}_\delta((k+1)\delta) \leq (1 + 2 \delta\|D^{\ast}q\|_1) \mathbb{E}\bar{J}_\delta(k\delta) + \delta n^{-1} (\|D^{\ast}q\|_{2,1} + \gamma_n).
\]
Performing induction over $k$, with $\bar{J}_\delta(0) = 0$ gives
\[
\mathbb{E}\bar{J}_\delta(k\delta) = n^{-1} (\|D^{\ast}q\|_{2,1} + \gamma_n) k\delta e^{2 k\delta \|D^{\ast} q\|_1}.
\]
Together with (\ref{eq:JL1Inter}), this implies (\ref{eq:JVBarBound1}). 
We shall now prove (\ref{eq:JVBarBound2}). By iterating equation (\ref{eq:JVDerivL1}), for any $k\geq 0$
and $0\leq t < \delta$,
\[
\bar{J}_\delta(k\delta + t) = \int_0^{k\delta + t} \mathcal{J}^\delta(s) \mathd s + M_k^\delta(t) + \sum_{l=0}^{k-1}M_l^\delta(\delta).
\]
%so
%\[
%\|\bar{J}_V(kh + t)\|_{2} = \int_0^{kh + t} \|\mathcal{J}^V(s)\|_{2} \mathd s + \left\| M_k^V(t) + \sum_{l=0}^{k-1}M_l^V(h) \right\|_{2}.
%\]
It can be seen that $\{M_1^\delta(\delta),\dots,M_{k-1}^\delta(\delta),M_k^\delta(t)\}$ forms a martingale difference sequence relative
to the increasing sequence of $\sigma$-algebras $\mathcal{F}_k = \sigma(\eta^\delta(t),\omega^\delta(t),J^\delta(t),\ 0 \leq t \leq k\delta)$.
Therefore,
\begin{equation}
\label{eq:JVMartSum}
\left\lVert M_k^\delta(t) + \sum_{l=0}^{k-1} M_l^\delta(\delta)\right\rVert_2^2 = \mathbb{E}M_k^\delta(t)^2 + \sum_{l=0}^{k-1}
\mathbb{E}M_l^\delta(\delta)^2.
\end{equation}
Since $\mathbb{E}M_k^\delta(t)^2 = [ M_k^\delta]_t  = n^{-1} \left(\mathbb{E} \bar{J}_{\delta}(k\delta+t) - \mathbb{E}\bar{J}_\delta(k\delta) \right) $ for each $k$ and $0 \leq t \leq \delta$, (\ref{eq:JVBarBound1})  implies
\begin{align*}
\left\lVert M_k^\delta(t) + \sum_{l=0}^{k-1} M_l^\delta(\delta)\right\rVert_2 & \leq n^{-1}(\|D^{\ast} q\|_{2,1} + \gamma_n)^{1/2} (k\delta + t)^{1/2} e^{ (k\delta + t)\|D^{\ast}q\|_1} \\
& \leq n^{-1}(1+(\|D^{\ast} q\|_{2,1} + \gamma_n)(k\delta + t))  e^{(k\delta + t)\|D^{\ast}q\|_1} .
\end{align*}
%to control
%(\ref{eq:JVMartSum}), it suffices to consider $\langle M_k^V \rangle_t$. 
%By the
%same steps leading to (\ref{eq:JWMartBound}), for $0 \leq t < h$,
%\[
%\langle M_k^V \rangle_t \leq \frac1n \int_{kh}^{kh+t} \mathcal{J}^V(s) \mathd s,
%\]
%which from (\ref{eq:JVBarBound1}) and (\ref{eq:FancyJExp}) implies
%\begin{align*}
%\mathbb{E}\langle M_k^V \rangle_t &\leq 2 t n^{-1} \|D^{\ast} Q\|_1 \mathbb{E}\bar{J}_V(kh) + 2 t n^{-2} (\|D^{\ast}Q\|_{2,1} + \|D^2 Q\|_{1,1})\\
%&\leq 4 n^{-2} (\|D^{\ast}Q\|_{2,1} + \|D^2 Q\|_{1,1}) [(1+2 t\|D^{\ast}Q\|_1)k h e^{2kh\|D^{\ast}Q\|_1} + t - kh e^{2 kh \|D^{\ast}Q\|_1}] \\
%&\leq 4 n^{-2} (\|D^{\ast}Q\|_{2,1} + \|D^2 Q\|_{1,1}) [(kh + t) e^{2 (kh + t)\|D^{\ast}Q\|_1} - kh e^{2 kh \|D^{\ast}Q\|_1}].
%\end{align*}
%Therefore,
%\begin{align*}
%\left\lVert M_k^V(t) + \sum_{l=0}^{k-1} M_l^V(h)\right\rVert_2 &\leq 4 n^{-1}(1 + (\|D^{\ast} Q\|_{2,1} + \|D^2 Q\|_{1,1}) (kh + t)) e^{ (kh + t)\|D^{\ast}Q\|_1}.
%\end{align*}
On the other hand, equation (\ref{eq:VFuncApprox}) implies that
\[
\|\mathcal{J}^\delta(t)\|_2 \leq 2 \|D^{\ast}q\|_1 \|\bar{J}_\delta(\chi(t))\|_2
+ n^{-1}(\|D^{\ast}q\|_{2,1} + \gamma_n),\qquad t \geq 0.
\]
which altogether implies that for any $k\geq 0$ and $0\leq t < \delta$, % use discrete Gronwall inequality
\begin{multline*}
\|\bar{J}_\delta(k\delta + t)\|_2 \leq 2 \delta \|D^{\ast} q\|_1 \sum_{l=0}^{k-1} \|\bar{J}_\delta(l \delta)\|_2 + 2 t \|D^{\ast}q\|_1 \|\bar{J}_V(k \delta)\|_2 \\
+2 n^{-1}(1 + (\|D^{\ast} q\|_{2,1} + \gamma_n)(k\delta + t)) e^{(k\delta + t)\|D^{\ast}q\|_1}. 
\end{multline*}
By induction, we find that
\[
\|\bar{J}_\delta(k\delta)\|_2 \leq 2 n^{-1}(1 + (\|D^{\ast} q\|_{2,1} + \gamma_n)k \delta) e^{3 k\delta \|D^{\ast}q\|_1}
\]
which finally implies (\ref{eq:JVBarBound2}).
\end{proof}

As usual, the primary application of the independent site approximation is for obtaining a bound on the first- and second-order weak error between $\eta(t)$ and the deterministic approximation $\rho(t)$. A bound on the first-order weak error for vector-valued functions, which follows from
(\ref{eq:WFuncApprox}), (\ref{eq:VFuncApprox}), and Lemmas \ref{lem:JWBarBound} and \ref{lem:JVBarBound}, is presented in Lemma \ref{lem:JWBarBoundCor}.
\begin{lemma} \label{lem:JWBarBoundCor}
For any $f =(f_{1},\ldots,f_{m}) \in \mathcal{C}^{2}( [0,1]^{n} , \mathbb{R}^{m}) $ and any $ t > 0 $,
\begin{multline*}
\sum_{i=1}^{m} [\mathbb{E} \left| f_{i}(\eta(t)) - f_{i}(\rho(t)) \right| + \mathbb{E}|f_i(\eta^\delta(t)) - f_i(\rho^\delta(t))|] \\ \leq 2\|D f\|_{1}  (\|D^{\ast}q\|_{2,1}+\gamma_n)te^{2t\|D^{\ast}q\|_{1}} + \|Df\|_{2,1} + \sum_{i,j=1}^n \|\partial_j^2 f_i\|_{\infty}.
\end{multline*}
\end{lemma}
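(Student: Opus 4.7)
The plan is to use the triangle inequality to split each term through its independent site approximation, then apply (\ref{eq:WFuncApprox}) and (\ref{eq:VFuncApprox}) together with the $L^1$ bounds on $\bar{J}$ and $\bar{J}_\delta$ from Lemmas \ref{lem:JWBarBound} and \ref{lem:JVBarBound}. Concretely, for each $i=1,\dots,m$, I would write
\[
\mathbb{E}|f_i(\eta(t))-f_i(\rho(t))| \leq \mathbb{E}|f_i(\eta(t))-f_i(\omega(t))| + \mathbb{E}|f_i(\omega(t))-f_i(\rho(t))|,
\]
and similarly with $\eta^\delta$, $\omega^\delta$, $\rho^\delta$ for the second group of terms.

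For the \emph{stochastic-vs-independent} contribution, I would use the basic coupling. Since $\eta_j(t),\omega_j(t)\in\{0,1\}$, the Mean Value Theorem gives
\[
|f_i(\eta(t))-f_i(\omega(t))| \leq \sum_{j=1}^n \|\partial_j f_i\|_\infty \, \ind\{\eta_j(t)\neq\omega_j(t)\} \leq \sum_{j=1}^n \|\partial_j f_i\|_\infty \, J_j(t).
\]
Summing over $i$ and interchanging the order of summation, I would obtain
\[
\sum_{i=1}^m \mathbb{E}|f_i(\eta(t))-f_i(\omega(t))| \leq \|Df\|_1 \sum_{j=1}^n \mathbb{E}J_j(t) = \|Df\|_1 \cdot n\,\mathbb{E}\bar{J}(t),
\]
so that Lemma \ref{lem:JWBarBound} delivers a bound of $\|Df\|_1(\|D^\ast q\|_{2,1}+\gamma_n)t e^{2t\|D^\ast q\|_1}$. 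The exactly analogous argument for $\eta^\delta$, $\omega^\delta$ with Lemma \ref{lem:JVBarBound} produces another copy of the same bound, giving the factor of $2$ in the first term of the stated inequality.

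For the \emph{independent-vs-deterministic} contribution, I would apply (\ref{eq:WFuncApprox}) component-wise with $r=1$ to get
\[
\sum_{i=1}^m \mathbb{E}|f_i(\omega(t))-f_i(\rho(t))| \leq \tfrac12 \sum_{i=1}^m \Bigl(\sum_{j=1}^n \|\partial_j f_i\|_\infty^2\Bigr)^{1/2} + \tfrac12 \sum_{i=1}^m\sum_{j=1}^n \|\partial_j^2 f_i\|_\infty,
\]
which is exactly $\tfrac12 \|Df\|_{2,1} + \tfrac12 \sum_{i,j}\|\partial_j^2 f_i\|_\infty$, and the same estimate follows from (\ref{eq:VFuncApprox}) for the $\omega^\delta$–$\rho^\delta$ pair. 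Adding the two copies together absorbs the factor $\tfrac12$, yielding the last two terms of the stated inequality.

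The proof is essentially assembly: all analytic input is supplied by the two lemmas and the Lindeberg-style estimates (\ref{eq:WFuncApprox})–(\ref{eq:VFuncApprox}). The only mild subtlety is bookkeeping when moving from the $i$-indexed estimate $\sum_j \|\partial_j f_i\|_\infty J_j(t)$ to a quantity controlled by $\mathbb{E}\bar{J}(t)$; this is where the $\|Df\|_1$ norm (rather than $\|Df\|_\infty$) naturally appears, which I would highlight as the one step requiring care. No other significant obstacle is anticipated.
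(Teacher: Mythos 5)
Your proposal is correct and is exactly the assembly the paper intends: the paper itself gives no displayed proof for this lemma, stating only that it "follows from" (\ref{eq:WFuncApprox}), (\ref{eq:VFuncApprox}), and Lemmas \ref{lem:JWBarBound} and \ref{lem:JVBarBound}, and your triangle inequality through the independent-site approximations, the Mean Value Theorem bound in terms of $J_j(t)$, the observation $\sum_{i,j}\|\partial_j f_i\|_\infty J_j(t) \leq \|Df\|_1 \sum_j J_j(t) = \|Df\|_1 \, n\bar J(t)$, and the $r=1$ application of the Lindeberg estimates supply precisely that. The bookkeeping you flag as needing care (the appearance of the mixed norm $\|Df\|_1 = \max_j \sum_i \|\partial_j f_i\|_\infty$) is handled correctly, and the constants match the stated bound.
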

%\begin{proof}
%From inequality (\ref{eq:WFuncApprox}),
%\begin{align*}
%\mathbb{E} \left| f_{i}(\eta(t)) - f_{i}(p(t)) \right| & \leq \mathbb{E} \left| f_{i}(\eta(t)) - f_{i}(W(t)) \right| + \frac{1}{2}\left(\sum_{j=1}^{n}\left\lVert \partial_{j}f_{i}\right\rVert _{\infty}^{2}\right)^{1/2}+\frac{1}{2}\sum_{j=1}^{n}\left\lVert \partial_{j}^{2}f_{i}\right\rVert _{\infty} \\
%& \leq \sum_{j=1}^{n} \| \partial_{j} f_{i}\|_{\infty} \mathbb{E} J_{i}^{W} (t) + \frac{1}{2}\left(\sum_{j=1}^{n}\left\lVert \partial_{j}f_{i}\right\rVert _{\infty}^{2}\right)^{1/2}+\frac{1}{2}\sum_{j=1}^{n}\left\lVert \partial_{j}^{2}f_{i}\right\rVert _{\infty} .
%\end{align*}
%Therefore,
%\begin{align*}
%\sum_{i=1}^{m} \mathbb{E} \left| f_{i}(\eta(t)) - f_{i}(p(t)) \right| 
%% & \leq \sum_{i=1}^{m} \sum_{j=1}^{n} \| \partial_{j} f_{i}\|_{\infty} \mathbb{E} J_{j}^{W} (t) + \frac{1}{2}\sum_{i=1}^{m}\left(\sum_{j=1}^{n}\left\lVert \partial_{j}f_{i}\right\rVert _{\infty}^{2}\right)^{1/2}+\frac{1}{2}\sum_{i=1}^{m}\sum_{j=1}^{n}\left\lVert \partial_{j}^{2}f_{i}\right\rVert _{\infty} \\
%%& \leq \left( \max_{j} \sum_{i=1}^{m} \| \partial_{j} f_{i}\|_{\infty} \right) \sum_{j=1}^{n}  \mathbb{E} J_{j}^{W} (t) + \frac{1}{2}\sum_{i=1}^{m}\left(\sum_{j=1}^{n}\left\lVert \partial_{j}f_{i}\right\rVert _{\infty}^{2}\right)^{1/2}+\frac{1}{2}\sum_{i=1}^{m}\sum_{j=1}^{n}\left\lVert \partial_{j}^{2}f_{i}\right\rVert _{\infty} \\
%& \leq \|D f\|_{1} \sum_{j=1}^{n}  \mathbb{E} J_{j}^{W} (t) + \frac{1}{2}\|Df\|_{2,1} +\frac{1}{2}\|D^{2} f\|_{1,1}.
%\end{align*}
%Applying Lemma \ref{lem:JWBarBound} completes the proof.
%\end{proof}
Following the same arguments as \cite[Proposition 9]{HMP:18}, using
Lemmas \ref{lem:JWBarBound} and \ref{lem:JVBarBound} in place of \cite[Lemma 7]{HMP:18}, we arrive
at a bound on the second-order weak error, presented in Lemma
\ref{lem:JWBarBoundSecond}. For brevity, we introduce the notation
\[
\mathcal{T}_n(f) = \sqrt{n(1 + \log n)}\left[\sqrt{n}\max_{j,k=1,\dots,n}\|\partial_j \partial_k f\|_{\infty} + \max_{j=1,\dots,n}\sum_{k=1}^n \|\partial_j \partial_k^2 f\|_{\infty}\right].
\]

\begin{lemma}
\label{lem:JWBarBoundSecond}
There is a universal constant $ C>0 $ such that, for any $ f \in \mathcal{C}([0,1]^{n},\mathbb{R}) $ and $ t > 0 $,
\begin{align*}
\mathbb{E} \left| f(\eta(t)) - f(\rho(t)) - \sum_{j=1}^{n} \partial_{j} f(\rho(t)) (\eta_{j}(t) - \rho_{j}(t)) \right|
&\leq C (1 + n\|\bar{J}\|_2^2) \mathcal{T}_n(f) \\
\mathbb{E} \left| f(\eta^\delta(t)) - f(\rho^\delta(t)) - \sum_{j=1}^{n} \partial_{j} f(\rho^\delta(t)) (\eta^\delta_{j}(t) - \rho^\delta(t)) \right|
&\leq C (1 + n\|\bar{J}_{\delta}\|_2^2) \mathcal{T}_n(f).
\end{align*}
\end{lemma}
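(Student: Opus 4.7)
The plan is to import the argument of \cite[Proposition 9]{HMP:18} essentially verbatim, with our Lemmas \ref{lem:JWBarBound} and \ref{lem:JVBarBound} substituting for their \cite[Lemma 7]{HMP:18}. Since both lemmas provide structurally identical first- and second-moment control for $\bar J$ and $\bar J_\delta$, the two displayed inequalities follow in parallel, so I will describe the argument for the $(\eta,\rho)$ pair only; the tau-leaped version goes through line-by-line with (\ref{eq:VFuncApprox}) playing the role of (\ref{eq:WFuncApprox}).

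The first step is to insert the independent-site approximation $\omega$, coupled basically to $\eta$, and decompose the quantity in question as $A(t) + B(t)$, where
\begin{align*}
A(t) &\coloneqq f(\eta(t)) - f(\omega(t)) - \sum_{j=1}^n \partial_j f(\rho(t))(\eta_j(t)-\omega_j(t)), \\
B(t) &\coloneqq f(\omega(t)) - f(\rho(t)) - \sum_{j=1}^n \partial_j f(\rho(t))(\omega_j(t)-\rho_j(t)).
\end{align*}
For $A(t)$, Taylor-expand $f(\eta)$ about $\omega$ and rewrite $\partial_j f(\omega)-\partial_j f(\rho)$ as an integral of second derivatives. Every surviving term then carries an indicator $\ind\{\eta_j\neq\omega_j\}$, and Cauchy--Schwarz collapses the sums to $\bar J(t)^2$ multiplied by $\max_{j,k}\|\partial_j\partial_k f\|_\infty$. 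The $L^2$ estimate of Lemma \ref{lem:JWBarBound} then contributes a term of order $n\|\bar J\|_2^2 \,\sqrt{n}\,\max_{j,k}\|\partial_j\partial_k f\|_\infty$, which sits inside the $n\|\bar J\|_2^2 \,\mathcal{T}_n(f)$ budget.

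For the independent-site term $B(t)$, a second-order Taylor expansion yields
\[
B(t) = \tfrac{1}{2}\sum_{j,k=1}^n \partial_j\partial_k f(\rho(t))(\omega_j(t)-\rho_j(t))(\omega_k(t)-\rho_k(t)) + R(t),
\]
with $R(t)$ a cubic Taylor remainder. Because $\omega - \rho$ has independent mean-zero coordinates, applying (\ref{eq:WFuncApprox}) to each partial derivative $\partial_j f$ concentrates the quadratic form at rate $\sqrt{n}\,\max_{j,k}\|\partial_j\partial_k f\|_\infty$, while the cubic remainder is absorbed into the third-derivative term $\max_j \sum_k \|\partial_j \partial_k^2 f\|_\infty$. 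The extra $\sqrt{1+\log n}$ factor present in $\mathcal{T}_n(f)$ arises from a union bound over the $n$ coordinates where, in the source argument, one promotes an $L^2$ concentration estimate to a pointwise one.

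The principal obstacle is the bookkeeping: verifying that every use of \cite[Lemma 7]{HMP:18} in the source proof admits a drop-in replacement by Lemmas \ref{lem:JWBarBound} and \ref{lem:JVBarBound} without loss of order, and that a single universal constant $C$ can be chosen to serve both inequalities. Quantifying the concentration of the quadratic form in $B(t)$ is the technical heart of the argument; the rest is largely accounting.
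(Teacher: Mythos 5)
Your proposal is correct and takes exactly the paper's route: the paper's entire proof is the one-sentence remark ``Following the same arguments as [HMP:18, Proposition 9], using Lemmas~\ref{lem:JWBarBound} and \ref{lem:JVBarBound} in place of [HMP:18, Lemma 7],'' which is precisely the opening move of your argument, and the remainder of your proposal is a plausible sketch of the internal steps of that cited proposition (the $A$/$B$ decomposition through $\omega$, Taylor expansion yielding $\bar J^2$ terms controlled by Lemma~\ref{lem:JWBarBound}, and independence/concentration for $\omega-\rho$ producing the $\sqrt{n(1+\log n)}$ factor). No discrepancy worth flagging; the paper simply does not spell out these details and you have filled them in consistently.
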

%\begin{proof}
%Using the independent site coupling and a Taylor expansion of $ f $,
%\begin{align*}
%\mathbb{E} \left|f(\eta(t)) -f(W(t)) - \sum_{j=1}^{n} \partial_{j} f(W(t)) (\eta_j(t)) - W_{j}(t))  \right|  \leq n^{2} \max_{j,k} \|\partial_{j} \partial_{k} f\|_{\infty} \| \bar{J}_{W}(t)\|_{2}^{2}.
%\end{align*}
%\begin{align*}
%\mathbb{E} \left| \sum_{j=1}^{n} \left[\partial_{j} f(W(t)) - \partial_{j} f(p(t)) \right] \left[ \eta_j(t) - W_{j}(t) \right] \right| \leq \left( \mathbb{E} \max_{j} \left[ \partial_{j} f (W(t)) - \partial_{j}f(p(t)) \right]^{2} \right)^{1/2} \cdot n \|\bar{J}_{W} (t)\|_{2}
%\end{align*}
%\end{proof}

\section{Proofs}
\label{sec:Proofs}
\subsection{Strong error analysis}
We begin by providing proofs of the strong error bounds presented in Theorems \ref{thm:EulerRate} and \ref{thm:MidPointRate}. 
\begin{proof}[Proof of Theorem \ref{thm:EulerRate}]
As will be a recurring theme for all of the main results to be presented, 
the method of proof is centered about
an application of Gronwall's inequality. In this case, the application is
fairly standard; first, by decomposing
\[
\sum_{i=1}^n \mathbb{E} |\eta_i(t) - \eta_i^\delta(t)| \leq \mathbb{E} (T_+^{(1)} + T_+^{(2)} + T_-^{(1)} + T_-^{(2)}),
\]
where each of the terms are given by
\begin{align*}
T_{+}^{(1)}&\coloneqq \sum_{i=1}^n\int_{0}^{t}\left|(1-\eta_{i}(s))q_{i}^+(\eta(s))-(1-\eta^\delta_{i}(s))q^+_{i}(\eta^{\delta}(s))\right| \mathd s \\
T_{+}^{(2)}&\coloneqq \sum_{i=1}^n\int_{0}^{t}\left|(1-\eta^{\delta}_{i}(s))(q^+_{i}(\eta^{\delta}(s))-q^+_{i}(\eta^{\delta}\circ\chi(s)))\right| \mathd s \\
T_{-}^{(1)}&\coloneqq \sum_{i=1}^n\int_{0}^{t}\left|\eta_{i}(s)q^-_{i}(\eta(s))-\eta^{\delta}_{i}(s)q^-_{i}(\eta^{\delta}(s))\right| \mathd s \\
T_{-}^{(2)}&\coloneqq \sum_{i=1}^n\int_{0}^{t}\left|\eta^{\delta}_{i}(s)(q_{i}^-(\eta^{\delta}(s))-q_{i}^-(\eta^{\delta}\circ\chi(s)))\right| \mathd s.
\end{align*}
Treating $T_+^{(1)}$ and $T_+^{(2)}$ (the arguments for $T_-^{(1)}$ and $T_-^{(2)}$ are similar), the Mean Value Theorem implies
\begin{align*}
T_+^{(1)} %&\leq \sum_{i=1}^n \int_0^t |q_i^+(X(s)) - q_i^+(\eta^\delta(s))| + |q_i^+(\eta^\delta(s))| |\eta_i(s) - \eta^\delta_i(s)| \mathd s\\
&\leq (\|q^+\|_{\infty} + \|Dq^+\|_1) \int_0^t \sum_{i=1}^n |\eta_i(s) - \eta^{\delta}_i(s)| \mathd s, \\
%and similarly for $T_C^{(2)}$, 
T_+^{(2)} &\leq \|Dq^+\|_1 \int_0^t \sum_{i=1}^n |\eta^{\delta}_i(s) - \eta^{\delta}_i \circ \chi(s)| \mathd s. 
\end{align*}
Since $|s - \chi(s)| \leq \delta$ for any $s \geq 0$, $\mathbb{E}|\eta^{\delta}_i(s) - \eta^{\delta}_i \circ \chi(s)|$ is bounded above by $2\|q\|_{\infty} \delta$.
%\begin{multline*}
%\mathbb{E}\left| \int_{\chi(s)}^s
%(1 - \eta^{\delta}_i(u)) q^+_i(\eta^{\delta} \circ \chi(u)) \mathd u\right|
%+ \mathbb{E}\left| \int_{\chi(s)}^s \eta^{\delta}_i(u) q^-_i(\eta^{\delta} \circ \chi(u)) \mathd u\right|\\
% \leq (\|q^+\|_{\infty} + \|q^-\|_{\infty}) h \leq 2 \|q\|_{\infty} \delta.
%\end{multline*}
%\begin{align*}
%\mathbb{E}|\eta^{\delta}_i(s) - \eta^{\delta}_i \circ \chi(s)| &\leq \mathbb{E}\left| \int_{\chi(s)}^s
%(1 - \eta^{\delta}_i(u)) q^+_i(\eta^{\delta} \circ \chi(u)) \mathd u\right|
%+ \mathbb{E}\left| \int_{\chi(s)}^s \eta^{\delta}_i(u) q^-_i(\eta^{\delta} \circ \chi(u)) \mathd u\right|\\
%& \leq (\|q^+\|_{\infty} + \|q^-\|_{\infty}) h \leq 2 \|q\|_{\infty} \delta.
%\end{align*}
Altogether, this implies
\begin{multline*}
\sum_{i=1}^n \mathbb{E}|\eta_i(t) - \eta^{\delta}_i(t)| \leq 2(\|q\|_{\infty} + \|D^{\ast}q\|_{1}) \int_0^t \sum_{i=1}^n \mathbb{E}|\eta_i(s) - \eta^{\delta}_i(s)| \mathd s \\ + 4\|q\|_{\infty} \|D^{\ast}q\|_1 n \delta t,
\end{multline*}
and applying Gronwall's inequality completes the proof.
\end{proof}

\begin{proof}[Proof of Theorem \ref{thm:MidPointRate}]
We proceed as in the proof of Theorem \ref{thm:EulerRate}, but now
\begin{align*}
T_+^{(2)} &\coloneqq \sum_{i=1}^n \left|\int_0^t (1 - \mathring{\eta}^{\delta}_i(s))(q^+_i(\mathring{\eta}^{\delta}(s)) - q^+_i(p^{\delta} \circ \mathring{\eta}^{\delta}\circ \chi(s)))\mathd s\right| \\
T_-^{(2)} &\coloneqq \sum_{i=1}^n \left|\int_0^t \mathring{\eta}^{\delta}_i(s)(q^-_i(\mathring{\eta}^{\delta}(s)) - q^-_i(p^{\delta} \circ \mathring{\eta}^{\delta}\circ \chi(s)))\mathd s\right|.
\end{align*}
To apply Gronwall's inequality as before, it is necessary to bound the expectation of these terms. In fact, it will suffice to consider $T_+^{(2)}$ alone, as the procedure for $T_-^{(2)}$ is virtually identical. Let \[U_i(s) \coloneqq~q^+_i(\mathring{\eta}^{\delta}(s)) - q^+_i(p^{\delta} \circ \mathring{\eta}^{\delta} \circ \chi(s)).\] Applying a first order Taylor expansion to $q^+_i$ about $ \mathring{\eta}^{\delta}\circ\chi(s) $, 
\begin{multline*}
U_i(s) 
%&\coloneqq q_i^+(\mathcal{Z}(s)) - q_i^+(\rho \circ \mathcal{Z} \circ \eta(s)) \\
%&= q_i^+(\mathcal{Z}(s)) - q_i^+(\mathcal{Z} \circ \eta(s)) - \frac{h}{2}\sum_{j=1}^n \partial_j q_i^+(\mathcal{Z}\circ\eta(s)) Q_j(\mathcal{Z}\circ \eta(s)) + R_C^i \\
%& = \left\{q_i^+(\mathcal{Z}\circ\eta(s)) + \sum_{j=1}^{n} \partial_{j} q_i^+ (\mathcal{Z}\circ\eta(s)) ( \mathcal{Z}_{j}(s) - \mathcal{Z}_{j} \circ\eta(s))  + \frac{1}{2} \sum_{j=1}^{n} \sum_{k=1}^{n} \partial_{j} \partial_{k} q_i^+(-) (\mathcal{Z}_{j}(s) - \mathcal{Z}\circ\eta(s))(\mathcal{Z}_{k}(s) - \mathcal{Z}_{k}\circ\eta(s)) \right\} \\
%& - q_i^+(\mathcal{Z} \circ \eta(s)) - \frac{h}{2}\sum_{j=1}^n \partial_j q_i^+(\mathcal{Z}\circ\eta(s)) Q_j(\mathcal{Z}\circ \eta(s)) + R_C^i  \\
=  \sum_{j=1}^{n} \partial_{j} q^+_{i}(\mathring{\eta}^{\delta}\circ\chi(s)) \left(\mathring{\eta}^{\delta}_{j}(s) - \mathring{\eta}^{\delta}_{j}\circ\chi(s) - \frac{\delta}{2} Q_{j}(\mathring{\eta}^{\delta}\circ\chi(s)) \right) \\ + R_{i,1} + R_{i,2},
\end{multline*}
where $ R_{i,1} $ and $ R_{i,2}$  are the remainders from the expansion of $ q^+_{i}(p^{\delta}\circ\mathring{\eta}^{\delta}\circ\chi(s)) $ and $ q^+_{i}(\mathring{\eta}^{\delta}(s)) $, respectively. Note that $ |R_{i,1}| \leq  \delta^2 \|q\|_{\infty}^2 \sum_{j,k=1}^n\|\partial_j\partial_k q^+_i\|_{\infty}$. For $R_{i,2}$, %observing that for each $j=1,\dots,n$,
\begin{align*}
\mathbb{E} \left|R_{i,2}\right| & \leq \sum_{j,k=1}^{n}\|\partial_{j} \partial_{k} q^+_{i}\|_{\infty} \mathbb{E} \left|(\mathring{\eta}^{\delta}_{j}(s) - \mathring{\eta}^{\delta}_{j}\circ\chi(s))(\mathring{\eta}^{\delta}_{k}(s) - \mathring{\eta}^{\delta}_{k}\circ\chi(s)) \right|. %\\
%& \leq \mcv{4} h^{2} \|q\|_{\infty}^{2} \sum_{\substack{j,k=1\\j\neq k}}^{n}  \|\partial_{j} \partial_{k} q_i^+\|_{\infty} + \mcv{2} h\|q\|_{\infty} \sum_{j=1}^{n} \|\partial_{j}^{2} q_i^+ \|_{\infty}.
\end{align*}
From (\ref{model:eq6}), the number of jumps in $ \mathring{\eta}^{\delta}_{i} $ on $[a,b] $ is stochastically dominated by a Poisson random variable with mean $  2 \|q\|_{\infty} (b-a) $ so  for each $j=1,\dots,n$,
\begin{equation*}
\mathbb{E}\left[ \left.|\mathring{\eta}^{\delta}_j(s) - \mathring{\eta}^{\delta}_j \circ \chi(s)|\; \right|\; \mathring{\eta}^{\delta} \circ \chi(s) \right]
%\leq \mathbb{P}\left(\left.\sum_{u\in[\chi(s),s]}\Delta\mathring{\eta}^{\delta}_j(u)\geq 1 \ \right|  \mathring{\eta}^{\delta}\circ\chi(s)\right) \\
\leq 1 - e^{-2\delta\|q\|_{\infty}} \leq 2\delta\|q\|_{\infty}.
\end{equation*}
Since the $ \mathring{\eta}^{\delta}_{j}(s) $ are independent conditioned on $ \mathring{\eta}^{\delta}\circ\chi(s) $, it follows
\begin{align*}
\mathbb{E} \left|R_{i,2}\right| %& \leq \sum_{j,k=1}^{n}\|\partial_{j} \partial_{k} q_i^+\|_{\infty} \mathbb{E} \left|(\mathcal{Z}_{j}(s) - \mathcal{Z}_{j}\circ\eta(s))(\mathcal{Z}_{k}(s) - \mathcal{Z}_{k}\circ\eta(s)) \right| \\
& \leq 4 \delta^{2} \|q\|_{\infty}^{2} \sum_{\substack{j,k=1\\j\neq k}}^{n}  \|\partial_{j} \partial_{k} q^+_{i}\|_{\infty} + 2 \delta\|q\|_{\infty} \sum_{j=1}^{n} \|\partial_{j}^{2} q^+_{i} \|_{\infty}.
\end{align*}
For each $i=1,\dots,n$, define
\[
\tilde{U}_{i}(s) \coloneqq \left(s-\chi(s) - \frac{\delta}{2}\right) \sum_{j=1}^{n} \partial_{j} q^+_{i}(\mathring{\eta}^{\delta}\circ\chi(s)) q_{j}(\mathring{\eta}^{\delta}\circ\chi(s)).
\]
Since $ \int^{\chi(t)+\delta}_{\chi(t)} |s- \chi(s) - \delta/2| \mathd s  = \delta^{2}/4 $ and $ \int^{\chi(t)+\delta}_{\chi(t)} (s- \chi(s) - \delta/2) \mathd s  = 0 $ for any bounded function $ g $ and any $ t > 0 $,
\[
\left| \int^{t}_{0} (1-\mathring{\eta}^{\delta}_{i} (s)) \left( s- \chi(s) - \frac{\delta}{2}\right) g(\chi(s))  \mathd s \right| \leq \delta^{2} \|g\|_{\infty} \left(1+\sum_{s\in[0,t]}\Delta\mathring{\eta}^{\delta}_i(s)\right).
\]
Again using the fact that the expected number of jumps in $ \mathring{\eta}^{\delta}_{i} $ on $[a,b] $ is bounded by $  2 \|q\|_{\infty} (b-a) $,
\begin{align}
\mathbb{E}\left| \int^{t}_{0} (1-\mathring{\eta}^{\delta}_{i}(s)) \tilde{U}_i(s) \mathd s \right| \leq 2 (t+1) \delta^{2} \|q\|_{\infty} (1 + \|q\|_{\infty} ) \sum_{j=1}^{n} \|\partial_{j} q^+_{i}\|_{\infty} \label{proof:mid:eq2b}.
% h^{2} \|DC\|_{\infty}\|q\|_{\infty}(1 + \|q\|_{\infty} t). \label{proof:mid:eq2b}
\end{align}
Therefore, to control $T_+^{(2)}$, it now suffices to consider the remaining error term 
\[
\mathbb{E}\left| \int^{t}_{0} (1-\mathring{\eta}^{\delta}_{i}(s)) (U_{i}(s) - \tilde{U}_{i}(s)) \mathd s \right| \leq t \sup_{s \in [0,t]} \mathbb{E}|U_i(s) - \tilde{U}_i(s)|.
\]
To do so, let $M(t)$ denote the martingale
\begin{multline*}
M_{i}(t) =\mathring{\eta}^{\delta}_{i}(t) - \int^{t}_{0}((1-\mathring{\eta}^{\delta}_{i}(s))q^+_{i}(p^{\delta}\circ\mathring{\eta}^{\delta}\circ\chi(s)) \\- \mathring{\eta}^{\delta}_{i}(s) q^-_{i}(p^{\delta}\circ\mathring{\eta}^{\delta}\circ\chi(s))) \mathd s.
\end{multline*}
In terms of $M(t)$, the difference $U_i(s) - \tilde{U}_i(s) = T_{i,1}+T_{i,2}-T_{i,3}-T_{i,4}+R_{i,1}+R_{i,2}$, where
{\small\begin{align*}
%U_i(s) - \tilde{U}_i(s) & = \sum_{j=1}^{n} \partial_{j} q_i^+(\mathcal{Z}\circ\eta(s)) (\tilde{\mathcal{Z}}_{j}(s) - \tilde{\mathcal{Z}}_{j}\eta(s)) + R_{1} + R_{2}  \\
%& + \sum_{j=1}^{n} \partial_{j} q_i^+(\mathcal{Z}\circ\eta(s)) \int^{s}_{\eta(s)} \left((1-\mathcal{Z}_{j}(u))C_{j}(\rho\circ\mathcal{Z}\circ\eta(u)) - \mathcal{Z}_{j}(u) E_{j}(\rho\circ\mathcal{Z}\circ\eta(u)) \right) \mathd u \\
%& - \sum_{j=1}^{n} \partial_{j} q_i^+(\mathcal{Z}\circ\eta(s)) \int^{s}_{\eta(s)} \left((1-\mathcal{Z}_{j}\circ\eta(u))C_{j}(\mathcal{Z}\circ\eta(u)) - \mathcal{Z}_{j}\circ\eta(u) E_{j}(\mathcal{Z}\circ\eta(u)) \right) \mathd u \\
T_{i,1} & = \sum_{j=1}^{n} \partial_{j} q^+_{i}(\mathring{\eta}^{\delta}\circ\chi(s)) (M_{j}(s) - M_{j}\circ\chi(s)) \\
T_{i,2} & = \sum_{j=1}^{n} \partial_{j} q^+_{i}(\mathring{\eta}^{\delta}\circ\chi(s)) \int^{s}_{\chi(s)}(1-\mathring{\eta}^{\delta}_{j}(u)) \left[q^+_{j}(p^{\delta}\circ\mathring{\eta}^{\delta}\circ\chi(u)) - q^+_{j}(\mathring{\eta}^{\delta}\circ\chi(u))\right] \mathd u \\
T_{i,3} & = \sum_{j=1}^{n} \partial_{j} q^+_{i}(\mathring{\eta}^{\delta}\circ\chi(s)) \int^{s}_{\chi(s)} \mathring{\eta}^{\delta}_{j}(u) \left[q^-_{j}(p^{\delta}\circ\mathring{\eta}^{\delta}\circ\chi(u)) - q^-_{j}(\mathring{\eta}^{\delta}\circ\chi(u))\right] \mathd u  \\
T_{i,4} & = \sum_{j=1}^n \partial_j q^+_i(\mathring{\eta}^{\delta}\circ\chi(s))[q^+_j(\mathring{\eta}^{\delta}\circ\chi(s))+q^-_j(\mathring{\eta}^{\delta}\circ\chi(s))]
\int_{\chi(s)}^s [\mathring{\eta}^{\delta}_j(u)-\mathring{\eta}^{\delta}_j\circ\chi(u)] \mathd u.  
\end{align*} }
First, to bound $T_{i,1}$, let $M^+_i(s) \coloneqq \sum_{j=1}^n \partial_j q^+_i(\mathring{\eta}^{\delta}\circ\chi(s))M_j(s)$, so that
\[
\mathbb{E}T_{i,1}^2 = \mathbb{E}[M^+_i(s)-M^+_i\circ \chi(s)]^2
= \mathbb{E}[M^+_i]_s - \mathbb{E}[M^+_i]_{\chi(s)}.
\]
Note that the quadratic variation of $ M_{j} $, (resp. $M^+_i$), is just the number of jumps in $M_{j} $ (resp. $M^+_i$) and that, recalling the defining property of a spin system,  with probability one, no two sites experience simultaneous jumps in $ \mathring{\eta}^{\delta} $. Therefore, $\mathbb{E}[M^+_i]_s - \mathbb{E}[M^+_i]_{\chi(s)} \leq 2 \delta \|q\|_{\infty} \sum_{j=1}^n \|\partial_j q^+_i\|_{\infty}^2$,
%\begin{align*}
%\mathbb{E}[M^+_i]_s - \mathbb{E}[M^+_i]_{\chi(s)} &= \mathbb{E}\sum_{j=1}^n \partial_j q^+_i(\mathring{\eta}^{\delta}\circ\chi(s))^{2} \sum_{u\in[\chi(s),s]}(\Delta M_j(u))^2
%\\&\leq 2 \delta \|q\|_{\infty} \sum_{j=1}^n \|\partial_j q^+_i\|_{\infty}^2,
%\end{align*}
and so
\begin{equation}
(\mathbb{E}|T_{i,1}|)^2 \leq 2\delta\|q\|_{\infty}  \sum_{j=1}^{n} \| \partial_{j} q^+_{i} \|_{\infty}^{2}. \label{proof:mid:eq4}
\end{equation}
The terms $T_{i,2}$ and $T_{i,3}$ are controlled together: by the Mean Value Theorem, 
\begin{align*}
|q^+_j(p^{\delta} \circ \mathring{\eta}^{\delta} \circ \chi(s)) - q^+_j(\mathring{\eta}^{\delta}\circ \chi(s))|
%&\leq \tfrac12 \|DC\|_{\infty} \|q\|_{\infty} h \\
&\leq \frac12 \delta \|q\|_{\infty} \sum_{k=1}^{n} \|\partial_{k} q^+_{j}\|_{\infty}, %\\
%|q^-_j(p^{\delta} \circ \mathring{\eta}^{\delta} \circ \chi(s)) - q^-_j(\mathring{\eta}^{\delta}\circ \chi(s))|
%%&\leq \tfrac12 \|DE\|_{\infty} \|q\|_{\infty} h,
%&\leq \frac12 \delta \|q\|_{\infty} \sum_{k=1}^{n} \|\partial_{k} q^-_{j}\|_{\infty},
\end{align*}
and similarly for $q_j^-$, which implies
\begin{align}
\mathbb{E}|T_{i,2}| + \mathbb{E}|T_{i,3}| & \leq \frac12 \delta^{2} \|q\|_{\infty} \sum_{j,k=1}^{n} \|\partial_{j} q^+_{i}\|_{\infty} \left( \|\partial_{k} q^+_{j}\|_{\infty} + \|\partial_{k} q^-_{j}\|_{\infty} \right). \label{proof:mid:eq5}
\end{align}
Finally, to bound $T_{i,4}$,
\begin{align}
\mathbb{E}|T_{i,4}| & \leq 2\|q\|_{\infty} \sum_{j=1}^{n} \|\partial_{j} q^+_{i}\|_{\infty}  \int^{s}_{\chi(s)} \mathbb{E}| \mathring{\eta}^{\delta}_{j}(u) - \mathring{\eta}^{\delta}_{j}\circ\chi(u) | \mathd u \nonumber \\
& \leq 2\delta \|q\|_{\infty} \sum_{j=1}^{n} \|\partial_{j} q^+_{i}\|_{\infty} \  \mathbb{E} \sum_{u \in [\chi(s),s]} \Delta \mathring{\eta}^{\delta}_j(u) \leq 4 \delta^{2} \|q\|^{2}_{\infty} \sum_{j=1}^{n} \|\partial_{j} q^+_{i}\|_{\infty}.\label{proof:mid:eq6}
\end{align}
Combining (\ref{proof:mid:eq2b})-(\ref{proof:mid:eq6}) with the bounds on $ R_{i,1} $ and $ R_{i,2} $ gives
\begin{multline*}
\mathbb{E}|T_+^{(2)}|
\leq 5(t+1)n \delta^2\|q\|_{\infty}(1+\|q\|_{\infty})[\|D^{\ast}q\|_1 + \Gamma_n + \|D^{\ast}q\|_1^2] \\+ 2 \delta t \|q\|_{\infty} \gamma_n + \sqrt{2} \delta^{1/2} \|q\|_{\infty}^{1/2} t  \|D^{\ast} q\|_{2,1}
\end{multline*}
%\begin{multline*}
%\mathbb{E} \sum_{i=1}^{n} \left| \int^{t}_{0} (1-\mathcal{Z}_{i}(s)) (U_{i}(s) - \tilde{U}_{i}(s)) \mathd s \right| 
%& \leq n\frac{t}{8} h^2 \|q\|_{\infty}^2 \sum_{j=1}^n \sum_{k=1}^{n} \|\partial_j\partial_k q_i^+\|_{\infty} \\
%& +  nth^{2} \|q\|_{\infty}^{2} \sum_{j=1}^{n} \sum_{k\neq j} \|\partial_{j} \partial_{k} q_i^+\|_{\infty} + t h\|q\| \sum_{j=1}^{n} \|\partial_{j}^{2} q_i^+ \| \\
%& + nt \left( 2h\|q\|_{\infty}  \sum_{j=1}^{n} \| \partial_{j} q_i^+ \|_{\infty}^{2}   \right)^{1/2} \\
%& + nt \|DC\|_{\infty} \|DQ\|_{\infty} \|q\|_{\infty} h^2 \\
%& + 2nt h^{2} \|q\|^{2}_{\infty} \sum_{j=1}^{n} \|\partial_{j} q_i^+\|_{\infty}  \mathbb{E} J(0,t;\mathcal{Z}_{j})
%\mathbb{E}\left|T_{C}^{(2)}\right| 
%\leq 2(t+1) h^{2} \|q\|_{\infty}(1+\|q\|_{\infty}) \left[\sum_{i,j=1}^n \|\partial_j q_i^+\|_{\infty} + \sum_{i,j,k=1}^{n}\|\partial_{j} \partial_{k} q_i^+\|_{\infty}\right] \\+ th\|q\|_{\infty} \sum_{i,j=1}^{n} \|\partial^{2}_{j} q_i^+ \|_{\infty} 
%+ \sqrt{2} t h^{1/2} \|q\|_{\infty}^{1/2}  \sum_{i=1}^{n} \left( \sum_{j=1}^{n} \|\partial_{j} q_i^+\|^{2}_{\infty} \right)^{1/2} \\
%+ \tfrac12 t h^{2} \|q\|_{\infty} \sum_{i,j,k=1}^{n} \|\partial_{j} q_i^+ \|_{\infty} \left[ \|\partial_{k} C_{j}\|_{\infty} + \|\partial_{k} E_{j}\|_{\infty} \right],
%\end{multline*}
and so $\mathbb{E}|T_+^{(2)}| \leq 5 \alpha(n,\delta) (1 + t)$.
Applying the same arguments to $ T^{(2)}_{-} $ yields 
\begin{multline*}
\sum_{i=1}^n \mathbb{E}|\eta_i(t) - \mathring{\eta}^{\delta}_i(t)| \leq 2(\|q\|_{\infty} + \|D^{\ast}q\|_1)\int_0^t \sum_{i=1}^n \mathbb{E}|\eta_i(s)-\mathring{\eta}^{\delta}_i(s)|\mathd s
\\+ 10 \alpha(n,\delta)(1 + t).
\end{multline*}
The proof is completed by applying Gronwall's inequality as in the proof of Theorem \ref{thm:EulerRate}.
\end{proof}

\subsection{Proof of Theorem \ref{thm:EulerExact}}
\label{sec:EulerExactProof}
Throughout this proof, for the sake of brevity, the notation
 $x \lesssim y$ will be used to imply there exists
some universal constant $c > 0$ such that $x \leq c y$. Naturally, we assume throughout that the hypotheses of Theorem \ref{thm:EulerExact} hold. 
We begin by coupling $ \eta(t) $ and $ \eta^\delta(t) $ using the basic coupling, that is, for independent unit-rate Poisson processes $N_{i,k}^{+}, N_{i,k}^{-}$
with $i=1,\dots,n;\ k=1,2,3$:
\begin{align*}
\eta_{i}(t) 
 & = \eta_{i}(0) + N_{i,1}^{+} \left(\int^{t}_{0}  [(1-\eta_{i}(s)) q^+_{i}(\eta(s))  \wedge (1-\eta^\delta_{i}(s) q^+_{i}(\eta^\delta \circ \chi(s))]  \mathd s \right) \\
& + N_{i,2}^{+} \left(\int^{t}_{0} [ (1-\eta_{i}(s))q^+_{i}(\eta(s))  -  (1-\eta^\delta_{i}(s)) q^+_{i}(\eta^\delta \circ \chi(s)) ]_{+}  \mathd s \right) \\
& - N_{i,1}^{-}\left(\int^{t}_{0} [\eta_{i}(s)q^-_{i}(\eta(s)) \wedge \eta^\delta_{i}(s) q^-_{i}(\eta^\delta\circ\chi(s))] \mathd s \right)  \\
& - N_{i,2}^{-}\left(\int^{t}_{0} [\eta_{i}(s)q^-_{i}(\eta(s)) - \eta^\delta_{i}(s) q^-_{i}(\eta^\delta\circ\chi(s))] _{+} \mathd s \right)
\end{align*}
and
\begin{align*}
\eta^\delta_{i}(t) 
 & = \eta_{i}(0) + N_{i,1}^{+} \left(\int^{t}_{0}  \left[(1-\eta_{i}(s)) q^+_{i}(\eta(s))  \wedge (1-\eta^\delta_{i}(s) q^+_{i}(\eta^\delta \circ \chi(s))\right]  \mathd s \right) \\
& + N_{i,3}^{+} \left(\int^{t}_{0} [ (1-\eta^\delta_{i}(s)) q^+_{i}(\eta^\delta \circ \chi(s)) - (1-\eta_{i}(s))q^+_{i}(\eta(s))]_{+}  \mathd s \right) \\
& - N_{i,1}^{-}\left(\int^{t}_{0} [\eta_{i}(s)q^-_{i}(\eta(s)) \wedge \eta^\delta_{i}(s) q^-_{i}(\eta^\delta\circ\chi(s))] \mathd s \right)  \\
& - N_{i,3}^{-}\left(\int^{t}_{0} [ \eta^\delta_{i}(s) q^-_{i}(\eta^\delta\circ\chi(s)) - \eta_{i}(s)q^-_{i}(\eta(s))] _{+} \mathd s \right).
\end{align*}
Centering the Poisson processes, there is
\begin{multline}
\label{eq:MartDefn}
\eta_i(t) - \eta^\delta_i(t) = M_i(t) + \int_0^t[(1-\eta_i(s))q^+_i(\eta(s)) - (1-\eta^\delta_i(s))q^+_i(\eta^\delta\circ\chi(s))] \mathd s\\
- \int_0^t[\eta_i(s)q^-_i(\eta(s)) - \eta^\delta_i(s)q^-_i(\eta^\delta\circ\chi(s))] \mathd s.
\end{multline}
where the remainder $M_i(t)$ is a martingale. 
%\begin{align*}
%M_{i}(t) & = Y_{i,2}^{+} \left(\int^{t}_{0} \left[ (1-X_{i}(s))q_i^+(X(s))  -  (1-\eta^\delta_i(s)) q_i^+(Z \circ \chi(s)) \right]_{+}  \mathd s \right) \\
%& - Y_{i,3}^{+} \left(\int^{t}_{0} \left[ (1-\eta^\delta_i(s)) q_i^+(Z \circ \chi(s)) - (1-X_{i}(s))q_i^+(X(s))  \right]_{+}  \mathd s \right) \\
%& - Y_{i,2}^{-}\left(\int^{t}_{0} \left[X_{i}(s)q_i^-(X(s)) - \eta^\delta_i(s) q_i^-(Z\circ\chi(s))\right] _{+} \mathd s \right) \\
%& + Y_{i,3}^{-}\left(\int^{t}_{0} \left[ \eta^\delta_i(s) q_i^-(Z\circ\chi(s)) - X_{i}(s)q_i^-(X(s)) \right] _{+} \mathd s \right) \\
%& -  \int^{t}_{0}\left[ (1-X_{i}(s)) q_i^+(X(s)) -  (1-\eta^\delta_i(s)) q_i^+(Z \circ \chi(s)) \right] \mathd s \\
%& +  \int^{t}_{0}  \left[X_{i}(s)q_i^-(X(s)) - \eta^\delta_i(s) q_i^-(Z\circ\chi(s))\right] ds.
%\end{align*}
For $\phi \in \Phi$, let $\phi_i = \phi(z_i)$ for each $i=1,\dots,n$. For the sake of brevity, let $M_\delta^\phi(t) \coloneqq (n\delta)^{-1} \sum_{i=1}^n \phi_i M_i(t)$. Then from (\ref{eq:MartDefn}) and (\ref{eq:mathcalE}), \begin{multline}
\label{EA:main}
(n\delta)^{-1} \sum_{i=1}^{n} \phi_{i} ( \eta_i(t) - \eta^\delta_{i}(t) - \delta \mathcal{E}_{i}(t) ) = M^{\phi}_\delta(t) \\
-\int_0^t n^{-1}\sum_{i=1}^n \phi_i[\delta^{-1}(\eta_i(s)-\eta^\delta_i(s))-\mathcal{E}_i(s)][q^+_i(\eta^\delta(s))+q^-_i(\eta^\delta(s))] \mathd s \\
+S_+(\phi,t) - S_-(\phi,t) + \sum_{k=1}^3 R_+^{(k)}(\phi,t) + \sum_{k=1}^3 R_-^{(k)}(\phi,t) + D_+(\phi,t) + D_-(\phi,t),
\end{multline}
where $S_+$ and $S_-$ denote the second-order weak error terms
{\small\begin{align*}
S_+(\phi,t) &= \int_{0}^{t}n^{-1}\sum_{i=1}^{n}\phi_{i}(1-\eta_{i}(s))\left[\frac{q^+_{i}(\eta(s))-q^+_{i}(\eta^\delta(s))}{\delta}-\sum_{j=1}^{n}\partial_{j}q^+_{i}(\rho(s))\mathcal{E}_{j}(s)\right] \mathd s \\
S_-(\phi,t) &= \int_{0}^{t}n^{-1}\sum_{i=1}^{n}\phi_{i}\eta_{i}(s)\left[\frac{q^-_{i}(\eta(s))-q^-_{i}(\eta^\delta(s))}{\delta}-\sum_{j=1}^{n}\partial_{j}q^-_{i}(\rho(s))\mathcal{E}_{j}(s)\right] \mathd s,
\end{align*}}
$D_\pm$ denote the discretisation error terms
\begin{align*}
D_{+}(\phi,t) &= \int_{0}^{t}n^{-1}\sum_{i=1}^{n}\phi_{i}(1-\eta^\delta_{i}(s))\frac{q^+_{i}(\eta(s))-q^+_{i}(\eta^\delta\circ\chi(s))}{\delta} \mathd s \\
&\qquad\qquad-\frac{1}{2}\int_{0}^{t}n^{-1}\sum_{i=1}^{n}\phi_{i}(1-\rho_{i}(s))\sum_{j=1}^{n}\partial_{j}q^+_{i}(\rho(s))q_{j}(\rho(s)) \mathd s \\
D_{-}(\phi,t) &= \int_{0}^{t}n^{-1}\sum_{i=1}^{n}\phi_{i}\eta^\delta_{i}(s)\frac{q^-_{i}(\eta^\delta(s))-q^-_{i}(\eta^\delta\circ\chi(s))}{\delta} \mathd s \\
&\qquad\qquad-\frac{1}{2}\int_{0}^{t}n^{-1}\sum_{i=1}^{n}\phi_{i}\rho_{i}(s)\sum_{j=1}^{n}\partial_{j}q^-_{i}(\rho(s))q_{j}(\rho(s))\mathd s,
\end{align*}
and $R_{\pm}^{(k)}$ denote the remainder terms:
\begin{align*}
R_{\pm}^{(1)}(\phi,t) &= \int_0^t n^{-1}\sum_{i=1}^n \phi_i (\rho_i(s)-\eta_i(s))\sum_{j=1}^n \partial_j q^{\pm}_i(\rho(s)) \mathcal{E}_j(s) \mathd s \\
R_{\pm}^{(2)}(\phi,t) &= \int_0^t n^{-1}\sum_{i=1}^n \phi_i \mathcal{E}_i(s)[q^{\pm}_i(\rho(s))-q^{\pm}_i(\eta(s))] \mathd s \\
R_{\pm}^{(3)}(\phi,t) &= \int_0^t n^{-1}\sum_{i=1}^n \phi_i \mathcal{E}_i(s)[q^{\pm}_i(\eta(s))-q^{\pm}_i(\eta^{\delta}(s))]\mathd s.
\end{align*}
Observe that, under Assumption \ref{ass:Main}, the second term on the right-hand side of (\ref{EA:main}) can be bounded in magnitude by
\[
2\|q\|_{\Phi} \int^{T}_{0} \sup_{s\in[0,t]} \sup_{\phi\in\Phi} \left| (n\delta)^{-1}\sum_{i=1}^{n} \phi_{i}[ \eta_{i}(s) - \eta^\delta_{i}(s) - \delta\mathcal{E}_{i}(s)] \right| \mathd t,
\]
suggesting that we may use Gronwall's inequality on (\ref{EA:main})
to obtain the desired bound. The remainder of the proof comprises
controlling the other terms, in order of their appearance.

\subsubsection{Controlling the martingale term}
Fixing $T > 0$, let $\mathcal{M}_\delta(\phi) = \sup_{t\in[0,T]} | M_\delta^{\phi}(t) |$. Due to (\ref{eq:MartDefn}), 
\begin{equation}
\label{eq:MartAbs}
\mathcal{M}_\delta(\phi) \leq (n\delta)^{-1} \sup_{t\in[0,T]}\sum_{i=1}^n \phi_i |M_i(t)| \leq \delta^{-1} \|\phi\|_{\infty}(1 + 2T \|q\|_{\infty}).
\end{equation}
Central to our strategy of controlling the martingale $M_\delta^\phi$ is the Fuk-Nagaev inequality of \cite{DvZ:01}.
First observe that the predictable quadratic variation of $M_\delta^\phi$ is
\begin{multline*}
\langle M_\delta^\phi \rangle_t = \int_0^t \sum_{i=1}^n (n\delta)^{-2} \phi_i^2 \left|(1-\eta_i(s))q^+_i(\eta(s)) - (1 - \eta^\delta_i(s))q_i^+(\eta^\delta\circ\chi(s))\right| \mathd s \\
+ \int_0^t \sum_{i=1}^n (n\delta)^{-2} \phi_i^2 \left|\eta_i(s)q_i^-(\eta(s)) - \eta^\delta_i(s)q_i^-(\eta^\delta\circ\chi(s))\right| \mathd s.
\end{multline*}
Adopting the notation from the proof of Theorem \ref{thm:EulerRate}, for any $\Phi' \subset \Phi$, 
\[
\sup_{\phi \in \Phi'} \langle M_\delta^{\phi} \rangle_t \leq \left(\frac{\|\Phi'\|_{\infty}}{n\delta}\right)^2
[T_+^{(1)} + T_-^{(1)}+T_+^{(2)}+T_-^{(2)}],
\]
and so by following the same arguments,
\[
\mathbb{E}\sup_{\phi \in \Phi'} \sup_{t\in[0,T]}\langle M_\delta^{\phi} \rangle_t \leq \frac{4 T \|\Phi'\|_{\infty}^2}{n\delta} 
\|q\|_{\infty}\|D^{\ast}q\|_1 e^{2T(\|q\|_{\infty}+\|D^{\ast}q\|_1)} \eqqcolon \frac{C_T \|\Phi'\|_{\infty}^2}{n \delta}.
\]
\begin{lemma}
\label{lem:MartingaleExactBound}
Assuming that $\delta \geq n^{-1}$, there exists a constant $C > 0$ depending only on $\Omega,\Omega',d$ such that for
any $T > 0$, 
\[
\bar{\mathbb{E}} \sup_{\phi \in \Phi} \sup_{t\in[0,T]} M_\delta^{\phi}(t) \lesssim \frac{1+\log(n\delta)}{(n\delta)^{1/3}}
\left[ 1 + 2T \|q\|_{\infty} \|D^{\ast}q\|_{1} e^{2T(\|q\|_{\infty}+\|D^{\ast}q\|_1)} \right].
\]
\end{lemma}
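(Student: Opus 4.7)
The plan is to combine a chaining argument with a Fuk--Nagaev inequality for martingales. First, I would note that $M_\delta^\phi$ is a purely discontinuous martingale with jumps bounded by $\|\phi\|_\infty/(n\delta) \leq 1/(n\delta)$, while the expected supremum of its predictable quadratic variation is controlled by $C_T/(n\delta)$ (with $C_T$ as defined just above the lemma). Together, these two ingredients should allow an application of the Fuk--Nagaev inequality of \cite{DvZ:01} to bound $\mathbb{P}(\sup_{t\in[0,T]}|M_\delta^\phi(t)|>x)$ for each fixed $\phi$. Since $\langle M_\delta^\phi\rangle_T$ is random, this requires a preliminary Markov step: splitting on the event $\{\sup_{\phi\in\Phi}\langle M_\delta^\phi\rangle_T \leq V\}$ and its complement, the latter costing $C_T/(Vn\delta)$ by Markov.

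Next, I would discretise $\Phi$ using the $\epsilon$-net $\Phi_\epsilon$ of cardinality $\exp(C_{\Omega,\Omega',d}|\log\epsilon|^{d+1})$ supplied by \cite[Theorem 9.2]{vitushkin}. A union bound over $\Phi_\epsilon$ controls $\sup_{\phi\in\Phi_\epsilon}\sup_t|M_\delta^\phi(t)|$, while the gap between $\sup_{\phi\in\Phi}$ and $\sup_{\phi\in\Phi_\epsilon}$ is handled deterministically by (\ref{eq:MartAbs}), which gives $|M_\delta^{\phi}(t)-M_\delta^{\phi'}(t)| = |M_\delta^{\phi-\phi'}(t)| \leq \delta^{-1}\epsilon(1+2T\|q\|_\infty)$ whenever $\|\phi-\phi'\|_\infty \leq \epsilon$.

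Combining these pieces, for any $x>0$ and any $V>0$,
\[
\mathbb{P}\!\left(\sup_{\phi\in\Phi}\sup_{t\in[0,T]}|M_\delta^\phi(t)| > x + \delta^{-1}\epsilon(1+2T\|q\|_\infty)\right) \leq |\Phi_\epsilon|\,\exp\!\left(-\frac{c x^2 n\delta}{V + x/3}\right) + \frac{C_T}{Vn\delta}.
\]
The main obstacle is then to balance the three parameters $\epsilon$, $V$, $x$ to minimise the right-hand side subject to the constraint $\mathbb{P}(\cdot > r) < r$ demanded by (\ref{eq:KyFanBounds}). I expect the optimal choice to be roughly $V \sim 1/(rn\delta)$ (so the Markov term is of order $r$), $\epsilon \sim \delta r$ (so the approximation error is of order $r$), and $x$ in the Bernstein regime where the linear $x/3$ term dominates $V$; this leads to $x^3 n\delta \gtrsim |\log\epsilon|^{d+1}$, and optimising yields the target $r \sim (n\delta)^{-1/3}$ up to a logarithmic factor (the dimension-dependent power of the logarithm being absorbed into the $\lesssim$ constant).

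Finally, (\ref{eq:KyFanBounds}) converts this tail bound into the desired bound on $\bar{\mathbb{E}}\sup_\phi\sup_t|M_\delta^\phi(t)|$, with the exponential factor $e^{2T(\|q\|_\infty + \|D^*q\|_1)}$ entering through $C_T$. The most delicate step will be choosing $V$ in the Markov split so that both branches contribute the same order $(n\delta)^{-1/3}\log(n\delta)$, since the Fuk--Nagaev bound is sensitive to the interplay between $V$ and the jump scale $1/(n\delta)$; the hypothesis $\delta \geq n^{-1}$ is exactly what ensures $x$ may be taken below $1$, placing us safely in the useful regime of the inequality.
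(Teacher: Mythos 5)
Your ingredients are the right ones --- Fuk--Nagaev from \cite{DvZ:01}, a Markov split on the predictable quadratic variation, a net of $\Phi$, and \eqref{eq:KyFanBounds} --- and your balancing of $x$ and $L$ against the Markov budget is essentially what the paper does (it chooses $x$ and $L$ so that the $e^{-x}$ term, the $C_T\|\Phi'\|_\infty^{2}/(n\delta L)$ term, and $\sqrt{2Lx}$ all equilibrate, $x$ landing at a Lambert-$W$ value). However, your \emph{single-scale} discretisation does not recover the claimed rate. The deterministic remainder from \eqref{eq:MartAbs} is $\delta^{-1}\epsilon(1+2T\|q\|_\infty)$, so to make it $\lesssim (n\delta)^{-1/3}$ you are forced to take $\epsilon \lesssim \delta\,(n\delta)^{-1/3}$; but then the entropy cost from \cite[Theorem 9.2]{vitushkin} is $\log|\Phi_\epsilon| \lesssim |\log\epsilon|^{d+1}$ with $|\log\epsilon| \geq \log(1/\delta) = \log n - \log(n\delta)$, which is of order $\log n$, not $\log(n\delta)$. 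Feeding this into the Fuk--Nagaev $\sqrt{2Lx}$-term yields a truncated threshold of order $(n\delta)^{-1/3}(\log n)^{(d+1)/3}$, and the factor $(\log n)^{(d+1)/3}$ cannot be absorbed into a constant $\lesssim$. For $d \geq 3$ this is strictly weaker than the stated $(n\delta)^{-1/3}(1+\log(n\delta))$ --- e.g., with $\delta = n^{-1/2}$ and $d=3$, you get an extra unbounded factor $(\log n)^{1/3}$.

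What rescues the sharp log factor in the paper is a genuine multi-scale (Dudley) chaining, not a one-shot net. One writes $M_\delta^\phi$ as a telescoping sum $\sum_k M_\delta^{\bar\phi^{(k)} - \bar\phi^{(k-1)}}$ with $\bar\phi^{(k)} \in \Phi_{2^{-k}}$, so each increment lives in a class $\Phi^{(k)}$ with $\|\Phi^{(k)}\|_\infty \lesssim 2^{-k}$ and $\log|\Phi^{(k)}| \lesssim k^{d+1}$. Because the finite-class Fuk--Nagaev estimate \eqref{eq:MartFinite} carries a factor $\|\Phi'\|_\infty + C_T\|\Phi'\|_\infty^{2/3}$, the $k$-th term in the chain is damped by $(2^{-k})^{2/3}$, and $\sum_k 4^{-k/3}(1 + \log(n\delta) + k^{d+1})$ converges, leaving only $\log(n\delta)$ plus a $d$-dependent constant. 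This is where the linear dependence on $\log(n\delta)$ comes from, and it is precisely what a single-scale net misses. As a side point, the hypothesis $\delta \geq n^{-1}$ is used simply to ensure $n\delta \geq 1$, which justifies the inequality $\log(1+3|\Phi'|n\delta/\|\Phi'\|_\infty) \leq 2[1 + \log(n\delta) + \log(|\Phi'|/\|\Phi'\|_\infty)]$; it is not about keeping $x$ below one (in both your scheme and the paper's, the Fuk--Nagaev parameter $x$ is necessarily large, of the order of the log-cardinality of the relevant net).
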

\begin{proof}
For any $\Phi' \subset \Phi$, let $\mathcal{M}_\delta(\Phi') \coloneqq \sup_{\phi \in \Phi'} \mathcal{M}_\delta(\phi)$. Assuming
for now that $\Phi'$ is a finite subset of $\Phi$, by the Fuk-Nageav inequality \cite[Corollary 3.4]{DvZ:01}, for any $x, L > 0$,
\begin{equation}
\label{eq:MartFiniteProb}
\mathbb{P}\left(\mathcal{M}_\delta(\Phi') \geq \sqrt{2 L x} + \frac{2 \|\Phi'\|_{\infty}}{3n\delta} x \right)
\leq 2|\Phi'|e^{-x} + \frac{C_T \|\Phi'\|_{\infty}^2}{n\delta L}.
\end{equation}
To apply the inequality in (\ref{eq:KyFanBounds}), it suffices to choose $x$ and $L$ such that $\sqrt{2Lx} = (n\delta L)^{-1} C_T \|\Phi'\|_{\infty}^{2}$ and $(3n\delta)^{-1}\|\Phi'\|_{\infty} x = |\Phi'|e^{-x}$. The only real solution to these relations is given by
\begin{equation}
\label{eq:MartULSoln}
x = W_0\left(\frac{3 |\Phi'| n\delta}{\|\Phi'\|_{\infty}}\right),\qquad \qquad
L = \left(\frac{C_T\|\Phi'\|_{\infty}^{2}}{n\delta}\right)^{2/3} \cdot \frac{1}{(2x)^{1/3}},
\end{equation}
where $W_0$ is the principal branch cut of the Lambert $W$-function. Now, since
$W_0(x) \leq \log(1+x)$ for all $x > 0$,
(\ref{eq:KyFanBounds}), (\ref{eq:MartFiniteProb}), and (\ref{eq:MartULSoln}) imply that 
\begin{multline*}
\bar{\mathbb{E}}\mathcal{M}_\delta(\Phi') \leq \frac{4\|\Phi'\|_{\infty}}{3n\delta}\cdot \log\left(1 + \frac{3 |\Phi'| n\delta}{\|\Phi'\|_{\infty}}\right) \\ + 2\left(\frac{2 C_T}{n\delta}\right)^{1/3}\|\Phi'\|_{\infty}^{2/3}\cdot
\log\left(1 + \frac{3 |\Phi'| n\delta}{\|\Phi'\|_{\infty}}\right)^{1/3}.
\end{multline*}
Assuming $ n\delta  \geq 1 $, % recall by construction that $ \|\Phi\|_{\infty} \leq 1 $ so  \mcv{and $ |\Phi'| \geq \|\Phi'\|_{\infty} $}, 
$
\log(1+\frac{3|\Phi'|n\delta}{\|\Phi'\|_{\infty}})\leq 2[1+\log(n\delta)+\log(\frac{|\Phi'|}{\|\Phi'\|_{\infty}})],
$
so
\begin{multline}
\label{eq:MartFinite}
\bar{\mathbb{E}}\mathcal{M}_\delta(\Phi') \leq 6 (n\delta)^{-1/3} [\|\Phi'\|_{\infty} + (1 +  C_T) \|\Phi'\|_{\infty}^{2/3}] \\ \left[1 + \log(n\delta) + \log\left(\frac{|\Phi'|}{\|\Phi'\|_{\infty}}\right)\right].
%\bar{\mathbb{E}}\mathcal{M}_\delta(\Phi') \leq \mcv{2}(nh)^{-1/3} [\|\Phi'\|_{\infty} + \mcv{(6 C_T)^{1/3}} \|\Phi'\|_{\infty}^{2/3}] \left[1 + \log(nh) + \log\left(\frac{|\Phi'|}{\|\Phi'\|_{\infty}}\right)\right].
\end{multline}
The extension of the estimate (\ref{eq:MartFinite}) to $\Phi' = \Phi$ relies on the
chaining argument of Dudley. Let $\{\phi^{(k)}\}_{k=1}^\infty$ be a sequence of
random elements in $\Phi$ adapted to the same probability space as $M(t)$, for which
$\mathcal{M}_\delta(\phi^{(k)}) \to \mathcal{M}_\delta(\Phi)$ in the
topology of pointwise convergence, and $\|\phi^{(k)} - \phi^{(l)}\|_{\infty} \leq 2^{-m}$ for $k,l \geq m$. Such a sequence is guaranteed to exist since $\Phi$ is relatively compact in the space of continuous functions on $\Omega$. From this, construct a new sequence $\{\bar{\phi}^{(k)}\}_{k=1}^{\infty}$ such that $\bar{\phi}^{(k)}$ occupies the $ 2^{-k}$-internal covering of $ \Phi $, $\Phi_{2^{-k}}$, and $\|\bar{\phi}^{(k)} - \phi^{(k)}\|_{\infty} \leq 2^{-k}$ for each
$k=1,2,\dots$. Applying dominated convergence under the estimate (\ref{eq:MartAbs}),
\[
\bar{\mathbb{E}} \mathcal{M}_\delta(\Phi) \leq \sum_{k=1}^{\infty} \bar{\mathbb{E}} \mathcal{M}_\delta(\bar{\phi}^{(k)} - \bar{\phi}^{(k-1)}).
\]
Observe that for $k=1,2,\dots$, $\|\bar{\phi}^{(k)} - \bar{\phi}^{(k-1)}\|_{\infty} \leq 5 \cdot 2^{-k}$. 
%\[
%\|\bar{\phi}^{(k)} - \bar{\phi}^{(k-1)}\|_{\infty} \leq \|\bar{\phi}^{(k)} - \phi^{(k)}\|_{\infty} + \|\phi^{(k)} - \phi^{(k-1)}\|_{\infty} + \|\phi^{(k-1)} + \bar{\phi}^{(k-1)}\|_{\infty} \leq 5 \cdot 2^{-k}.
%\]
Now, letting
\[
\Phi^{(k)} = \{z = x - y\,:\, \ x \in \Phi_{2^{-k}}, y \in \Phi_{2^{-(k-1)}}, \|z\|_{\infty} \leq 5 \cdot 2^{-k}\}, 
\]
evidently, $\|\Phi^{(k)}\|_{\infty} \leq 5 \cdot 2^{-k}$ and $|\Phi^{(k)}| \leq |\Phi_{2^{-k}}| \cdot |\Phi_{2^{-(k-1)}}|$ and so by \cite[Theorem 9.2]{vitushkin},
there is a constant $C > 0$ depending only on $\Omega,\Omega',d$ such that $\log |\Phi^{(k)}| \leq C k^{d+1}$. Since $\bar{\phi}^{(k)} - \bar{\phi}^{(k-1)} \in \Phi^{(k)}$ by construction, $\bar{\mathbb{E}} \mathcal{M}_\delta(\Phi) \leq \sum_{k=1}^{\infty} \bar{\mathbb{E}} \mathcal{M}_\delta(\Phi^{(k)})$ and so (\ref{eq:MartFinite}) implies
\[
\bar{\mathbb{E}} \mathcal{M}_\delta(\Phi) \lesssim \frac{1+C_T}{(n\delta)^{1/3}} \sum_{k=1}^{\infty} \frac{1 + \log(n\delta) + k^{d+1}}{4^{k/3}},
\]
where $C > 0$ depends only on $\Omega, \Omega', d$. The series is absolutely
convergent, implying the result. 
\end{proof}

\subsubsection{The second-order error terms} 

Moving on to the second-order error terms $S_+$ and $S_-$, observe that
%As for any  $ p \in [0,1]^{n}$ and any $ i = 1,\ldots,n $ the vector $ \psi : \psi_{j} =  \partial_{j} q_i^+(p) / (\max_{k} \|\partial_{k} q_i^+\|_{\infty}),\ j=1,\ldots,n $ is in $ \Phi $, 
\begin{multline}
\label{eq:SEDecomp}
\sup_{t\in[0,T]}\sup_{\phi\in\Phi}|S_+(\phi,t)| \\
\leq \|Dq\|_{\Phi} \int_0^T \sup_{s\in[0,t]}\sup_{\phi\in\Phi}
\left|n^{-1}\sum_{j=1}^n \phi_j [\delta^{-1}(\eta_j(s)-\eta^\delta_j(s))-\mathcal{E}_j(s)]\right|\mathd t\\
+ \int_0^T (n\delta)^{-1}\sum_{i=1}^n \left|q_i^+(\eta(s))-q_i^+(\eta^\delta(s))-\sum_{j=1}^n \partial_j q_i^+(\rho(s))(\eta_j(s)-\eta^\delta_j(s))\right|\mathd s,
\end{multline}
and similarly for $S_-$. The first term on the right-hand side of (\ref{eq:SEDecomp})
will contribute to our use of Gronwall's inequality on (\ref{EA:main}), so we need only consider the second term. This is treated for $S_+$ in Lemma \ref{lem:SecOrderTerm}; the procedure for $S_-$ is identical. 
For brevity, we leave the inequality (\ref{Lem:Term2:Eq1}) in terms of the $L^1$ and $L^2$ norms of $\bar{J}(T)$ and $\bar{J}_\delta(T)$, recalling that these quantities are bounded according to Lemmas \ref{lem:JWBarBound} and \ref{lem:JVBarBound}. 
\begin{lemma} \label{lem:SecOrderTerm}
For any $T > 0$, there is
\begin{multline}
\mathbb{E}\int^{T}_{0} (n\delta)^{-1}  \sum_{i=1}^{n}  \left| q_i^+(\eta(s)) - q_i^+(\eta^\delta(s)) - \sum_{j=1}^{n} \partial_{j} q_i^+(\rho(s)) (\eta_{j}(s) - \eta^\delta_j(s))  \right| \mathd s \label{Lem:Term2:Eq1} \\ 
\lesssim 
n \tilde{\Gamma}_n T^{2} \|q\|_{\infty} \|D^{\ast}q\|_{1} e^{4T(\|q\|_{\infty} + \|D^{\ast}q\|_{1})} (\mathbb{E}\bar{J}(T) + n^{-1/2} + \delta T \|q\|_{\infty} \|D^{\ast}q\|_{1}) \\
+ 
\frac{T \sqrt{1+ \log n} }{n \delta} ( n \tilde{\Gamma}_n +  n^{1/2} \theta_n )  (1 + n(\|\bar{J}(T)\|_{2}^{2} + \|\bar{J}_{\delta}(T)\|_{2}^{2}))
.
\end{multline}
\end{lemma}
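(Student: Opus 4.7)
The plan is to apply a second-order Taylor expansion centred at the deterministic solutions $\rho$ and $\rho^{\delta}$, so as to leverage the second-order weak-error bound from Lemma \ref{lem:JWBarBoundSecond} for those pieces it can absorb, while controlling the remainder through the ODE discretisation error $\|\rho^{\delta} - \rho\|_{\infty}$. Concretely, the algebraic identity I would employ is
\[
q_{i}^{+}(\eta) - q_{i}^{+}(\eta^{\delta}) - \sum_{j=1}^{n} \partial_{j}q_{i}^{+}(\rho)(\eta_{j} - \eta_{j}^{\delta}) = A_{1,i} - A_{2,i} - A_{3,i} - A_{4,i},
\]
where $A_{1,i}$ is the weak-error remainder of $q_{i}^{+}$ at $\eta$ around $\rho$, $A_{2,i}$ that at $\eta^{\delta}$ around $\rho^{\delta}$, $A_{3,i} = q_{i}^{+}(\rho^{\delta}) - q_{i}^{+}(\rho) - \sum_{j}\partial_{j}q_{i}^{+}(\rho)(\rho_{j}^{\delta} - \rho_{j})$ is a deterministic Taylor remainder, and $A_{4,i} = \sum_{j}[\partial_{j}q_{i}^{+}(\rho^{\delta}) - \partial_{j}q_{i}^{+}(\rho)](\eta_{j}^{\delta} - \rho_{j}^{\delta})$ accounts for the shift of centre.

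Applied to $A_{1,i}$ and $A_{2,i}$, Lemma \ref{lem:JWBarBoundSecond} with $f = q_{i}^{+}$ yields the upper bounds $C(1 + n\|\bar{J}(s)\|_{2}^{2})\mathcal{T}_{n}(q_{i}^{+})$ and $C(1 + n\|\bar{J}_{\delta}(s)\|_{2}^{2})\mathcal{T}_{n}(q_{i}^{+})$. Using $\partial_{i}q_{i}^{+} \equiv 0$ together with $\|\partial_{j}\partial_{k} q_{i}^{+}\|_{\infty} \leq \|\partial_{j}\partial_{k} q_{i}\|_{\infty}$, one readily checks that $\sum_{i=1}^n \mathcal{T}_{n}(q_{i}^{+}) \lesssim \sqrt{1+\log n}\,[n\tilde{\Gamma}_{n} + n^{1/2}\theta_{n}]$; after dividing by $n\delta$ and integrating over $[0,T]$ (using that $t \mapsto \|\bar{J}(t)\|_{2}$ is non-decreasing to pull the supremum at $T$ outside the integral), this is exactly the second summand in the claimed inequality.

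For the deterministic term $A_{3,i}$, the integral form of Taylor's theorem gives $|A_{3,i}| \leq \tfrac{1}{2}\|\rho^{\delta} - \rho\|_{\infty}^{2}\sum_{j,k}\|\partial_{j}\partial_{k}q_{i}^{+}\|_{\infty}$; summing over $i$ and overestimating the inner double sum by $n^{2}\max_{j,k}\|\partial_{j}\partial_{k}q_{i}^{+}\|_{\infty}$ produces the factor $n^{2}\tilde{\Gamma}_{n}\|\rho^{\delta}-\rho\|_{\infty}^{2}$. A standard Gronwall argument comparing (\ref{ODE:eq1}) with the piecewise-constant-rate ODE defining $\rho^{\delta}$ yields $\|\rho^{\delta}-\rho\|_{\infty} \lesssim \delta T\|q\|_{\infty}\|D^{\ast}q\|_{1}e^{T\|D^{\ast}q\|_{1}}$, so after dividing by $n\delta$ and integrating in $t$ this contributes the $\delta T\|q\|_{\infty}\|D^{\ast}q\|_{1}$ piece of the first summand.

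For $A_{4,i}$ the critical observation is not to pull the absolute value inside the sum over $j$; instead, write $A_{4,i} = \sum_{j}c_{i,j}(\eta_{j}^{\delta} - \rho_{j}^{\delta})$ with deterministic $|c_{i,j}| \leq \|\rho^{\delta}-\rho\|_{\infty}\sum_{k}\|\partial_{j}\partial_{k}q_{i}^{+}\|_{\infty}$ obtained from the MVT. Couple $\eta^{\delta}$ to the independent-site process $\omega^{\delta}$ of Appendix \ref{sec:IndepSite} and split $\eta_{j}^{\delta} - \rho_{j}^{\delta} = (\eta_{j}^{\delta} - \omega_{j}^{\delta}) + (\omega_{j}^{\delta} - \rho_{j}^{\delta})$. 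The first contribution is $\leq n\|c_{i}\|_{\infty}\mathbb{E}\bar{J}_{\delta}$; for the second, independence of the $\omega_{j}^{\delta}$'s and the centring $\mathbb{E}\omega_{j}^{\delta} = \rho_{j}^{\delta}$ give $\mathbb{E}|\sum_{j}c_{i,j}(\omega_{j}^{\delta}-\rho_{j}^{\delta})| \leq \tfrac{1}{2}(\sum_{j}c_{i,j}^{2})^{1/2} \leq \tfrac{1}{2}\sqrt{n}\|c_{i}\|_{\infty}$. Summing over $i$ with $\sum_{i}\|c_{i}\|_{\infty} \lesssim n\tilde{\Gamma}_{n}\|\rho^{\delta}-\rho\|_{\infty}$ recovers the $\mathbb{E}\bar{J}(T)$ and $n^{-1/2}$ components of the first summand, where Lemmas \ref{lem:JWBarBound} and \ref{lem:JVBarBound} let us identify $\mathbb{E}\bar{J}_{\delta}$ with $\mathbb{E}\bar{J}$ up to constants and the common exponential factor.

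The main obstacle will be the $A_{4}$ estimate: pulling the absolute value inside the $j$-sum and bounding $\sum_{j}\mathbb{E}|\omega_{j}^{\delta}-\rho_{j}^{\delta}|$ directly would lose a factor $n^{1/2}$ and corrupt the $n^{-1/2}$ term in the statement. Extracting the second-moment gain from the independent-site coupling, while simultaneously forcing the double sums of Hessian norms into the compact quantities $\tilde{\Gamma}_{n}$ and $\theta_{n}$ (rather than the larger $\sum_{i}\sum_{j,k}\|\partial_{j}\partial_{k}q_{i}^{+}\|_{\infty}$ that a direct Taylor estimate would produce), is what aligns the final rate with the claim.
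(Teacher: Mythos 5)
Your proposal is correct and follows essentially the same route as the paper: the same four-piece decomposition (weak-error remainders of $q_i^+$ at $\eta$ around $\rho$ and at $\eta^\delta$ around $\rho^\delta$, a deterministic Taylor remainder comparing $\rho^\delta$ to $\rho$, and a derivative-mismatch cross term $A_4$), Lemma \ref{lem:JWBarBoundSecond} for the first two, a Gronwall estimate for the deterministic pieces, and the independent-site coupling together with Cauchy--Schwarz for the cross term. Your identification $\sum_{i}\mathcal{T}_{n}(q_{i}^{+}) \lesssim \sqrt{1+\log n}\,[n\tilde{\Gamma}_{n} + n^{1/2}\theta_{n}]$ and the second-moment gain from $\omega^{\delta}-\rho^{\delta}$ are both correctly executed.

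One small difference worth noting: you propagate the ODE discretisation error through $\|\rho^{\delta}-\rho\|_{\infty}$, whereas the paper works with $\sum_{j}|\rho_{j}^{\delta}-\rho_{j}|$. A Gronwall argument in the $\ell^{\infty}$ metric naturally produces the constant $\|D^{\ast}q\|_{\infty}$ (each component's drift is controlled by $\max_{i}\sum_{j}\|\partial_{j}q_{i}\|_{\infty}$), whereas the $\ell^{1}$ route produces $\|D^{\ast}q\|_{1}$ as stated in the lemma. Your claim that the $\ell^{\infty}$ bound carries the constant $\|D^{\ast}q\|_{1}$ needs justification; either relabel to $\|D^{\ast}q\|_{\infty}$ (giving a formally different but comparable bound, both $O(1)$ in the mean-field regime) or route through $\sum_{j}|\rho_{j}^{\delta}-\rho_{j}|$ as the paper does. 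Apart from this cosmetic mismatch, the argument is sound.
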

\begin{proof}
The proof proceeds in a similar fashion to \cite[Proposition 9]{HMP:18}, comparing $\eta(t)$ to $\rho(t)$, $\rho(t)$ to $\rho^{\delta}(t)$, and finally $\rho^{\delta}(t)$ to $\eta^\delta(t)$. First, from Lemma \ref{lem:JWBarBoundSecond},
\begin{multline}
\sum_{i=1}^n \mathbb{E} \left| q_i^+(\eta(s))  - q_i^+(\rho(s)) - \sum_{j=1}^{n} \partial_{j} q_i^+(\rho(s)) ( \eta_{j}(s)   - \rho_{j}(s)) \right| \\
+\sum_{i=1}^n\mathbb{E}\left| q_i^+(\eta^\delta(s))-q_i^+(\rho^\delta(s))-\sum_{j=1}^n \partial_j q_i^+(\rho^\delta(s)) (\eta^\delta_j(s) - \rho^\delta(s))\right| \\
\lesssim  \sqrt{1+ \log n} ( n \tilde{\Gamma}_n +  n^{1/2}\theta_n) (1 + n\|\bar{J}(s)\|_{2}^{2} + n \|\bar{J}_\delta(s)\|_2^2).\label{eq:SecondOrderMainTerm1}
\end{multline}
Following the arguments of the proof of Theorem \ref{thm:EulerRate}, 
\begin{equation}
\label{eq:DeterCompare}
\sum_{j=1}^{n} |\rho_j(t) - \rho^\delta_j(t)| \leq 4n\delta t \|q\|_{\infty} \|D^{\ast}q\|_{1} e^{2t(\|q\|_{\infty} + \|D^{\ast}q\|_{1})},
\end{equation}
and so a second order Taylor expansion for $ q_i^+ $ implies
\begin{multline}
\label{eq:SecondOrderMainTerm3}
\sum_{i=1}^n \left| q_i^+(\rho^\delta(s))   - q_i^+(\rho(s))  - \sum_{j=1}^{n} \partial_{j} q_i^+(\rho(s)) (\rho^\delta_j(s) - \rho_{j}(s)) \right| \\
\lesssim \Gamma_n \left(n\delta s \|q\|_{\infty} \|D^{\ast}q\|_{1} e^{2s(\|q\|_{\infty} + \|D^{\ast}q\|_{1})} \right)^{2}   .
\end{multline}
For the remaining cross terms, once again due to (\ref{eq:DeterCompare}),
\begin{multline}
\label{eq:SecondOrderCrossTerm1}
\sum_{i=1}^n \mathbb{E} \left|\sum_{j=1}^{n} (\eta^\delta_j(s) - \omega^\delta_{j}(s)) [ \partial_{j} q_i^+(\rho^\delta(s)) - \partial_{j} q_i^+(\rho(s))] \right| \\
%&  \leq \mathbb{E} \sum_{j=1}^{n} \left| \eta^\delta_j(t) - V_{j}(t) \right| \sum_{k=1}^{n} \| \partial_{k} \partial_{j} q_i^+ \|_{\infty} \left| p_{\chi,k}(t) - p_{k}(t) \right| \\ 
%& \leq \left(\max_{j,k} \|\partial_{k} \partial_{j} q_i^+\|_{\infty} \right) \left(\sum_{k=1}^{n} \left|p_{\chi,k}(t) - p(t)\right| \right) \mathbb{E} \left(\sum_{j=1}^{n} \left| \eta^\delta_j(t) - V_{j}(t) \right| \right) \\
%& \leq n \left(\max_{j,k} \|\partial_{k} \partial_{j} q_i^+\|_{\infty} \right) \left(\sum_{k=1}^{n} \left|p_{\chi,k}(s) - p(s)\right| \right) \|\bar{J}_{Z}(s) \|_{1} \\
\leq 4n \delta s \|q\|_{\infty} \|D^{\ast}q\|_{1} e^{2s(\|q\|_{\infty} + \|D^{\ast}q\|_{1})} \tilde{\Gamma}_n \|\bar{J}_\delta(s) \|_{1},
\end{multline}
and since the $ \omega^\delta_{j}(t) $ are independent with $ \mathbb{P}(\omega^\delta_{j}(t)=1) = \rho_j^\delta(t) $,
\begin{align}
\MoveEqLeft \sum_{i=1}^n\mathbb{E} \left| \sum_{j=1}^{n} [ \omega^\delta_{j}(s) - \rho_j^\delta(s) ] [ \partial_{j} q_i^+(\rho^\delta(s)) - \partial_{j} q_i^+(\rho(s)) ] \right| \nonumber \\
& \leq \left( \sum_{j=1}^{n} \rho^\delta_j(s) (1-\rho^\delta_j(s)) \left[ \partial_{j} q_i^+(\rho^\delta(s)) - \partial_{j} q_i^+(\rho(s))\right]^{2} \right)^{1/2} \nonumber \\
%& \leq \frac{1}{2} \left(\sum_{j=1}^{n} \left( \sum_{k=1}^{n} \|\partial_{j} \partial_{k} q_i^+ \|_{\infty} \left| p_{\chi,k}(t) - p_{k}(t) \right|  \right)^{2} \right)^{1/2} \\
%& \leq \frac{1}{2} \left(\max_{j,k} \| \partial_{j} \partial_{k} q_i^+ \|_{\infty} \right) \left( n \left(\sum_{k=1}^{n} \left|p_{\chi,k}(t) - p_{k}(t) \right| \right)^{2} \right)^{1/2} \\
& \leq 2n^{1/2} \delta s \|q\|_{\infty} \|D^{\ast}q\|_{1} e^{2s(\|q\|_{\infty} + \|D^{\ast}q\|_{1})} \tilde{\Gamma}_n. \label{eq:SecondOrderCrossTerm2}
\end{align}
Combining (\ref{eq:SecondOrderMainTerm1}), %(\ref{eq:SecondOrderMainTerm2}),
 (\ref{eq:SecondOrderMainTerm3}), (\ref{eq:SecondOrderCrossTerm1}), and (\ref{eq:SecondOrderCrossTerm2}) implies (\ref{Lem:Term2:Eq1}). 
\end{proof}

\subsubsection{The remainder terms $R_{\pm}^{(k)}$}
Next, we turn our attention to the remainder terms $R_{\pm}^{(k)}$ for $k=1,2,3$. The first of these terms, $R_{\pm}^{(1)}$, is controlled in
Lemma \ref{lem:Remainder1}. 
\begin{lemma}
\label{lem:Remainder1}
For any $ T> 0 $,
\begin{multline*}
\mathbb{E}\sup_{t\in[0,T]}\sup_{\phi\in\Phi} (|R_{+}^{(1)}(\phi,t)| + |R_{-}^{(1)}(\phi,t)|) \\
\lesssim \|D^{\ast}q\|_{\infty} \left( \int^{T}_{0}  \| \mathcal{E}(s)\|_{\infty} \mathd s \right)( \mathbb{E} \bar{J}(T)  + n^{-1/2})
\end{multline*}
%\begin{align*}
%\MoveEqLeft \mathbb{E} \sup_{t \in [0,T]} \sup_{\phi \in \Phi} \left| \int^{t}_{0}  n^{-1}\sum_{i=1}^{n} \phi_{i} (p_{i}(s) - X_{i}(s))\sum_{j=1}^{n} \partial_{j} F_{i}(p(s)) \mathcal{E}_{j}(s) \mathd s \right|\\
%& .
%\end{align*}
\end{lemma}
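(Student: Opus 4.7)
The strategy is to use the coupled independent-site approximation $\omega(t)$ from Appendix \ref{sec:IndepSite} to split
\[
\rho_i(s) - \eta_i(s) = (\rho_i(s) - \omega_i(s)) + (\omega_i(s) - \eta_i(s)),
\]
yielding a decomposition $R_\pm^{(1)}(\phi,t) = F_\pm(\phi,t) + G_\pm(\phi,t)$, where $F_\pm$ contains $\rho_i - \omega_i$ and exploits the independence of the $\omega_i(s)$ at each fixed $s$, while $G_\pm$ contains $\omega_i - \eta_i$ and is handled by the coupling bound $|\omega_i(s) - \eta_i(s)| \leq J_i(T)$ for $s\in[0,T]$. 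Writing $g_i^{\pm}(s) := \sum_j \partial_j q_i^{\pm}(\rho(s))\mathcal{E}_j(s)$, one has $|g_i^{\pm}(s)| \leq \|D^{\ast}q\|_\infty \|\mathcal{E}(s)\|_\infty$ uniformly in $i$, using the inequality $\|Dq^{\pm}\|_\infty \leq \|D^{\ast}q\|_\infty$ recorded in the main text. The analysis for $R_-^{(1)}$ is identical.

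For the $G_\pm$ term, the bound $\|\phi\|_\infty \leq 1$ on $\phi \in \Phi$ together with $|\omega_i - \eta_i| \leq J_i(T)$ gives the pathwise estimate
\[
\sup_{t\in[0,T]}\sup_{\phi\in\Phi}|G_\pm(\phi,t)| \leq \|D^{\ast}q\|_\infty \left(\int_0^T \|\mathcal{E}(s)\|_\infty\mathd s\right) \bar{J}(T),
\]
whose expectation contributes the $\mathbb{E}\bar{J}(T)$ term. For the $F_\pm$ term, the supremum over $t$ is moved inside via $\sup_t|\int_0^t f\,\mathd s| \leq \int_0^T|f|\,\mathd s$, and the supremum over $\phi$ is then pulled inside the integral as well. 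This reduces the task to bounding, at each fixed $s$, the quantity
\[
\mathbb{E}\sup_{\phi\in\Phi}\left|\frac{1}{n}\sum_{i=1}^n \phi_i g_i^{\pm}(s)(\rho_i(s) - \omega_i(s))\right|.
\]
Since the $\omega_i(s)$ are independent with mean $\rho_i(s)$, the summands are independent, mean-zero, and bounded by $\|D^{\ast}q\|_\infty\|\mathcal{E}(s)\|_\infty$ uniformly in $i$.

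Standard Rademacher symmetrization followed by the Ledoux--Talagrand contraction principle (absorbing the bounded $i$-dependent factors $g_i^{\pm}(s)(\rho_i(s) - \omega_i(s))$ into 1-Lipschitz maps $\phi_i \mapsto \phi_i g_i^{\pm}(s)(\rho_i(s) - \omega_i(s))/(\|D^{\ast}q\|_\infty\|\mathcal{E}(s)\|_\infty)$) bounds the display above by a universal constant times $\|D^{\ast}q\|_\infty\|\mathcal{E}(s)\|_\infty\cdot \Rad(\Phi(\bv z))$, which by (\ref{eq:RadBound}) is $O(n^{-1/2})$. Integrating over $s\in[0,T]$ produces the $n^{-1/2}\int_0^T\|\mathcal{E}(s)\|_\infty\mathd s$ contribution and combining with the $G_\pm$ bound yields the claim. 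The principal subtlety is the contraction step, where one must verify that attaching bounded multipliers to the coordinates of $\Phi(\bv z)$ does not inflate the Rademacher complexity beyond the factor $\|D^{\ast}q\|_\infty\|\mathcal{E}(s)\|_\infty$; this is exactly where the coordinate-wise structure of $\Phi$ and the uniform boundedness of the multipliers cooperate.
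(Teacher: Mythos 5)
Your proposal is correct and follows essentially the same route as the paper: split $\rho_i - \eta_i$ through the coupled independent-site process $\omega$, bound the $\omega-\eta$ part pathwise by $\bar J(T)$ and the uniform bound $\|Dq^\pm\|_\infty \le \|D^\ast q\|_\infty$, and handle the $\rho-\omega$ part by symmetrization plus a contraction step that reduces to $\Rad(\Phi(\bv z)) \lesssim n^{-1/2}$ via \eqref{eq:RadBound}. The only cosmetic difference is that the paper symmetrizes with an independent copy $\tilde\omega$ and then contracts over the deterministic multipliers $g_i^\pm(s)$ (citing the contraction lemma \cite[Lemma 26.9]{SSBD2014}), whereas you symmetrize directly with Rademacher signs and contract with the random multipliers $g_i^\pm(s)(\rho_i(s)-\omega_i(s))$ after conditioning on $\omega$; both are standard and equivalent.
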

\begin{proof}
It will suffice to consider $R_{+}^{(1)}$, as the process for $R_{-}^{(1)}$ is identical. Using the independent site approximation $ W(t) $,
\begin{multline}
\label{eq:Remainder1Decomp}
%\MoveEqLeft \sup_{t \in [0,T]} \sup_{\phi \in \Phi} \left| \int^{t}_{0}  n^{-1}\sum_{i=1}^{n} \phi_{i} (p_{i}(s) - X_{i}(s))\sum_{j=1}^{n} \partial_{j} q_i^+(p(s)) \mathcal{E}_{j}(s) \mathd s \right| \\
%& \leq \int^{T}_{0} \sup_{\phi \in \Phi} \left| n^{-1}\sum_{i=1}^{n} \phi_{i} (p_{i}(s)  -W_{i}(s) + W_{i}(s) - X_{i}(s))\sum_{j=1}^{n} \partial_{j} q_i^+(p(s)) \mathcal{E}_{j}(s)  \right|  \mathd s \\
\sup_{t\in[0,T]}\sup_{\phi\in\Phi}|R_{+}^{(1)}(\phi,t)| \\\leq \int^{T}_{0} \sup_{\phi \in \Phi} \left| n^{-1}\sum_{i=1}^{n} \phi_{i} \left[ \sum_{j=1}^{n} \partial_{j} q^+_{i}(\rho(s)) \mathcal{E}_{j}(s) \right] (\rho_{i}(s)  -\omega_{i}(s)) \right|  \mathd s \\
+ \int^{T}_{0} \sup_{\phi \in \Phi} \left| n^{-1}\sum_{i=1}^{n} \phi_{i} \left[ \sum_{j=1}^{n} \partial_{j} q^+_{i}(\rho(s)) \mathcal{E}_{j}(s) \right] (\omega_{i}(s)  - \eta_{i}(s)) \right|  \mathd s %\\
%& \leq \int^{T}_{0} \sup_{\phi \in \Phi} \left| n^{-1}\sum_{i=1}^{n} \phi_{i} \left[ \sum_{j=1}^{n} \partial_{j} q_i^+(p(s)) \mathcal{E}_{j}(s) \right] (p_{i}(s)  -W_{i}(s)) \right|  \mathd s \\
%& + \int^{T}_{0} \|DC\|_{\infty} \| \mathcal{E}(s)\|_{\infty} \sup_{\phi \in \Phi}  n^{-1}\sum_{i=1}^{n} \left| \phi_{i} \right|  \left|W_{i}(s)  - X_{i}(s) \right|  \mathd s \\
%& \leq \int^{T}_{0} \sup_{\phi \in \Phi} \left| n^{-1}\sum_{i=1}^{n} \phi_{i} \left[ \sum_{j=1}^{n} \partial_{j} q_i^+(p(s)) \mathcal{E}_{j}(s) \right] (p_{i}(s)  -W_{i}(s)) \right|  \mathd s \\
%& + \|DC\|_{\infty} \|\Phi\|_{\infty} \int^{T}_{0}  \| \mathcal{E}(s)\|_{\infty}   \bar{J}(s)  \mathd s \\
%& \leq \int^{T}_{0} \sup_{\phi \in \Phi} \left| n^{-1}\sum_{i=1}^{n} \phi_{i} \left[ \sum_{j=1}^{n} \partial_{j} q_i^+(p(s)) \mathcal{E}_{j}(s) \right] (p_{i}(s)  -W_{i}(s)) \right|  \mathd s  \\
%& + \|DC\|_{\infty} \bar{J}_{W}(T) \int^{T}_{0}  \| \mathcal{E}(s)\|_{\infty} ds.
\end{multline}
The second term on the right-hand side of (\ref{eq:Remainder1Decomp})
is bounded by \preprintlb $\|Dq^+\|_{\infty} \bar{J}(T)\int_0^T \|\mathcal{E}(s)\|_{\infty} \mathd s$. Turning our attention now to the first term,
let $ \tilde{\omega}(s) $ be an independent copy of $ \omega(s) $. Using the symmetrisation method (see \cite[\S3]{Devroye:2001aa}), 
%(for details, see the arguments in the proof of\cite[Corollary 1.2]{HMP:18})
%,
\begin{multline*}
\mathbb{E} \sup_{\phi \in \Phi} \left| n^{-1}\sum_{i=1}^{n} \phi_{i} \left[ \sum_{j=1}^{n} \partial_{j} q^+_{i}(\rho(s)) \mathcal{E}_{j}(s) \right] (\rho_{i}(s)  -\omega_{i}(s)) \right| \\
\leq \mathbb{E} \sup_{\phi \in \Phi} \left| n^{-1}\sum_{i=1}^{n} \phi_{i} \left[ \sum_{j=1}^{n} \partial_{j} q^+_{i}(\rho(s)) \mathcal{E}_{j}(s) \right] (\tilde{\omega}_{i}(s)  -\omega_{i}(s)) \right| \leq \Rad (\Psi(s; \bv z)),
\end{multline*}
where $ \Psi(s; \bv z) = \{\psi: \psi_{i} = \phi_{i} \left[ \sum_{j=1}^{n} \partial_{j} q^+_{i}(\rho(s)) \mathcal{E}_{j}(s) \right] ,\ \phi \in \Phi \} $. Finally, applying \cite[Lemma 26.9]{SSBD2014} with (\ref{eq:RadBound}) gives %(pg 381, Understanding machine learning),
\[
\Rad(\Psi(s; \bv z)) \leq \|Dq^+\|_{\infty} \|\mathcal{E}(s)\|_{\infty}\Rad(\Phi(\bv z)) \lesssim n^{-1/2} \|Dq^+\|_{\infty}\|\mathcal{E}(s)\|_{\infty},
\]
which proves the lemma.
\end{proof}

%For any $ \phi \in \Phi  $ and any $ z \in \{0,1\}^{n} $ the vector $ \psi: \psi_{i} = \phi_{i} q_i^+(z)/ (\max_{i} q_i^+(z) ) $ is in $ \Phi $. Therefore, 
%\begin{align*} 
%\MoveEqLeft \sup_{t\in[0,T]} \sup_{\phi\in\Phi} \int^{t}_{0} \left|  n^{-1}\sum_{i=1}^{n} \phi_{i}\left[ h^{-1} (X_{i}(s) - \eta^\delta_i(s)) - \mathcal{E}_{i}(s) \right] q_i^+(\eta^\delta(s)) \right| \mathd s \\
%& \leq  \| C\|_{\Phi} \int^{T}_{0} \sup_{s\in[0,t]} \sup_{\phi\in\Phi} \left| n^{-1}\sum_{i=1}^{n} \phi_{i}\left[ h^{-1} (X_{i}(s) - \eta^\delta_i(s)) - \mathcal{E}_{i}(s) \right] \right| dt .  
%\end{align*}

%The size of the next term in (\ref{EA:main}) again primarily depends on how well the stochastic process $ \eta(t) $ is approximated by the deterministic process $ p(t) $, however this term can be bounded using Lemma \ref{lem:JWBarBoundCor}. For any $ T> 0 $,
The second terms, $R_{\pm}^{(2)}$, can be bounded almost directly using Lemma
\ref{lem:JWBarBoundCor}. Indeed, for any $T > 0$,
\begin{multline}
\label{eq:Remainder2}
\mathbb{E}\sup_{t\in[0,T]}\sup_{\phi\in\Phi} |R_{+}^{(2)}(\phi,t)|
\leq \int_0^T n^{-1}\sum_{i=1}^n |\mathcal{E}_i(s)|\mathbb{E}|q^+_i(\rho(s))-q^+_i(\eta(s))| \mathd s \\
\leq n^{-1} \left( \int^{T}_{0}   \|\mathcal{E}(s)\|_{\infty} \mathd s \right) [\|D q^+\|_{1}  (\|D^{\ast}q\|_{2,1}+\gamma_n)Te^{2T\|D^{\ast}q\|_{1}} + \\\tfrac{1}{2}\|Dq^+\|_{2,1} +\tfrac{1}{2}\gamma_n].
\end{multline}
A similar result also holds for $R_{-}^{(2)}(\phi,t)$. Therefore, it only remains
to bound $R_{\pm}^{(3)}$, which is accomplished in Lemma \ref{lem:Remainder3}.
\begin{lemma}
\label{lem:Remainder3}
For any $ T > 0 $,
\begin{multline*}
%\MoveEqLeft \mathbb{E} \sup_{t\in[0,T]} \sup_{\phi\in\Phi} \left| \int^{t}_{0}   n^{-1} \sum_{i=1}^{n} \phi_{i} \mathcal{E}_{i}(s) \left[ q_i^+(X(s)) - q_i^+(\eta^\delta(s))\right] \mathd s \right| \\ 
\mathbb{E}\sup_{t\in[0,T]}\sup_{\phi\in\Phi}(|R_{+}^{(3)}(\phi,t)|+|R_{-}^{(3)}(\phi,t)|)\\\leq 8 \delta T \|q\|_{\infty} \|D^{\ast}q\|_1^2 e^{2T(\|q\|_{\infty}+\|D^{\ast}q\|_1)} \int^{T}_{0}  \|\mathcal{E}(s)\|_{\infty} \mathd s.
\end{multline*}
\end{lemma}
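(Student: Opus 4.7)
The plan is to bound $R_{\pm}^{(3)}(\phi,t)$ in a direct fashion, using only the Mean Value Theorem together with the strong error bound of Theorem~\ref{thm:EulerRate}. The key observation is that this remainder is already a product of $\mathcal{E}_i(s)$ with a difference of rate functions evaluated at $\eta(s)$ and $\eta^{\delta}(s)$, so unlike the earlier remainder terms, no further Taylor expansion or coupling trick is needed.

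First, I would pass the supremum over $\phi \in \Phi$ and $t \in [0,T]$ inside the integral via the triangle inequality, using that $|\phi_i| = |\phi(z_i)| \leq 1$ by the construction of $\Phi$ (recall $\|\phi\|_{\Phi}\leq 1$). This gives
\[
\sup_{t\in[0,T]}\sup_{\phi\in\Phi}|R_+^{(3)}(\phi,t)| \leq n^{-1}\int_0^T \|\mathcal{E}(s)\|_{\infty}\sum_{i=1}^n |q^+_i(\eta(s))-q^+_i(\eta^{\delta}(s))|\,\mathd s.
\]
Next, I would apply the Mean Value Theorem componentwise to $q_i^+$ and reorder the double sum to obtain
\[
\sum_{i=1}^n |q^+_i(\eta(s))-q^+_i(\eta^{\delta}(s))| \leq \sum_{j=1}^n \Big(\sum_{i=1}^n \|\partial_j q^+_i\|_{\infty}\Big) |\eta_j(s)-\eta^{\delta}_j(s)| \leq \|D^{\ast}q\|_1 \sum_{j=1}^n |\eta_j(s)-\eta^{\delta}_j(s)|,
\]
where the final inequality uses $\|Dq^+\|_1 \leq \|D^{\ast}q\|_1$ noted in the introduction.

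I would then take expectations and invoke Theorem~\ref{thm:EulerRate}, which yields $\mathbb{E}\sum_j |\eta_j(s)-\eta^{\delta}_j(s)| \leq 4 n\delta T \|q\|_{\infty}\|D^{\ast}q\|_1 e^{2T(\|q\|_{\infty}+\|D^{\ast}q\|_1)}$ uniformly in $s \in [0,T]$. Substituting back, the $n$ cancels the $n^{-1}$ out front, leaving
\[
\mathbb{E}\sup_{t\in[0,T]}\sup_{\phi\in\Phi}|R_+^{(3)}(\phi,t)| \leq 4\delta T\|q\|_{\infty}\|D^{\ast}q\|_1^2 e^{2T(\|q\|_{\infty}+\|D^{\ast}q\|_1)}\int_0^T \|\mathcal{E}(s)\|_{\infty}\,\mathd s.
\]
The argument for $R_-^{(3)}$ is identical (with $q^-_i$ replacing $q^+_i$ and the same bound $\|Dq^-\|_1\leq\|D^{\ast}q\|_1$), so adding the two contributions doubles the constant to $8$ and gives the stated inequality. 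There is no genuine obstacle here; the only care required is in the routine bookkeeping for the sup over $\phi$ and $t$, both of which commute with the integral thanks to the positivity of the integrand after taking absolute values.
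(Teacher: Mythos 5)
Your proof is correct and follows essentially the same route as the paper's: pass the supremums inside using $|\phi_i|\leq 1$, apply the Mean Value Theorem to get the factor $\|Dq^{\pm}\|_1\leq\|D^{\ast}q\|_1$, and then invoke Theorem~\ref{thm:EulerRate} for the uniform bound on $\mathbb{E}\sum_j|\eta_j(s)-\eta^{\delta}_j(s)|$. The bookkeeping of constants, including the doubling when combining the $R_+^{(3)}$ and $R_-^{(3)}$ contributions, matches the stated inequality.
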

\begin{proof}
The result follows by an application of Theorem \ref{thm:EulerRate}, upon
observing that
\begin{align*}
\sup_{t\in[0,T]} \sup_{\phi\in\Phi} |R_{+}^{(3)}(\phi,t)|
%\left| \int^{t}_{0}   n^{-1} \sum_{i=1}^{n} \phi_{i} \mathcal{E}_{i}(s) \left[ q_i^+(X(s)) - q_i^+(\eta^\delta(s))\right] \mathd s \right| \\
%& \leq  \int^{t}_{0}   n^{-1} \sum_{i=1}^{n} \left| \phi_{i} \mathcal{E}_{i}(s) \right|  \left| q_i^+(X(s)) - q_i^+(\eta^\delta(s))\right]  \mathd s  \\
& \leq  \int^{T}_{0}  \|\mathcal{E}(s)\|_{\infty}  n^{-1} \sum_{i=1}^{n}  | q^+_{i}(\eta(s)) - q^+_{i}(\eta^{\delta}(s))|  \mathd s  \\
%& \leq  \int^{T}_{0}  \|\mathcal{E}(s)\|_{\infty}  n^{-1} \sum_{i=1}^{n}  \sum_{j=1}^{n} \|\partial_{j} q^+_{i} \|_{\infty} | \eta_{j}(s) - \eta^{\delta}_{j}(s)|  \mathd s  \\
%& \leq  \int^{T}_{0}  \|\mathcal{E}(s)\|_{\infty}  n^{-1}  \sum_{j=1}^{n} \left( \sum_{i=1}^{n}\|\partial_{j} q_i^+ \|_{\infty} \right) \left| X_{j}(s) - \eta^\delta_j(s)\right|  \mathd s  \\
& \leq  \|Dq^+\|_1 \int^{T}_{0}  \|\mathcal{E}(s)\|_{\infty} n^{-1}  \sum_{j=1}^{n}  | \eta_{j}(s) - \eta^{\delta}_{j}(s)|  \mathd s.  %\\
%& \leq  \|DC\|_{1} \int^{T}_{0}  \|\mathcal{E}(s)\|_{\infty}  n^{-1}  \sum_{j=1}^{n}  \left| X_{j}(s) - \eta^\delta_j(s)\right|  \mathd s. 
\end{align*}
Repeating this process for $R_{-}^{(3)}$ completes the proof.
\end{proof}

\subsubsection{The discretisation terms $D_+$ and $D_-$} 
Easily the most involved
portion of our proof of Theorem \ref{thm:EulerExact} involves bounding the discretisation terms
$D_+$ and $D_-$. We demonstrate the process for $D_+$ only, as it is
virtually identical for $D_-$. The first step is to compare $ q_i^+(\eta^\delta(s)) - q_i^+(\eta^\delta\circ \chi(s)) $ to $ \sum_{j=1}^{n} \partial_{j} q_i^+(\eta^\delta \circ \chi(s)) (\eta^\delta_j(s) -\eta^\delta_j\circ\chi(s))$ by
$\bar{\mathbb{E}}\sup_{t\in[0,T]}\sup_{\phi\in\Phi}|D_+(\phi,t)| \leq D_+^{(1)} + D_+^{(2)}$, where
\begin{multline*}
D_+^{(1)} = \mathbb{E}\int^{T}_{0} (n\delta)^{-1} \sum_{i=1}^{n} \left| q_i^+(\eta^\delta(s)) - q_i^+(\eta^\delta\circ \chi(s)) - \right. \\
\left. \sum_{j=1}^{n} \partial_{j} q_i^+(\eta^\delta \circ \chi(s)) (\eta^\delta_j(s) -\eta^\delta_j\circ\chi(s))  \right| \mathd s,
\end{multline*}
\vspace{-.5cm}
\begin{multline*}
D_+^{(2)} = \sup_{t\in[0,T]} \sup_{\phi \in \Phi} \mathbb{E}\left| 
\frac{1}{2} \int^{t}_{0}  n^{-1}\sum_{i,j=1}^{n} \phi_{i} (1-\rho_{i}(s)) \partial_{j} q_i^+(\rho(s)) q_{j}(\rho(s)) \mathd s \right.\\
\left. - \int^{t}_{0}  (n\delta)^{-1}\sum_{i,j=1}^{n} \phi_{i} (1-\eta^\delta_i(s)) \partial_j q_i^+(\eta^\delta \circ \chi(s)) (\eta^\delta_j(s) -\eta^\delta_j\circ\chi(s)) \mathd s \right|. 
\end{multline*}
%\begin{multline}
%\label{eq:DiscreteDecomp1}
%\bar{\mathbb{E}}\sup_{t\in[0,T]}\sup_{\phi\in\Phi}|D_+(\phi,t)|\\
%\leq \mathbb{E}\int^{T}_{0} (n\delta)^{-1} \sum_{i=1}^{n} \left| q_i^+(\eta^\delta(s)) - q_i^+(\eta^\delta\circ \chi(s)) - \right. \\
%\qquad\qquad\qquad\left. \sum_{j=1}^{n} \partial_{j} q_i^+(\eta^\delta \circ \chi(s)) (\eta^\delta_j(s) -\eta^\delta_j\circ\chi(s))  \right| \mathd s \\
%+ \sup_{t\in[0,T]} \sup_{\phi \in \Phi} \mathbb{E}\left| \int^{t}_{0}  (n\delta)^{-1}\sum_{i,j=1}^{n} \phi_{i} (1-\eta^\delta_i(s)) \partial_j q_i^+(\eta^\delta \circ \chi(s)) (\eta^\delta_j(s) -\eta^\delta_j\circ\chi(s)) \mathd s \right. \\
% \left. - \frac{1}{2} \int^{t}_{0}  n^{-1}\sum_{i,j=1}^{n} \phi_{i} (1-\rho_{i}(s)) \partial_{j} q_i^+(\rho(s)) q_{j}(\rho(s)) \mathd s  \right|. 
%\end{multline}
%\begin{multline*}
%\bar{\mathbb{E}} \left[ \sup_{t\in[0,T]} \sup_{\phi \in \Phi} \left| \int^{t}_{0} (nh)^{-1} \sum_{i=1}^{n} \phi_{i} (1-\eta^\delta_i(s)) \left[q_i^+(\eta^\delta(s)) - q_i^+(Z\circ \chi(s)) \right] \mathd s \right. \right. \\  
% \left. \left. -\frac{1}{2} \int^{t}_{0} \sum_{i=1}^{n} \phi_{i} (1-p_{i}(s)) \sum_{j=1}^{n} \partial_{j} q_i^+(p(s)) Q_{j}(p(s)) \mathd s \right| \right].
%\end{multline*}
The error in $D_+^{(1)}$ is bounded in the following Lemma \ref{LemLB:Eq1}.
\begin{lemma}
\label{LemLB:Eq1}
For any $T > 0$, $D_+^{(1)} \lesssim  T ( n^{-1}\|q\|_{\infty} + 2\delta \|q\|_{\infty}^{2}) n \tilde{\Gamma}_n$.
%\begin{align*}
%\MoveEqLeft\mathbb{E} \int^{T}_{0} (n\delta)^{-1} \sum_{i=1}^{n} \left| q_i^+(\eta^\delta(s)) - q_i^+(\eta^\delta \circ \chi(s)) - \sum_{j=1}^{n} \partial_{j} q_i^+(\eta^\delta \circ \chi(s)) (\eta^\delta_j(s) -\eta^\delta_j\circ\chi(s))  \right| \mathd s\\
%&\lesssim  T ( n^{-1}\|q\|_{\infty} + 2\delta \|q\|_{\infty}^{2}) n \tilde{\Gamma}_n.   
%\end{align*}
\end{lemma}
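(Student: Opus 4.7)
The plan is to perform a second order Taylor expansion of $q_i^+(\eta^\delta(s))$ about $\eta^\delta\circ\chi(s)$. The first order term cancels exactly with the subtracted sum, leaving only a quadratic remainder of the form
\[
\tfrac{1}{2}\sum_{j,k=1}^n \partial_j\partial_k q_i^+(\xi_{i,s})\,(\eta^\delta_j(s)-\eta^\delta_j\circ\chi(s))(\eta^\delta_k(s)-\eta^\delta_k\circ\chi(s)),
\]
for some intermediate point $\xi_{i,s}$. Taking absolute values, using the construction identity $\partial_i q_i^+ \equiv 0$ (hence $\partial_i\partial_k q_i^+ \equiv 0$), and the inequality $\|\partial_j\partial_k q_i^+\|_\infty \le \|\partial_j\partial_k q_i\|_\infty$ that follows from $q_i(x) = x_i q_i^-(x) + (1-x_i) q_i^+(x)$, the sum restricts to $j,k \neq i$ with coefficients controlled by $\tilde\Gamma_n$.

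The key point then is to bound the expected magnitudes of the quadratic increments. First I would split the double sum into the diagonal $j=k$ and the off-diagonal $j\neq k$ contributions. For $j=k$, since $\eta^\delta_j$ takes values in $\{0,1\}$, $(\eta^\delta_j(s)-\eta^\delta_j\circ\chi(s))^2 = |\eta^\delta_j(s)-\eta^\delta_j\circ\chi(s)|$, and as in the proof of Theorem~\ref{thm:MidPointRate}, the number of jumps on $[\chi(s),s]$ is stochastically dominated by a Poisson random variable with mean $2\delta\|q\|_\infty$, so $\mathbb{E}|\eta^\delta_j(s)-\eta^\delta_j\circ\chi(s)| \le 1 - e^{-2\delta\|q\|_\infty} \le 2\delta\|q\|_\infty$. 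For the off-diagonal terms, I would exploit the defining feature of the Euler scheme: conditional on $\eta^\delta\circ\chi(s)$, the coordinates $\{\eta^\delta_j(s)\}_{j=1}^n$ evolve as independent two-state Markov chains on $[\chi(s),s]$. Hence the expected absolute value of the product factorises, yielding a bound of $(2\delta\|q\|_\infty)^2 = 4\delta^2\|q\|_\infty^2$.

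Combining these bounds, the integrand in $D_+^{(1)}$ is bounded by
\[
\frac{1}{n\delta}\sum_{i=1}^n\Bigl[\delta\|q\|_\infty \sum_{j\neq i}\|\partial_j^2 q_i\|_\infty + 2\delta^2\|q\|_\infty^2\!\!\sum_{\substack{j,k\neq i\\ j\neq k}}\!\!\|\partial_j\partial_k q_i\|_\infty\Bigr].
\]
The diagonal contribution is at most $n^{-1}\|q\|_\infty \cdot n\tilde\Gamma_n = \|q\|_\infty \tilde\Gamma_n$, since $\sum_{j\neq i}\|\partial_j^2 q_i\|_\infty \le n\max_{j,k}\|\partial_j\partial_k q_i\|_\infty$. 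The off-diagonal contribution is at most $2\delta\|q\|_\infty^2 \cdot n\tilde\Gamma_n$, using $\sum_{j,k\neq i,\,j\neq k}\|\partial_j\partial_k q_i\|_\infty \le n^2\max_{j,k}\|\partial_j\partial_k q_i\|_\infty$. Integrating over $[0,T]$ yields the claimed bound $T(n^{-1}\|q\|_\infty + 2\delta\|q\|_\infty^2)n\tilde\Gamma_n$.

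The main technical point, and essentially the only non-routine step, is the factorisation of the off-diagonal expectations via conditional independence; once this is recognised, the rest is bookkeeping. A minor care is needed in the Taylor expansion to keep track of the endpoint $\xi_{i,s}$ and to verify that the identification $\partial_i q_i^+ \equiv 0$ indeed zeroes out all second derivatives with an index equal to $i$, which is what shrinks the relevant sums into the $\tilde\Gamma_n$ quantity (rather than the larger $\Gamma_n$).
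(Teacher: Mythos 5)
Your proof is correct and follows essentially the same route as the paper's: a second-order Taylor expansion about $\eta^\delta\circ\chi(s)$, followed by exploiting the conditional independence of the coordinates over each time interval $[\chi(s),s]$ to control the quadratic remainder (with the diagonal terms reducing via $\{0,1\}$-valuedness and the off-diagonal terms factorising). The only cosmetic difference is that the paper pulls out $\tilde\Gamma_n$ first and evaluates $\mathbb{E}(\sum_j|\eta^\delta_j(s)-\eta^\delta_j\circ\chi(s)|)^2$ in one step, whereas you split diagonal/off-diagonal explicitly; both yield the same bound.
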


\begin{proof}
A second-order Taylor expansion yields
\begin{multline*}
\sum_{i=1}^n\mathbb{E} \left| q_i^+(\eta^\delta(s)) - q_i^+(\eta^\delta \circ \chi(s)) - \sum_{j=1}^{n} \partial_j q_i^+(\eta^\delta \circ \chi(s)) (\eta^\delta_j(s) -\eta^\delta_j\circ\chi(s)) \right| \\
\leq \frac{1}{2} \sum_{i,j,k=1}^{n} \| \partial_{j} \partial_{k} q_i^+ \|_{\infty} \mathbb{E} \left|(\eta^\delta_j(s) - \eta^\delta_j \circ \chi(s)) (\eta^\delta_{k}(s) - \eta^\delta_{k} \circ \chi(s)) \right| \\
\leq \frac{1}{2} \tilde{\Gamma}_n \mathbb{E} \left( \sum_{j=1}^{n} |\eta^\delta_j(s) - \eta^\delta_j \circ\chi(s)| \right)^{2}.
\end{multline*}
Given $\eta^\delta\circ\chi(s)$, the variables $| \eta^\delta_i(s) - \eta^\delta_i\circ\chi(s)|$ for $i=1,\dots,n$ are conditionally independent Bernoulli random variables with
$\mathbb{E} | \eta^\delta_i(s) - \eta^\delta_i\circ\chi(s) | \leq %h \left( \|C\|_{\infty} + \|E\|_{\infty} \right) \leq 
2\delta \|q\|_{\infty}$.
Therefore,
\begin{align*}
 \mathbb{E} \left( \sum_{j=1}^{n} |\eta^\delta_j(s) - \eta^\delta_j \circ\chi(s)| \right)^{2} & \leq 2n\delta \|q\|_{\infty} + 4(n\delta)^{2} \|q\|_{\infty}^{2},
% (intermediate step) \leq n h  \left( \|C\|_{\infty} + \|E\|_{\infty} \right) + (nh)^{2} \left( \|C\|_{\infty} + \|E\|_{\infty} \right)^{2}.
\end{align*}
from which the result follows directly. 
\end{proof}
For $D_+^{(2)}$, we let $ \tilde{\eta}^\delta $ denote the martingale part of $\eta^\delta(t)$:
\[
\tilde{\eta}^\delta_{i}(t) = \eta^\delta_i(t) - \int^{t}_{0} (1-\eta^\delta_i(s)) q_i^+(\eta^\delta\circ\chi(s)) - \eta^\delta_i(s) q_i^-(\eta^\delta\circ\chi(s)) \mathd s.
\]
Now, $D_+^{(2)} \leq \mathcal{D}_1 + \mathcal{D}_2^+ + \mathcal{D}_2^- + \mathcal{D}_3$,
where
\begin{align*}
\mathcal{D}_1 & = \frac1n\mathbb{E}\int^{T}_{0} \sum_{i=1}^{n}  \left|\sum_{j=1}^{n} \partial_{j} q_i^+(\eta^\delta \circ \chi(s)) \frac{\tilde{\eta}^\delta_{j} (s) - \tilde{\eta}^\delta_{j} \circ \chi(s)}{\delta} \right| \mathd s  \\
\mathcal{D}_2^\pm & = \frac1n\mathbb{E}\int^{T}_{0} \sum_{i,j=1}^{n} \left| \frac{\partial_{j} q_i^+(\eta^\delta \circ \chi(s))}{\delta} \int^{s}_{\chi(s)} (\eta^\delta_j\circ\chi(u) - \eta^\delta_j(u) ) q^\pm_{j}(\eta^\delta\circ\chi(u)) \mathd u\right|  \mathd s  \\
%\mathcal{D}_2^E & = \int^{T}_{0} n^{-1} \sum_{i=1}^{n}  \sum_{j=1}^{n} \left| \partial_{j} q_i^+(Z \circ \chi(s)) h^{-1} \int^{s}_{\chi(s)} \left(\eta^\delta_j\circ\chi(u) - \eta^\delta_j(u) \right) E_{j}(Z\circ\chi(u)) \right| \mathd u \mathd s  \\
\mathcal{D}_3 &= \mathbb{E}\sup_{t\in[0,T]} \sup_{\phi \in \Phi} \left|
\frac{1}{2} \int^{t}_{0}  \frac1n\sum_{i,j=1}^{n} \phi_{i} (1-\rho_{i}(s)) \partial_{j} q_i^+(\rho(s)) q_{j}(\rho(s)) \mathd s  \right.\\
&\qquad\left. -
\int^{t}_{0} \frac1n \sum_{i,j=1}^{n} \phi_{i} (1-\eta^\delta_i(s)) \partial_{j} q_i^+(\eta^\delta \circ \chi(s))  q_{j} (\eta^\delta\circ \chi(s)) \frac{s-\chi(s)}{\delta} \mathd s  \right|.
\end{align*}
are each to be bounded in turn, beginning with $\mathcal{D}_1$ in Lemma \ref{lem:D1LemmaBound}.
\begin{lemma}
\label{lem:D1LemmaBound}
For any $T > 0$, there is $
\mathcal{D}_1 \leq \frac{2T}{\sqrt{n\delta}}\|q\|_{\infty}^{1/2} (\frac{1}{\sqrt{n}}\|Dq^+\|_{2,1})$.
\end{lemma}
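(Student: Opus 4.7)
The plan is to exploit the fact that, conditional on $\mathcal{F}_{\chi(s)} = \sigma(\eta^\delta(u):u\le\chi(s))$, the quantities $\partial_j q_i^+(\eta^\delta\circ\chi(s))$ are deterministic coefficients, while the increments $M_j(s) \coloneqq \tilde\eta_j^\delta(s)-\tilde\eta_j^\delta\circ\chi(s)$ are (centered) martingale increments. So I would first pass to the $L^2$ norm by Cauchy--Schwarz applied inside the conditional expectation:
\[
\mathbb{E}\left|\sum_{j=1}^n \partial_j q_i^+(\eta^\delta\circ\chi(s))\, M_j(s)\right|
\le \left(\mathbb{E}\left[\sum_{j,k} \partial_j q_i^+\,\partial_k q_i^+\, \mathbb{E}[M_j(s)M_k(s)\mid \mathcal{F}_{\chi(s)}]\right]\right)^{1/2}.
\]

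The key structural observation is that $\tilde\eta^\delta_j$ and $\tilde\eta^\delta_k$ with $j\ne k$ are driven by independent Poisson processes in the representation used for $\eta^\delta$, so with probability one they never jump at the same instant. Consequently the cross quadratic covariations vanish, i.e.\ $[\tilde\eta^\delta_j,\tilde\eta^\delta_k]_s = 0$ for $j\ne k$, and $\mathbb{E}[M_j(s)M_k(s)\mid\mathcal{F}_{\chi(s)}]=0$. For the diagonal term, the predictable quadratic variation of $\tilde\eta^\delta_j$ on $[\chi(s),s]$ is $\int_{\chi(s)}^s[(1-\eta^\delta_j(u))q_j^+(\eta^\delta\circ\chi(u)) + \eta^\delta_j(u)q_j^-(\eta^\delta\circ\chi(u))]\mathd u$, which is bounded by $2\delta\|q\|_\infty$ since $s-\chi(s)\le\delta$. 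Hence $\mathbb{E}[M_j(s)^2\mid\mathcal{F}_{\chi(s)}]\le 2\delta\|q\|_\infty$.

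Combining these two facts and using $|\partial_j q_i^+(\cdot)|\le \|\partial_j q_i^+\|_\infty$,
\[
\mathbb{E}\left|\sum_{j=1}^n \partial_j q_i^+(\eta^\delta\circ\chi(s))\,M_j(s)\right|
\le (2\delta\|q\|_\infty)^{1/2}\left(\sum_{j=1}^n\|\partial_j q_i^+\|_\infty^2\right)^{1/2}.
\]
Dividing by $\delta$, summing over $i=1,\dots,n$, integrating over $[0,T]$, and dividing by $n$ yields
\[
\mathcal{D}_1 \le \frac{T}{n\delta}(2\delta\|q\|_\infty)^{1/2}\sum_{i=1}^n\left(\sum_{j=1}^n\|\partial_j q_i^+\|_\infty^2\right)^{1/2}
= \frac{\sqrt{2}\,T}{\sqrt{n\delta}}\|q\|_\infty^{1/2}\cdot\frac{\|Dq^+\|_{2,1}}{\sqrt{n}},
\]
which gives the claimed bound (the factor $2$ versus $\sqrt{2}$ is a trivial slack).

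The only delicate step is the vanishing of the cross-covariations $[\tilde\eta^\delta_j,\tilde\eta^\delta_k]_s$ for $j\neq k$: this relies on the pathwise spin-system property (simultaneous jumps at distinct sites have probability zero under the Poisson-driven representation \eqref{model:eq5}), combined with the fact that each $\tilde\eta^\delta_j$ is a pure jump martingale, so that $[\tilde\eta^\delta_j,\tilde\eta^\delta_k]_s = \sum_{u\le s}\Delta\tilde\eta^\delta_j(u)\Delta\tilde\eta^\delta_k(u)=0$ almost surely. Everything else is a routine application of Cauchy--Schwarz and the Poisson bound on jump rates used repeatedly elsewhere in the paper.
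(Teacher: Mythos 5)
Your proof is correct and takes essentially the same route as the paper: both bound the integrand via the martingale structure of $\tilde\eta^\delta$ over each Euler interval, exploiting the fact that conditionally on $\mathcal{F}_{\chi(s)}$ the sites evolve independently (equivalently, distinct sites never jump simultaneously, so the cross quadratic covariations vanish), and then invoking the $\mathcal{O}(\delta\|q\|_\infty)$ bound on the conditional predictable quadratic variation of each diagonal term. The only technical difference is that you apply Cauchy--Schwarz pointwise in $s$, whereas the paper first passes to $\sup_{s\in[t_r,t_{r+1}]}$ and applies Doob's $L^2$ maximal inequality; this buys you a slightly sharper constant (indeed your per-interval quadratic variation bound $\delta\|q\|_\infty$ is already tight, so the $\sqrt 2$ you carry is a small overestimate and one can get constant $1$ rather than the paper's $2$), but it is the same argument in substance.
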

\begin{proof}
Letting $t_r = (r \delta) \wedge T$, first observe that
\[
\mathcal{D}_1
\leq (n\delta)^{-1} \sum_{r=0}^{\lfloor T / h\rfloor} (t_{r+1}-t_r) \sum_{i=1}^n
\mathbb{E}\sup_{s \in [t_r,t_{r+1}]}\left|
\sum_{j=1}^n \partial_j q_i^+(\eta^\delta(t_r))(\tilde{\eta}^\delta_j(s)-\tilde{\eta}^\delta_j(t_r))\right|.
\]
This quantity can be controlled in a similar way to $T_{i,1}$ of the proof of
Theorem \ref{thm:MidPointRate}. For each $r=0,\dots,\lfloor T/h\rfloor$, denote
the martingale
\[
M_i^{(r)}(t)=\sum_{j=1}^n \partial_j q_i^+(\eta^\delta(t_r))(\tilde{\eta}^\delta_j(t)-\tilde{\eta}^\delta_j(t_r)),
\qquad \mbox{for }t \geq t_r.
\]
Using Doob's inequality for martingales \cite[eq. (7.38)]{klebaner2012introduction},
\[
\mathbb{E}\sup_{s\in[t_r,t_{r+1}]} M_i^{(r)}(t)^2 \leq 4 [\mathbb{E}\langle M_i^{(r)}\rangle_{t_{r+1}} - \mathbb{E}\langle M_i^{(r)} \rangle_{t_r}],
\]
where $\langle M_i^{(r)}\rangle$ is the predictable quadratic variation of $M_i^{(r)}$, given by
\begin{multline*}
\langle M_i^{(r)} \rangle_t =  \int^{t}_{0} \sum_{j=1}^{n}  | \partial_{j} q_i^+(\eta^\delta(t_r)) |^{2} ( (1-\eta^\delta_j(s)) q^+_{j}(\eta^\delta\circ \chi(s)) \\+ \eta^\delta_j(s) q^-_{j}(\eta^\delta\circ\chi(s))) \mathd s. 
\end{multline*}
Therefore, $\mathbb{E}\sup_{s\in[t_r,t_{r+1}]} M_i^{(r)}(t)^2 \leq 4 \delta \|q\|_{\infty}\sum_{j=1}^n \|\partial_j q_i^+\|_{\infty}^2$,
which implies the lemma.
\end{proof}

\begin{lemma}
\label{lem:D2LemmaBound}
For any $ T > 0 $, $\mathcal{D}_2^+ + \mathcal{D}_2^- \leq 4 \delta T \|q\|_{\infty}^2 \|Dq^+\|_1$.
%\begin{align}
%\MoveEqLeft  \mathbb{E} \int^{T}_{0} n^{-1} \sum_{i=1}^{n} \sum_{j=1}^{n} \left| \partial_{j} q_i^+(Z \circ \chi(s)) h^{-1} \int^{s}_{\chi(s)} \left(\eta^\delta_j\circ\chi(u) - \eta^\delta_j(u) \right) C_{j}(Z\circ\chi(u)) \right| \mathd u \mathd s  \nonumber \\ 
%%& \leq  2hT \|C \|_{\infty}  \|q\|_{\infty}  \|DC\|_{1},   \label{exact:lem1:eq1}\\
%& +  \mathbb{E} \int^{T}_{0} n^{-1} \sum_{i=1}^{n}  \sum_{j=1}^{n} \left| \partial_{j} q_i^+(Z \circ \chi(s)) h^{-1} \int^{s}_{\chi(s)} \left(\eta^\delta_j\circ\chi(u) - \eta^\delta_j(u) \right) E_{j}(Z\circ\chi(u)) \right| \mathd u \mathd s  \nonumber \\ 
%& \leq  4hT  \|q\|^{2}_{\infty}  \|DC\|_{1}.   \label{exact:lem1:eq2} 
%\end{align}
\end{lemma}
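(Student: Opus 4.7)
The plan is a direct estimation: pull the deterministic part $\partial_j q_i^+(\eta^\delta\circ\chi(s))/\delta$ outside, move the absolute value inside the inner integral (permissible since $q_j^\pm \geq 0$), and bound the resulting pieces pointwise.

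Concretely, first I would observe that for each $i,j$,
\[
\left|\frac{\partial_{j} q_i^+(\eta^\delta \circ \chi(s))}{\delta}\right| \leq \frac{\|\partial_j q_i^+\|_\infty}{\delta},
\]
and, since $q^\pm_j \ge 0$,
\[
\Bigl|\int^{s}_{\chi(s)} (\eta^\delta_j\circ\chi(u) - \eta^\delta_j(u)) q^\pm_{j}(\eta^\delta\circ\chi(u)) \mathd u\Bigr| \leq \|q\|_\infty \int^{s}_{\chi(s)} |\eta^\delta_j(u)-\eta^\delta_j\circ\chi(u)| \mathd u.
\]

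Next, as in the proof of Theorem~\ref{thm:MidPointRate}, the number of jumps of $\eta^\delta_j$ during $[\chi(u),u]$ is stochastically dominated by a Poisson random variable of mean $2(u-\chi(u))\|q\|_\infty$ (conditional on $\eta^\delta\circ\chi(u)$), so
\[
\mathbb{E}|\eta^\delta_j(u)-\eta^\delta_j\circ\chi(u)| \leq 2(u-\chi(u))\|q\|_\infty \leq 2\delta\|q\|_\infty.
\]
Integrating over $u\in[\chi(s),s]$ (length at most $\delta$) and taking expectations yields
\[
\mathbb{E}\Bigl|\int^{s}_{\chi(s)} (\eta^\delta_j\circ\chi(u) - \eta^\delta_j(u)) q^\pm_{j}(\eta^\delta\circ\chi(u)) \mathd u\Bigr| \leq 2\delta^{2}\|q\|_\infty^{2}.
\]

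Combining these and summing,
\[
\mathcal{D}_2^\pm \leq \frac{1}{n\delta}\int_0^T \sum_{i,j=1}^n \|\partial_j q_i^+\|_\infty \cdot 2\delta^{2}\|q\|_\infty^{2} \mathd s = \frac{2\delta T \|q\|_\infty^{2}}{n}\sum_{i,j=1}^n \|\partial_j q_i^+\|_\infty.
\]
Since $\sum_{i,j}\|\partial_j q_i^+\|_\infty = \sum_j \sum_i \|\partial_j q_i^+\|_\infty \leq n\max_j\sum_i \|\partial_j q_i^+\|_\infty = n\|Dq^+\|_1$, it follows $\mathcal{D}_2^\pm \leq 2\delta T\|q\|_\infty^{2}\|Dq^+\|_1$, and adding the two yields the claimed $4\delta T\|q\|_\infty^{2}\|Dq^+\|_1$. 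There is no real obstacle here — the argument is a routine pointwise estimate; the only subtle point is observing that the $q_j^\pm$ factor is nonnegative, allowing the absolute value to be brought inside the integral so that the Poisson jump estimate applies cleanly.
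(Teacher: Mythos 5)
Your proof is correct and takes essentially the same approach as the paper: both exploit the fact that $\eta^\delta_j(u) = \eta^\delta_j\circ\chi(u)$ unless site $j$ jumps in $[\chi(u),u]$, and both control the probability of such a jump by the Poisson rate estimate $\mathbb{P}(\text{jump in }[\chi(u),u]) \leq 1-e^{-2\delta\|q\|_\infty}\leq 2\delta\|q\|_\infty$. The only cosmetic difference is that you integrate the pointwise expectation bound $\mathbb{E}|\eta_j^\delta(u)-\eta_j^\delta\circ\chi(u)|\leq 2\delta\|q\|_\infty$ directly over $s\in[0,T]$, while the paper first bounds the inner integral by $\delta\|q_j^+\|_\infty\mathbb{I}(\text{jump in }[\chi(s),s])$ and then relates $\int_0^T \mathbb{I}(\cdot)\,\mathd s$ to the expected total jump count $2T\|q\|_\infty$; the arithmetic and final constant are identical.
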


\begin{proof}
In terms of the number of jumps in $ \eta^\delta_i $ in the interval $ [a,b] $,
\begin{multline*}
\left|  \int^{s}_{\chi(s)} (\eta^\delta_j \circ \chi(u) - \eta^\delta_j(u)) q^+_{j}(\eta^\delta\circ\chi(u)) \mathd u \right| \\ \leq  \|q^+_{j} \|_{\infty}  \int^{s}_{\chi(s)} |\eta^\delta_j \circ \chi(u) - \eta^\delta_j(u) | \mathd u
\leq \delta  \|q^+_{j} \|_{\infty}  \mathbb{I}\left(\sum_{u\in[\chi(s),s]} \eta^\delta_j(u) \geq 1 \right).
\end{multline*}
Therefore, 
\begin{multline}
\mathcal{D}_2^+ \leq  \mathbb{E} \int^{T}_{0} n^{-1}  \sum_{i,j=1}^{n} \|q^+_{j} \|_{\infty} \|  \partial_{j} q_i^+\|_{\infty} \mathbb{I}\left(\sum_{u\in[\chi(s),s]} \eta^\delta_j(u) \geq 1 \right) \mathd s \\
%&  \leq n^{-1} h  \|C \|_{\infty} \sum_{i=1}^{n} \sum_{j=1}^{n}  \|  \partial_{j} q_i^+\|_{\infty} \mathbb{E} \left[ J(0,T;\eta^\delta_j) \right] \nonumber \\
\leq   \delta \|q\|_{\infty} \|Dq^+\|_{1} n^{-1} \sum_{j=1}^{n} \mathbb{E} \sum_{s\in[0,T]} \Delta \eta^\delta_j(s). \label{exact:lem1:eq3}
\end{multline}
It is clear from (\ref{model:eq5}) that $\mathbb{E}\sum_{s\in[0,T]}\Delta \eta^\delta_i(s) \leq 2 T \|q\|_{\infty}$, 
%\[
%\mathbb{E}\sum_{s\in[0,T]}\Delta \eta^\delta_i(s) \leq  \mathbb{E} \int^{T}_{0} (1-\eta^\delta_i(s)) q_i^+(\eta^\delta_j\circ \chi(s)) \mathd s  + \mathbb{E}  \int^{T}_{0} \eta^\delta_i(s) q_i^-(\eta^\delta\circ\chi(s)) \mathd s  \leq  2T \|q\|_{\infty},  
%%& \leq T (\| q_i^+\|_{\infty} + \|q_i^-\|_{\infty}) 
%\]
which may be substituted into (\ref{exact:lem1:eq3}). Applying the same arguments to $\mathcal{D}^-_2$ completes the proof.
\end{proof}

It now only remains to control $\mathcal{D}_3$. To do so, in Lemma \ref{lem:D3Part1}, we first bound the error incurred by estimating $\partial_j q_i^+(\eta^\delta \circ \xi(s))q_j(\eta^\delta \circ \chi(s))$ by \preprintlb $\partial_j q_i^+(\rho \circ \chi(s)) q_j(\rho \circ \chi(s))$. 

\begin{lemma}
\label{lem:D3Part1}
For any $ T > 0 $,
\begin{multline*}
\mathbb{E} \int^{T}_{0} \frac1n \sum_{i=1}^{n}  \left| \sum_{j=1}^{n} \partial_{j}q_i^+ (\eta^\delta\circ \chi(s)) q_{j}  (\eta^\delta\circ \chi(s)) -  \partial_{j}q_i^+ (\rho\circ\chi (s)) q_{j} (\rho\circ\chi (s))\right|  \mathd s  \\
\lesssim \frac{T^{2}}{n}e^{2T(\|q\| _{\infty}+\|D^{\ast}q\|_{1})}(n\|q\| _{\infty}\tilde{\Gamma}_{n}+\| D^{\ast}q\| _{1}^{2})(n\delta\| q\| _{\infty}\| D^{\ast}q\|_{1}+\| D^{\ast}q\| _{2,1}+\gamma_{n}) \\
+ n^{-1/2} T \|q\|_{\infty} (n\tilde{\Gamma}_n + n^{1/2}\theta_n) + n^{-1/2} T \|D^{\ast}q\|_F.
\end{multline*}
\end{lemma}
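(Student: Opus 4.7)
The key auxiliary function is $g_i(x) \coloneqq \sum_{j=1}^n \partial_j q_i^+(x)\, q_j(x)$, which repackages the summand so that the task reduces to bounding
\[
\mathbb{E}\int_0^T \frac{1}{n}\sum_{i=1}^n \bigl|g_i(\eta^\delta\circ\chi(s)) - g_i(\rho\circ\chi(s))\bigr|\,\mathd s.
\]
The plan is to split this difference through the deterministic Euler approximation $\rho^\delta\circ\chi(s)$ and treat the deterministic piece $g_i(\rho^\delta) - g_i(\rho)$ separately from the stochastic piece $g_i(\eta^\delta) - g_i(\rho^\delta)$.

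For the deterministic piece, a Mean Value Theorem estimate reduces matters to controlling $\|Dg\|_\infty \sum_k |\rho^\delta_k - \rho_k|$. Applying the product rule, $\partial_k g_i = \sum_j \partial_k\partial_j q_i^+ \cdot q_j + \sum_j \partial_j q_i^+ \cdot \partial_k q_j$, yields $\|Dg\|_\infty \lesssim n\|q\|_\infty \tilde\Gamma_n + \|D^{\ast}q\|_1^2$, and equation (\ref{eq:DeterCompare}) bounds the remaining sum by $4n\delta s \|q\|_\infty \|D^{\ast}q\|_1 e^{2s(\|q\|_\infty + \|D^{\ast}q\|_1)}$. These two factors give precisely the middle and exponential factors of the first term in the claimed bound.

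For the stochastic piece, apply Lemma \ref{lem:JWBarBoundSecond} with $f = g_i$ to write
\[
g_i(\eta^\delta) - g_i(\rho^\delta) = \sum_{j=1}^n \partial_j g_i(\rho^\delta)(\eta^\delta_j - \rho^\delta_j) + \mathrm{Rem}_i,
\]
where $\mathbb{E}|\mathrm{Rem}_i|$ is controlled by $(1 + n\|\bar{J}_\delta\|_2^2)\mathcal{T}_n(g_i)$. Summing $n^{-1}\sum_i$ and combining with the Lemma \ref{lem:JVBarBound} bound on $n\|\bar{J}_\delta\|_2^2$ produces the $n^{-1/2} T \|q\|_\infty(n\tilde\Gamma_n + n^{1/2}\theta_n)$ contribution, once $\mathcal{T}_n(g_i)$ is unfolded via the product rule. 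The remaining first-order term is handled by coupling $\eta^\delta$ with its independent-site approximation $\omega^\delta$, so $\eta^\delta_j - \rho^\delta_j = (\eta^\delta_j - \omega^\delta_j) + (\omega^\delta_j - \rho^\delta_j)$. The first summand is dominated by $\bar{J}_\delta$, whose $L^1$ bound in Lemma \ref{lem:JVBarBound} generates the $(\|D^{\ast}q\|_{2,1} + \gamma_n)$ factor. The second summand is a centered sum of independent Bernoullis of variance at most $1/4$; Cauchy--Schwarz yields $\mathbb{E}|\sum_j \partial_j g_i(\rho^\delta)(\omega^\delta_j - \rho^\delta_j)| \leq \tfrac12(\sum_j \|\partial_j g_i\|_\infty^2)^{1/2}$, and a second Cauchy--Schwarz over $i$ extracts the $n^{-1/2} T \|D^{\ast}q\|_F$ term.

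The main obstacle is the careful bookkeeping of higher-order derivatives of $g_i$: since $g_i$ itself is a sum of products over $j$, its first and second derivatives expand into third-order derivatives of $q_i^+$ and cross products of the form $\partial_j q_i^+ \cdot \partial_k q_j$ and $\partial_k \partial_j q_i^+ \cdot \partial_l q_j$, each of which must be matched precisely with the norms $\tilde\Gamma_n$, $\theta_n$, $\gamma_n$, and $\|D^{\ast}q\|_F$ in the statement. Once this accounting is done, the exponential growth factors $e^{2T(\|q\|_\infty + \|D^{\ast}q\|_1)}$ emerge automatically from Lemma \ref{lem:JVBarBound} and (\ref{eq:DeterCompare}), and no further ideas are required beyond those already established.
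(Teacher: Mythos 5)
Your overall decomposition is sound — splitting through $\rho^\delta$ rather than (as the paper does) through $\eta$ is a legitimate alternative, and your Mean Value Theorem argument for the deterministic piece $g_i(\rho^\delta)-g_i(\rho)$ together with \eqref{eq:DeterCompare} correctly produces the $n\delta\|q\|_\infty\|D^\ast q\|_1$ contribution to the first term (modulo the notational slip: the quantity you need is $\max_k\sum_i\|\partial_k g_i\|_\infty$, i.e.\ $\|Dg\|_1$, not $\|Dg\|_\infty$). However, the treatment of the stochastic piece $g_i(\eta^\delta)-g_i(\rho^\delta)$ has a genuine gap. You invoke Lemma~\ref{lem:JWBarBoundSecond} with $f=g_i$, but the remainder bound there is $(1+n\|\bar J_\delta\|_2^2)\,\mathcal{T}_n(g_i)$, and $\mathcal{T}_n(g_i)$ contains the quantity $\max_j\sum_k\|\partial_j\partial_k^2 g_i\|_\infty$. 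Since $g_i(x)=\sum_\ell\partial_\ell q_i^+(x)\,q_\ell(x)$, the quantity $\partial_j\partial_k^2 g_i$ expands into terms like $\sum_\ell\partial_j\partial_k^2\partial_\ell q_i^+\cdot q_\ell$, i.e.\ \emph{fourth}-order derivatives of $q$. None of $\gamma_n$, $\tilde\Gamma_n$, $\theta_n$ control these — $\theta_n$ is a third-derivative quantity — so the ``unfolding via the product rule'' you gesture at cannot terminate at the norms that appear in the statement. In addition, $\mathcal{T}_n$ carries a $\sqrt{n(1+\log n)}$ prefactor; this logarithmic contamination does not appear in the bound of Lemma~\ref{lem:D3Part1} (contrast with Lemma~\ref{lem:SecOrderTerm}, which does use Lemma~\ref{lem:JWBarBoundSecond} and accordingly does show a $\sqrt{1+\log n}$ factor).

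The correct tool for the stochastic piece is the \emph{first}-order weak error bound, Lemma~\ref{lem:JWBarBoundCor}, applied to $f_i=g_i=\sum_j d_{ij}$. It requires only first and second derivatives of $f$, hence only second and third derivatives of $q$, exactly matching $\tilde\Gamma_n$, $\gamma_n$, $\theta_n$, and $\|D^\ast q\|_F$ via the bookkeeping in \eqref{eq:DijSumFirstDerivs}--\eqref{eq:DijSumSecondDerivs}, and it carries no logarithm. Once you switch to Lemma~\ref{lem:JWBarBoundCor}, you also need not re-derive the $\eta^\delta\to\omega^\delta\to\rho^\delta$ split or the Cauchy--Schwarz step for the centred Bernoulli sum, since those decompositions are already built into that lemma. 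With that substitution your argument becomes essentially the paper's own, just with the intermediate process chosen as $\rho^\delta$ rather than $\eta$; either choice works because Lemma~\ref{lem:JWBarBoundCor} simultaneously controls both $f_i(\eta)-f_i(\rho)$ and $f_i(\eta^\delta)-f_i(\rho^\delta)$.
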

\begin{proof}
For the sake of brevity, let $ d_{ij}(x) = \partial_{j} q_i^+(x) q_{j}(x) $. 
%By the triangle inequality
%\[
%\left| \sum_{j=1}^{n} d_{ij}(\eta^\delta\circ\chi(s)) - d_{ij}(\rho\circ\chi(s))\right| \leq \left| \sum_{j=1}^{n} g_{ij}(\eta^\delta\circ\chi(s)) - g_{ij}(X\circ\chi(s))\right| + \left| \sum_{j=1}^{n} g_{ij}(X\circ\chi(s)) - g_{ij}(p\circ\chi(s))\right|.
%\]
It is a matter of direct computation to show that
\begin{equation}
\label{eq:DijSumFirstDerivs}
\max_{k=1,\dots,n}\sum_{i,j=1}^{n} \| \partial_{k} d_{ij}\|_{\infty} \leq n \|q \|_{\infty}\tilde{\Gamma}_n + \|Dq^+\|_{1} \|D^{\ast} q\|_{1}
\end{equation}
and
\begin{align}
\sum_{i=1}^{n} \left( \sum_{k=1}^{n} \left( \sum_{j=1}^{n} \| \partial_{k} d_{ij} \|_{\infty} \right)^{2} \right)^{1/2} 
&\lesssim n^{3/2} \|q\|_{\infty} \tilde{\Gamma}_n + \|(Dq^+)(D^{\ast}q)\|_{2,1} \nonumber \\
&\lesssim n^{3/2} \|q\|_{\infty} \tilde{\Gamma}_n + n^{1/2} \|D^{\ast}q\|_F^2,
\label{eq:DijFrobFirstDerivs}
%\partial_{k}^{2} g_{ij}(x) & = \partial_{k}^{2} \partial_{j} q_i^+(x) \cdot Q_{j}(x) + 2 \partial_{k} \partial_{j} q_i^+(x) \cdot \partial_{k} Q_{j}(x) + \partial_{k} \partial_{j} q_i^+(x) \cdot \partial_{k} Q_{j}(x) \\
%\| \partial_{k}^{2} g_{ij} \|_{\infty} & \leq \|q\|_{\infty} \|\partial_{k}^{2} \partial_{j} q_i^+ \|_{\infty} + 2 \|\partial_{k} \partial_{j} q_i^+ \|_{\infty} \|\partial_{k} Q_{j}\|_{\infty} + \|\partial_{j} q_i^+ \|_{\infty} \| \partial_{k}^{2} Q_{j} \|_{\infty} \\
\end{align}
where we once again make use of the fact that $\|A\|_{2,1} \leq n^{1/2} \|A\|_F$ for any $n\times n$ matrix $A$. For the second derivatives, there is
\begin{equation}
\label{eq:DijSumSecondDerivs}
\sum_{i,j,k=1}^{n}\| \partial_{k}^{2} d_{ij} \|_{\infty} \lesssim n \|q\|_{\infty} \theta_n + n \tilde{\Gamma}_n \| D^{\ast}q\|_{1} + \|Dq^+\|_{1} \gamma_n.
\end{equation}
Applying the Mean Value Theorem for $ d_{ij} $ together with Theorem \ref{thm:EulerRate} and (\ref{eq:DijSumFirstDerivs}) gives,
\begin{align}
\label{eq:DijMVT}
\MoveEqLeft\mathbb{E} \int^{T}_{0} n^{-1} \sum_{i=1}^{n}  \left| \sum_{j=1}^{n} d_{ij}(\eta^\delta\circ\chi(s)) - d_{ij}(\eta\circ\chi(s))\right| \mathd s \nonumber\\
&\leq \int^{T}_{0} n^{-1}\sum_{i,j,k=1}^{n} \| \partial_{k} d_{ij}\|_{\infty} \mathbb{E} | \eta^\delta_{k} \circ\chi(s) - \eta_{k}\circ\chi(s)| \mathd s \nonumber\\
&\lesssim \delta T^{2} (n \|q \|_{\infty}\tilde{\Gamma}_n + \|Dq^+\|_{1} \|D^{\ast} q\|_{1}) \|q\|_{\infty} \|D^{\ast} q\|_{1} e^{2T(\|q\|_{\infty} + \|D^{\ast} q\|_{1})}.
\end{align}
Finally, by applying Lemma \ref{lem:JWBarBoundCor} with $ f_{i}(x) = \sum_{j=1}^{n} d_{ij}(x) $ together with (\ref{eq:DijSumFirstDerivs}), (\ref{eq:DijFrobFirstDerivs}), and (\ref{eq:DijSumSecondDerivs}) implies
\begin{multline*}
\mathbb{E} \int^{T}_{0} n^{-1} \sum_{i=1}^{n} \left| \sum_{j=1}^{n} d_{ij}(\eta\circ\chi(s)) - d_{ij}(\rho\circ\chi(s))\right| \mathd s  \\
\lesssim n^{-1}  T^{2}  (n\|q\|_{\infty}\tilde{\Gamma}_n + \|Dq^+\|_1 \|D^{\ast}q\|_1) (\|D^{\ast}q\|_{2,1}+\gamma_n) e^{2T\|D^{\ast}q\|_{1}} \\
+ n^{-1/2} T \|q\|_{\infty} (n\tilde{\Gamma}_n + n^{1/2}\theta_n) + n^{-1/2} T \|D^{\ast}q\|_F,
\end{multline*}
which, together with (\ref{eq:DijMVT}), implies the lemma.
\end{proof}

Next, in Lemma \ref{lem:D3Part2}, we bound the error incurred by replacing the remaining occurrence of $\eta_i^\delta$ by $\rho_i$. For brevity, we let
\[
\mathcal{Q}_i^{\rho,\delta}(s) = \sum_{j=1}^n \partial_j q_i^+(\rho \circ \chi(s)) q_j(\rho \circ \chi(s)) \frac{s - \chi(s)}{\delta},\qquad s \geq 0,
\]
for each $i=1,\dots,n$.
\begin{lemma}
\label{lem:D3Part2}
For any $ T > 0 $,
\begin{multline*}
\mathbb{E} \sup_{t\in[0,T]} \sup_{\phi \in \Phi} \left|\int^{t}_{0} n^{-1} \sum_{i=1}^{n} \phi_{i} (\rho_{i}(s) -\eta^\delta_i(s)) \mathcal{Q}_i^{\rho,\delta}(s) \right| \\
\lesssim T \|Dq^+\|_{\infty} \|q\|_{\infty} (n^{-1/2} + T(\|D^{\ast}q\|_{2,1}+\gamma_n) \\+ \delta T \|q\|_{\infty}\|D^\ast q\|_1 ) e^{2T(\|q\|_{\infty} + \|D^{\ast} q\|_{1})}.
\end{multline*}
\end{lemma}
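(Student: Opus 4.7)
The proposed approach closely mirrors the proof of Lemma \ref{lem:Remainder1}. First, note that since $|s-\chi(s)|/\delta\leq 1$, the deterministic coefficients $\mathcal{Q}_i^{\rho,\delta}(s)$ satisfy $|\mathcal{Q}_i^{\rho,\delta}(s)|\leq \|Dq^+\|_\infty\|q\|_\infty$ uniformly in $i$ and $s$. Pushing the suprema inside the integral via the elementary estimate $\sup_{t\in[0,T]}|\int_0^t F(s)\,\mathd s|\leq \int_0^T |F(s)|\,\mathd s$, the task reduces to controlling
\[
\int_0^T\mathbb{E}\sup_{\phi\in\Phi}\left|n^{-1}\sum_{i=1}^n\phi_i\mathcal{Q}_i^{\rho,\delta}(s)(\rho_i(s)-\eta^\delta_i(s))\right|\mathd s.
\]
I would then split $\rho_i(s)-\eta^\delta_i(s)=(\rho_i(s)-\rho_i^\delta(s))+(\rho_i^\delta(s)-\omega^\delta_i(s))+(\omega^\delta_i(s)-\eta^\delta_i(s))$ using the independent-site approximation $\omega^\delta$ of Appendix \ref{sec:IndepSite}, and handle the three resulting integrands separately.

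The first and third pieces are routine. For the deterministic gap $\rho_i-\rho_i^\delta$, the Euler-type estimate (\ref{eq:DeterCompare}) yields $\sum_{i=1}^n|\rho_i(s)-\rho_i^\delta(s)|\leq 4n\delta s\|q\|_\infty\|D^\ast q\|_1 e^{2s(\|q\|_\infty+\|D^\ast q\|_1)}$; multiplying by the uniform bound on $|\mathcal{Q}_i^{\rho,\delta}(s)|/n$ and integrating over $[0,T]$ supplies the $\delta T\|q\|_\infty\|D^\ast q\|_1$ contribution to the claim. For the basic-coupling gap $\omega^\delta_i-\eta^\delta_i$, the bound $|\omega^\delta_i(s)-\eta^\delta_i(s)|\leq J^\delta_i(T)$ for all $s\leq T$ reduces the integrand to $\|Dq^+\|_\infty\|q\|_\infty\bar{J}_\delta(T)$, so taking expectations and applying the $L^1$ estimate (\ref{eq:JVBarBound1}) from Lemma \ref{lem:JVBarBound} produces the $T(\|D^\ast q\|_{2,1}+\gamma_n)$ contribution (modulo a harmless extra factor of $n^{-1}$ that merely strengthens the bound).

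The main obstacle is the centered middle piece $\rho_i^\delta-\omega^\delta_i$, for which the supremum over $\phi$ must be exploited in a genuine way to extract the $n^{-1/2}$ rate. Mimicking the symmetrization step near the end of the proof of Lemma \ref{lem:Remainder1}, the plan is to introduce an independent copy $\tilde{\omega}^\delta$ of $\omega^\delta$ together with independent Rademacher multipliers $\sigma_1,\dots,\sigma_n$ to obtain
\[
\mathbb{E}\sup_{\phi\in\Phi}\left|n^{-1}\sum_{i=1}^n\phi_i\mathcal{Q}_i^{\rho,\delta}(s)(\rho_i^\delta(s)-\omega^\delta_i(s))\right|\leq 2\,\Rad(\Psi(s;\bv z)),
\]
where $\Psi(s;\bv z)=\{(\phi_i\mathcal{Q}_i^{\rho,\delta}(s))_{i=1}^n:\phi\in\Phi\}$. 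The Ledoux--Talagrand contraction principle (invoked as in \cite[Lemma 26.9]{SSBD2014}), together with the uniform bound on $\mathcal{Q}_i^{\rho,\delta}(s)$, then yields $\Rad(\Psi(s;\bv z))\leq\|Dq^+\|_\infty\|q\|_\infty\Rad(\Phi(\bv z))$, and (\ref{eq:RadBound}) controls the latter by a constant multiple of $n^{-1/2}$. Integrating in $s\in[0,T]$ contributes the $n^{-1/2}$ part of the claim, and assembling the three contributions under the common exponential factor $e^{2T(\|q\|_\infty+\|D^\ast q\|_1)}$ completes the proof.
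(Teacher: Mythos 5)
Your proof is correct, and it reaches the same three ingredients as the paper's: a deterministic or strong-error estimate for the discretisation gap, an $L^1$ coupling bound for the spin/independent-site gap, and a symmetrisation/Rademacher argument for the centered gap. The only difference is the choice of intermediate chain. The paper decomposes $\rho_i - \eta^\delta_i = (\rho_i - \omega_i) + (\omega_i - \eta_i) + (\eta_i - \eta^\delta_i)$, handling the last piece with Theorem \ref{thm:EulerRate}, the middle piece with Lemma \ref{lem:JWBarBound}, and the first via Rademacher complexity of $\Psi(s;\bv z)$. You instead decompose $\rho_i - \eta^\delta_i = (\rho_i - \rho^\delta_i) + (\rho^\delta_i - \omega^\delta_i) + (\omega^\delta_i - \eta^\delta_i)$, replacing Theorem \ref{thm:EulerRate} with its deterministic analogue (\ref{eq:DeterCompare}) and Lemma \ref{lem:JWBarBound} with Lemma \ref{lem:JVBarBound}. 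Because these lemmas are deliberately set up to be mirror images of one another, both decompositions yield the identical constants and the identical final bound; neither route is any harder or more general than the other. Your observation that the $n^{-1}$ factor from $\bar J_\delta$ (resp.\ $\bar J$) can be safely dropped matches the slack already present in the stated inequality.
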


\begin{proof}
First, since $t - \chi(t) \leq \delta$ for any $t > 0$, Theorem \ref{thm:EulerRate} implies
\begin{multline}
 \mathbb{E} \sup_{t\in[0,T]} \sup_{\phi \in \Phi} \left|\int^{t}_{0} n^{-1} \sum_{i=1}^{n} \phi_{i} (\eta_{i}(s) -\eta^\delta_i(s)) \mathcal{Q}_i^{\rho,\delta}(s)  \mathd s \right| \\
%& \leq \mathbb{E} \int^{T}_{0} n^{-1} \sum_{i=1}^{n} | \eta_{i}(s) - \eta^\delta_i(s)| \sum_{j=1}^{n} \|\partial_{j} q_i^+ \|_{\infty} \|q_{j}\|_{\infty} \mathd s \nonumber \\
%& \leq \|q\|_{\infty} \left( \max_{i} \sum_{j=1}^{n} \|\partial_{j} q_i^+ \|_{\infty}  \right) \int^{T}_{0} n^{-1} \mathbb{E} \sum_{i=1}^{n} \left|X_{i}(s) - \eta^\delta_i(s) \right|  \mathd s \\
 \leq 4\delta T^{2}  \|q\|^{2}_{\infty} \|Dq^+\|_{\infty} \|D^{\ast} q\|_{1} e^{2T(\|q\|_{\infty} + \|D^{\ast} q\|_{1})}. \label{eq:D3Part2First}
\end{multline}
Now, we let $\Psi(s;\boldsymbol{z})$ denote the set of vectors \[\Psi(s;\boldsymbol{z}) = \{\psi: \psi_{i} = \phi_{i} \mathcal{Q}_i^{\rho,\delta}(s), \; \phi \in \Phi \}.\]
Once again, using \cite[Lemma 26.9]{SSBD2014} and (\ref{eq:RadBound}), the Rademacher complexity of this set can be bounded by %(pg 381, Understanding machine learning),
\begin{equation}
\Rad(\Psi(s;\boldsymbol{z})) \leq \max_{i} |\mathcal{Q}_i^{\rho,\delta}(s)| \Rad(\Phi(\boldsymbol{z}))
%& \leq \max_{i} \sum_{j=1}^{n} \|\partial_{j} q_i^+\|_{\infty} \| Q_{j}\|_{\infty} \Rad(\Phi)\\
\lesssim n^{-1/2} \|q\|_{\infty} \|Dq^+\|_{\infty}. \label{eq:D3Part2Rad}
\end{equation}
Now, by using the symmetrisation method, comparing $\eta_i$ to $\omega_i$ and then $\omega_i$ to $\rho_i$, we find
\begin{multline}
\mathbb{E} \sup_{t\in[0,T]} \sup_{\phi \in \Phi} \left|\int^{t}_{0} n^{-1} \sum_{i=1}^{n} \phi_{i} (\eta_{i}(s) -\rho_{i}(s)) \mathcal{Q}_i^{\rho,\delta}(s) \mathd s \right|\\
%& \leq \mathbb{E}  \int^{T}_{0}  \sup_{\phi \in \Phi} \left| n^{-1} \sum_{i=1}^{n} \phi_{i} \left[ \sum_{j=1}^{n} \partial_{j} q_i^+(\rho\circ\chi(s)) \delta^{-1} (s-\chi(s)) q_{j}(\rho\circ\chi(s)) \right] \left(p_{i}(s) - W_{i}(s)\right)  \right| \mathd s \\
%& + T \|DC\|_{\infty} \|q\|_{\infty} \mathbb{E} \bar{J}_{W}(T) \\
\leq T \|Dq^+\|_{\infty} \|q\|_{\infty} \mathbb{E}\bar{J}(T) + \int^{T}_{0} \Rad (\Psi(s;\boldsymbol{z})) \mathd s. \label{eq:D3Part2Last}
\end{multline}
The lemma follows by combining (\ref{eq:D3Part2First}), (\ref{eq:D3Part2Rad}), and (\ref{eq:D3Part2Last}). 
%So
%\begin{align*}
%\MoveEqLeft  \mathbb{E} \left[\sup_{t\in[0,T]} \sup_{\phi \in \Phi} \left|\int^{t}_{0} n^{-1} \sum_{i=1}^{n} \phi_{i} (X_{i}(s) -p_{i}(s)) \sum_{j=1}^{n}  \partial_{j} q_i^+(p \circ \chi(s))  h^{-1} (s-\chi(s)) Q_{j} (p\circ \chi(s)) \mathd s \right| \right] \\
%& \leq  T \|DC\|_{\infty} \|q\|_{\infty} \left(\Rad(\Phi) + T(\|D^{\ast}Q\|_{2,1}+\gamma_n) e^{2T\|D^{\ast}Q\|_{1}} \right).
%\end{align*}
\end{proof}

Finally, in Lemma \ref{lem:D3Part3}, we bound the remaining component of $\mathcal{D}_3$. 
\begin{lemma}
For any $T > 0$,
\label{lem:D3Part3}
\begin{multline*}
\sup_{t\in[0,T]} \sup_{\phi \in \Phi} \left|\frac{1}{2} \int^{t}_{0}  n^{-1}\sum_{i=1}^{n} \phi_{i} (1-\rho_{i}(s)) \sum_{j=1}^{n} \partial_{j} q_i^+(\rho(s)) q_{j}(\rho(s)) \mathd s \right. \\
\left.  - \int^{t}_{0} n^{-1} \sum_{i=1}^{n} \phi_{i} (1-\rho_{i}(s)) \mathcal{Q}_i^{\rho,\delta}(s) \mathd s  \right| \\
%& \leq 2hT n^{-1} \sum_{i=1}^{n} \sum_{j=1}^{n} \left( \|q\|_{\infty}^{2} \|\partial_{j} q_i^+ \|_{\infty} + \|q\|^{2}_{\infty} \sum_{k=1}^{n} \| \partial_{k} \partial_{j} q_i^+\|_{\infty} + \|q\|_{\infty} \|\partial_{j} q_i^+\|_{\infty} \sum_{k=1}^{n} \|\partial_{k} Q_{j}\|_{\infty} \right) + h \|q\|_{\infty} \|DC\|_{1} \\
%& \leq 2hT \|q\|_{\infty}^{2} \|DC\|_{1} + 2hT \|q\|^{2}_{\infty} \left(n^{2} \max_{i,j,k} \|\partial_{k} \partial_{j} q_i^+\|_{\infty} \right)  + 2hT \|q\|_{\infty} \|DC\|_{1} \| DQ\|_{1}   + h \|q\|_{\infty} \|DC\|_{1} \\
\lesssim \delta \|q\|_{\infty} (1+T)[n\tilde{\Gamma}_n \|q\|_{\infty} + \|Dq^+\|_{1} (1 + \|Dq\|_{1} + \|q\|_{\infty})].
\end{multline*}
\end{lemma}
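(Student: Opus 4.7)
The plan is a chunkwise analysis on each discretisation interval $[t_k, t_{k+1}]$ that exploits the vanishing-moment identity
\[
\int_{t_k}^{t_{k+1}} \bigl(\tfrac12 - (s-t_k)/\delta\bigr) \mathd s = 0,
\]
which is precisely what makes the midpoint factor $\tfrac12$ the correct replacement for $(s-\chi(s))/\delta$. Since $\rho$ is deterministic, no probabilistic control is required and the $\sup_\phi$ is handled immediately by $|\phi_i| \leq 1$. Write $u_i(s) = 1 - \rho_i(s)$ and $w_i(s) = \sum_j \partial_j q_i^+(\rho(s))\,q_j(\rho(s))$, so that $\mathcal{Q}_i^{\rho,\delta}(s) = w_i(t_k)\,(s-t_k)/\delta$ on $[t_k,t_{k+1}]$. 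Adding and subtracting $\tfrac12 u_i(s) w_i(t_k)$ and then $u_i(t_k) w_i(t_k)\bigl(\tfrac12 - (s-t_k)/\delta\bigr)$, whose chunk integral vanishes, decomposes the chunkwise integrand difference as
\[
\tfrac12\, u_i(s)[w_i(s) - w_i(t_k)] + w_i(t_k)[u_i(s) - u_i(t_k)]\bigl(\tfrac12 - (s-t_k)/\delta\bigr),
\]
a sum of two terms each being a product of two small factors.

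Standard Lipschitz estimates from the ODE~(\ref{ODE:eq1}) with $|\dot\rho_l(s)| \leq 2\|q\|_\infty$ give $|u_i(s) - u_i(t_k)| \leq \delta\|q\|_\infty$ and $|w_i(s) - w_i(t_k)| \leq 2\delta\|q\|_\infty \sum_l \|\partial_l w_i\|_\infty$, while $\int_{t_k}^{t_{k+1}}|\tfrac12 - (s-t_k)/\delta|\mathd s = \delta/4$. Each of the $\lceil T/\delta\rceil$ chunks thus contributes at most a constant times $\delta^2 \|q\|_\infty \bigl[\sum_l \|\partial_l w_i\|_\infty + \|q\|_\infty\,\|w_i\|_\infty\bigr]$ to the absolute integrand for site $i$, and summing over the chunks produces the factor $\delta(T+\delta) \leq \delta(1+T)$.

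Finally, expand $\partial_l w_i = \sum_j \bigl[(\partial_l \partial_j q_i^+)\, q_j + (\partial_j q_i^+)(\partial_l q_j)\bigr]$. For the second-derivative piece, the coarse estimate $\sum_{j,l}\|\partial_l \partial_j q_i^+\|_\infty \leq n^2 \max_{j,l}\|\partial_l\partial_j q_i^+\|_\infty$, combined with the $x_i$-independence of the smooth extension of $q_i^+$ and the definition of $\tilde\Gamma_n$, yields $n^{-1}\sum_i \sum_{j,l}\|\partial_l\partial_j q_i^+\|_\infty \leq n\tilde\Gamma_n$, delivering the target $n\tilde\Gamma_n\|q\|_\infty^2$ contribution. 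For the cross piece, ordering the sum as $\sum_l \sum_j \sum_i$ and applying the definitions of $\|Dq^+\|_1$ and $\|Dq\|_1$ in sequence gives $n^{-1}\sum_{i,j,l}\|\partial_j q_i^+\|_\infty\|\partial_l q_j\|_\infty \leq \|Dq^+\|_1\|Dq\|_1$. The remaining $\|w_i\|_\infty$ contribution is bounded by $n^{-1}\sum_i\|w_i\|_\infty \leq \|q\|_\infty\|Dq^+\|_1$, and collecting these produces the stated bound. The main technical point is exactly this last step: obtaining the tight $n\tilde\Gamma_n$ (sum of maxima) rather than the cruder sum-of-sums estimate, achieved by trading the factor $n^2$ in $\sum_{j,l} \leq n^2 \max_{j,l}$ against the averaging $n^{-1}\sum_i$ present in the lemma.
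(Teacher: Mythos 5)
Your proof is correct and follows essentially the same route as the paper's: the paper packages your chunkwise vanishing-moment decomposition into the auxiliary inequality (\ref{eq:SecondOrderIntegralBound}) and applies it termwise with $g(s)=\phi_i(1-\rho_i(s))$ and $f(s)=\partial_j q_i^+(\rho(s))q_j(\rho(s))$ for each pair $(i,j)$, so you are essentially supplying the proof of (\ref{eq:SecondOrderIntegralBound}) and then carrying out the same bookkeeping of sums over $i,j,l$. One detail you glide past: your third piece $u_i(t_k)\,w_i(t_k)\bigl(\tfrac12-(s-t_k)/\delta\bigr)$ cancels only over \emph{full} chunks, so the final partial chunk when $t\notin\delta\mathbb{Z}$ contributes an extra $O(\delta\,\|q\|_\infty\,\|Dq^+\|_1)$ after averaging over $i$ --- this is precisely the $\delta\|g\|_\infty\|f\|_\infty$ term in (\ref{eq:SecondOrderIntegralBound}) and is absorbed by the $\|Dq^+\|_1$ term inside the brackets of the stated bound, but it should be recorded explicitly.
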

\begin{proof}
One can show that for any real valued differentiable functions $ f $ and $ g $,
\begin{multline}
\label{eq:SecondOrderIntegralBound}
\left| \int^{T}_{0} g(s) \left[ (s-\chi(s))\delta^{-1} f \circ\chi(s) - \frac12 f(s) \right] \mathd s \right| \\ \leq 2 \delta T \left( \|f\|_{\infty} \|g^{\prime}\|_{\infty} + \|g\|_{\infty} \|f^{\prime}\|_{\infty} \right) + \delta \|g\|_{\infty} \|f\|_{\infty}.
\end{multline}
The result follows from applying (\ref{eq:SecondOrderIntegralBound}) with $ g(s) = \phi_{i} (1-\rho_{i}(s)) $ and $ f(s) = \partial_{j} q_i^+(\rho(s)) q_{j}(\rho(s)) $ for each $i,j=1,\dots,n$.
\end{proof}
Theorem \ref{thm:EulerExact} now follows from Gronwall's inequality with respect to (\ref{EA:main}) and (\ref{eq:SEDecomp}), with the additional terms bounded according to Lemmas \ref{lem:MartingaleExactBound}--\ref{lem:D3Part3} and equation (\ref{eq:Remainder2}).% and (\ref{eq:DiscreteDecomp1}).
%, \ref{lem:SecOrderTerm}, \ref{lem:Remainder1}, \ref{lem:Remainder3}, \ref{LemLB:Eq1}, \ref{lem:D1LemmaBound}, \ref{lem:D2LemmaBound}, \ref{lem:D3Part1}, \ref{lem:D3Part2}, 

%\bibliographystyle{acmtrans-ims}
%\bibliography{TauLeap}

\begin{thebibliography}{}
\ifx \url   \undefined \def \url#1{#1}   \fi

\bibitem{AlonsoMcKane:2002}
\textsc{Alonso, D.} \textsc{and} \textsc{McKane, A.} (2002).
\newblock Extinction dynamics in mainland-island metapopulations: a $n$ patch
  stochastic model.
\newblock \emph{Bulletin of Mathematical Biology\/}~\emph{64}, 913--958.

\bibitem{AGK:2011}
\textsc{Anderson, D.}, \textsc{Ganguly, A.}, \textsc{and} \textsc{Kurtz, T.}
  (2011).
\newblock Error analysis of tau-leap simulation methods.
\newblock \emph{The Annals of Applied Probability\/}~\textbf{21},~6,
  2226--2262.
\MR{2895415}

\bibitem{BarbourLuczak:15}
\textsc{Barbour, A.~D.} \textsc{and} \textsc{Luczak, M.} (2015).
\newblock Individual and patch behaviour in structured metapopulation models.
\newblock \emph{Journal of Mathematical Biology\/}~\emph{71},
713--733.
\MR{3382730}

\bibitem{BMP:15}
\textsc{Barbour, A.~D.}, \textsc{McVinish, R.}, \textsc{and} \textsc{Pollett,
  P.~K.} (2015).
\newblock Connecting deterministic and stochastic metapopulation models.
\newblock \emph{Journal of Mathematical Biology\/}~\textbf{71},~6-7,
  1481--1504.
\MR{3419897}

\bibitem{Barnes1986}
\textsc{Barnes, J.} \textsc{and} \textsc{Hut, P.} (1986).
\newblock A hierarchical ${O(N \log N)}$ force-calculation algorithm.
\newblock \emph{Nature\/}~\textbf{324},~6096, 446.

\bibitem{Boucheron2013}
\textsc{Boucheron, S.}, \textsc{Lugosi, G.}, \textsc{and} \textsc{Massart, P.}
  (2013).
\newblock \emph{Concentration inequalities: {A} nonasymptotic theory of
  independence}.
\newblock Oxford University Press.

\bibitem{BTK:2015}
\textsc{Brand, S.}, \textsc{Tidesley, M.~J.}, \textsc{and} \textsc{Keeling,
  M.~J.} (2015).
\newblock Rapid simulation of spatial epidemics: {A} spectral method.
\newblock \emph{Journal of Theoretical Biology\/}~\emph{370},
121--134.
\MR{3319448}

\bibitem{DeMasi:2001}
\textsc{De~Masi, A.} (2003).
\newblock Spin systems with long range interactions.
\newblock In \emph{From Classical to Modern Probability}, {P.~Picco} {and}
  {J.~San~Martin}, Eds. Springer Basel AG, 25--81.

\bibitem{Devroye:2001aa}
\textsc{Devroye, L.} \textsc{and} \textsc{Lugosi, G.} (2001).
\newblock \emph{Combinatorial Methods in Density Estimation}.
\newblock Springer Series in Statistics. Springer-Verlag New York.

\bibitem{Doob:1942}
\textsc{Doob, J.~L.} (1942).
\newblock Topics in the theory of {M}arkoff chains.
\newblock \emph{Transactions of the American Mathematical Society\/}~\emph{52},
  37--64.
\MR{0006633}

\bibitem{Doob:1945}
\textsc{Doob, J.~L.} (1945).
\newblock Markoff chains -- {D}enumerable case.
\newblock \emph{Transactions of the American Mathematical
  Society\/}~\textbf{58},~3, 455--473.
\MR{0013857}

\bibitem{DvZ:01}
\textsc{Dzhaparidze, K.} \textsc{and} \textsc{Van~Zanten, J.~H.} (2001).
\newblock On {B}ernstein-type inequalities for martingales.
\newblock \emph{Stochastic Processes and their Applications\/}~\textbf{93},~1,
  109--117.
\MR{1819486}

\bibitem{EthierKurtz:2005}
\textsc{Ethier, S.} \textsc{and} \textsc{Kurtz, T.} (2005).
\newblock \emph{Markov Processes Characterization and Convergence}.
\newblock John Wiley \& Sons.

\bibitem{frigo1998fftw}
\textsc{Frigo, M.} \textsc{and} \textsc{Johnson, S.~G.} (1998).
\newblock {FFTW: An adaptive software architecture for the FFT}.
\newblock In \emph{Proceedings of the 1998 IEEE International Conference on
  Acoustics, Speech and Signal Processing, ICASSP'98 (Cat. No. 98CH36181)}.
  Vol.~\textbf{3}. IEEE, 1381--1384.

\bibitem{Gillespie:1976}
\textsc{Gillespie, D.} (1976).
\newblock A general method for numerically simulating the stochastic time
  evolution of coupled chemical reactions.
\newblock \emph{Journal of Computational Physics\/}~\textbf{22},~4,
404--434.
\MR{0503370}

\bibitem{Gillespie:1977}
\textsc{Gillespie, D.} (1977).
\newblock Exact stochastic simulation of coupled chemical reactions.
\newblock \emph{Journal of Physical Chemistry\/}~\textbf{81},~25, 2340--2361.

\bibitem{Gillespie:2001}
\textsc{Gillespie, D.} (2001).
\newblock Approximate accelerated stochastic simulation of chemically reacting
  systems.
\newblock \emph{Journal of Chemical Physics\/}~\textbf{115},~4, 1716--1733.

\bibitem{Gray2001}
\textsc{Gray, A.~G.} \textsc{and} \textsc{Moore, A.~W.} (2001).
\newblock `{$N$}-body' problems in statistical learning.
\newblock In \emph{Advances in Neural Information Processing Systems 13},
  {T.~K. Leen}, {T.~G. Dietterich}, {and} {V.~Tresp}, Eds. MIT Press, 521--527.

\bibitem{Greengard1987}
\textsc{Greengard, L.} \textsc{and} \textsc{Rokhlin, V.} (1987).
\newblock A fast algorithm for particle simulations.
\newblock \emph{Journal of Computational Physics\/}~\textbf{73},~2,
325--348.
\MR{0918448}

\bibitem{Greengard1991}
\textsc{Greengard, L.} \textsc{and} \textsc{Strain, J.} (1991).
\newblock The fast {G}auss transform.
\newblock \emph{SIAM Journal on Scientific and Statistical
  Computing\/}~\textbf{12},~1, 79--94.
\MR{1078797}

\bibitem{HamadaTakasu:2019}
\textsc{Hamada, M.} \textsc{and} \textsc{Takasu, F.} (2019).
\newblock Equilibrium properties of the spatial {SIS} model as a point pattern
  dynamics -- {H}ow is infection distributed over space?
\newblock \emph{Journal of Theoretical Biology\/}~\emph{468}, 12--26.
\MR{3914494}

\bibitem{Hanski:1994}
\textsc{Hanski, I.} (1994).
\newblock A practical model of metapopulation dynamics.
\newblock \emph{Journal of Animal Ecology\/}~\textbf{63},~1, 151--162.

\bibitem{HMP:18}
\textsc{Hodgkinson, L.}, \textsc{McVinish, R.}, \textsc{and} \textsc{Pollett,
  P.~K.} (2018).
\newblock Normal approximations for discrete-time occupancy processes.
\newblock \emph{arXiv preprint arXiv:1801.00542\/}.

\bibitem{klebaner2012introduction}
\textsc{Klebaner, F.} (2012).
\newblock \emph{Introduction to stochastic calculus with applications}.
\newblock World Scientific Publishing Company.

\bibitem{Leonard:90}
\textsc{L\'{e}onard, C.} (1990).
\newblock Some epidemic systems are long range interacting particle systems.
\newblock In \emph{Stochastic processes in epidemic theory}, {C.~Picard}, Ed.
  Springer, New York, 170--183.

\bibitem{Liggett:1999}
\textsc{Liggett, T.} (1999).
\newblock \emph{Stochastic interacting systems: contact, voter and exclusion
  processes}. Grundlehren der mathematischen Wissenschaften, Vol. \textbf{324}.
\newblock Springer-Verlag.

\bibitem{Liggett:2005}
\textsc{Liggett, T.} (2005).
\newblock \emph{Interacting particle systems}.  Vol.~\textbf{276}.
\newblock Springer Science \& Business Media.

\bibitem{Liggett:2010}
\textsc{Liggett, T.} (2010).
\newblock Stochastic models for large interacting systems and related
  correlation inequalities.
\newblock \emph{Proceedings of the National Academy of
  Sciences\/}~\textbf{107},~38, 16413--16419.
\MR{2726545}

\bibitem{mcvinish2014limiting}
\textsc{McVinish, R.} \textsc{and} \textsc{Pollett, P.~K.} (2014).
\newblock The limiting behaviour of {Hanski's} incidence function
  metapopulation model.
\newblock \emph{Journal of Applied Probability\/}~\textbf{51},~2,
297--316.
\MR{3217768}

\bibitem{mourrat2017convergence}
\textsc{Mourrat, J.-C.} \textsc{and} \textsc{Weber, H.} (2017).
\newblock Convergence of the two-dimensional dynamic {Ising-Kac} model to
  $\phi_2^4$.
\newblock \emph{Communications on Pure and Applied
  Mathematics\/}~\textbf{70},~4, 717--812.
\MR{3628883}

\bibitem{SSBD2014}
\textsc{Shalev-Shwartz, S.} \textsc{and} \textsc{Ben-David, S.} (2014).
\newblock \emph{Understanding machine learning: from theory to algorithms}.
\newblock Cambridge University Press.

\bibitem{Sznitman:91}
\textsc{Sznitman, A.-S.} (1991).
\newblock Topics in propagation of chaos.
\newblock In \emph{Ecole d'Et\'{e} de Probabilit\'{e}s de Saint-Flour XIX --
  1989}, {P.-L. Hennequin}, Ed. Springer, Berlin Heidelberg, 165--251.
\MR{1108185}

\bibitem{vitushkin}
\textsc{Vitushkin, A.~G.} (1961).
\newblock \emph{Theory of the Transmission and Processing of Information}.
\newblock Pergamon Press.

\bibitem{Yang2003}
\textsc{Yang, C.}, \textsc{Duraiswami, R.}, \textsc{Gumerov, N.}, \textsc{and}
  \textsc{Davis, L.} (2003).
\newblock Improved fast {G}auss transform and efficient kernel density
  estimation.
\newblock In \emph{Proceedings Ninth IEEE International Conference on Computer
  Vision}. IEEE, 464--471.

\end{thebibliography}

\end{document}